\documentclass[a4paper,11pt]{amsart}
 
\textheight 220mm
\textwidth 150mm
\hoffset -16mm

\usepackage{amsmath,amssymb,amsthm}
\input xy
\xyoption{all}
\newtheorem{theorem}{Theorem}[section]

\newtheorem{Theorem}[theorem]{Theorem}
\newtheorem{cor}[theorem]{Corollary}
\newtheorem{lemm}[theorem]{Lemma}
\newtheorem{prop}[theorem]{Proposition}

\newtheorem{conj}[theorem]{Conjecture}
\theoremstyle{definition}
\newtheorem{defi}[theorem]{Definition}
\newtheorem{remk}[theorem]{Remark}
\newtheorem{exam}[theorem]{Example}
\newcommand{\DD}{{\mathcal D}}
\newcommand{\FF}{{\mathcal F}}
\newcommand{\XX}{{\mathcal X}}

\newcommand{\F}{\mathbf{F}}

\newcommand{\T}{\mathbf{T}}
\newcommand{\Q}{\mathbf{Q}}
\newcommand{\RT}{\mathbb{R}\mathbf{T}}
\newcommand{\LT}{\mathbb{L}\mathbf{T}}
\newcommand{\La}{\Lambda}

\newcommand{\add}{\operatorname{add}\nolimits}
\newcommand{\proj}{\operatorname{proj}\nolimits}

\newcommand{\gl}{\operatorname{gl.dim}\nolimits}

\newcommand{\Hom}{\operatorname{Hom}\nolimits}

\newcommand{\Ext}{\operatorname{Ext}\nolimits}
\newcommand{\Tor}{\operatorname{Tor}\nolimits}

\newcommand{\RHom}{\mathbb{R}\strut\kern-.2em\operatorname{Hom}\nolimits}
\newcommand{\Lotimes}{\mathop{\stackrel{\mathbb{L}}{\otimes}}\nolimits}
\renewcommand{\mod}{\operatorname{mod}\nolimits}
\def\dim{\mathop{\mathrm{dim}}\nolimits}

\def\Ker{\mathop{\mathrm{Ker}}\nolimits}

\def\Hom{\mathop{\mathrm{Hom}}\nolimits}
\def\End{\mathop{\mathrm{End}}\nolimits}
\def\Ext{\mathop{\mathrm{Ext}}\nolimits}
\def\Tor{\mathop{\mathrm{Tor}}\nolimits}

\def\mod{\mathop{\mathrm{mod}}\nolimits}

\def\add{\mathop{\mathrm{add}}\nolimits}

\def\dd{\underline{\dim}}
\def\bx{x}
\begin{document}

\title{A Gabriel-type theorem for cluster tilting}
\author{Yuya Mizuno}
\thanks{The author is supported by Grant-in-Aid
for JSPS Fellowships No.23.5593.}
\address{Graduate School of Mathematics\\ Nagoya University\\ Frocho\\ Chikusaku\\ Nagoya\\ 464-8602\\ Japan}
\email{yuya.mizuno@math.nagoya-u.ac.jp}
\subjclass[2010]{16G20,\ 16E35}
\begin{abstract}
We study the relationship between $n$-cluster tilting modules over $n$ representation finite algebras and the Euler forms. 
We show that the dimension vectors of cluster-indecomposable modules give the roots of the Euler form.
Moreover, we show that cluster-indecomposable modules are uniquely determined by their dimension vectors. This is a generalization of Gabriel's theorem by cluster tilting theory. 
We call the above roots cluster-roots and investigate their properties. 
Furthermore, we provide the description of quivers with relations of $n$-APR tilts. Using this, 
we provide a generalization of BGP reflection functors. 

\end{abstract}
\maketitle
\tableofcontents
\section{Introduction}
Global dimension is one of the important invariants of algebras and it allows us to measure the homological complexity of algebras.
Algebras with global dimension 0 are semisimple. The next fundamental class consists of algebras of global dimension at most 1, which are characterized as path algebras  up to Morita equivalence.
With regard to this class, Gabriel gave the following famous theorem in 1972.
\begin{Theorem}[Gabriel]\label{Gab thm}
Let $K$ be an algebraically closed field, $Q$ be a connected Dynkin quiver and $q_Q$ be the Euler form (Definition \ref{C matrix}) of the path algebra $KQ$. 
\begin{itemize}
\item[(1)]
For any indecomposable $KQ$-module $X$, 
the dimension vector ${\dd} X$ is a positive root of $q_Q$.
\item[(2)]
The map $X \mapsto {\dd} X$ gives a bijection between the isomorphism classes of indecomposable $KQ$-modules and the positive roots of $q_Q$.
\end{itemize}
\end{Theorem}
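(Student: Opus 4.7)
The plan is to adapt the classical Bernstein--Gelfand--Ponomarev proof via reflection functors, which is philosophically close to the $n$-APR tilting framework that the paper itself generalizes.

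The starting point is the Tits form identity: for any finite-dimensional $KQ$-module $X$,
\[
q_Q(\dd X) = \dim_K \End(X) - \dim_K \Ext^1(X, X).
\]
This follows from the fact that $KQ$ has global dimension at most one, so any module has a two-step projective resolution, and a direct Euler-characteristic computation after applying $\Hom(-, X)$. On the lattice side, since $Q$ is Dynkin, $q_Q$ is positive definite, and its roots $\{\alpha \in \Z^n : q_Q(\alpha) = 1\}$ form a finite root system whose Weyl group $W$ is generated by simple reflections $s_i$; moreover every positive root is $W$-conjugate to a simple root.

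For the module-theoretic side, I would construct, for each sink $i$ of $Q$, the reflection functor $S_i^+ : \mod KQ \to \mod KQ'$, where $Q'$ is obtained from $Q$ by reversing all arrows at $i$. This functor induces an equivalence $(\mod KQ)/\add S(i) \simeq (\mod KQ')/\add S(i)$, acts on dimension vectors by the simple reflection $s_i$, and preserves indecomposability away from $S(i)$. For part~(1), given an indecomposable $X$, one chooses an admissible sequence of sinks $i_1, i_2, \ldots$ realizing a reduced expression of the longest element $w_0 \in W$ and applies $S_{i_1}^+, S_{i_2}^+, \ldots$ successively. At some step $X$ must become the simple at the current sink, since otherwise the positive vectors $s_{i_k}\cdots s_{i_1}(\dd X)$ would remain positive past the length of $w_0$, impossible in a finite root system. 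Hence $\dd X$ lies in the $W$-orbit of a simple root, so is a positive root. For the surjective part of~(2), reverse the process: given a positive root $\alpha = w(e_j)$, apply adjoint reflection functors $S_{i_k}^-$ to the simple $S(j)$ to produce an indecomposable of dimension vector $\alpha$. For injectivity, if $X, Y$ are indecomposable with $\dd X = \dd Y$, apply the same admissible sequence; at each step both modules either survive as indecomposables (with equal dimension vectors) or both collapse to the simple at the current sink, and in the latter case one unravels the functors to conclude $X \cong Y$.

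The main obstacle lies in controlling the reflection process in part~(1): one must justify that for any indecomposable $X$, some finite iterate of reflection functors actually produces a simple module, and this rests crucially on the finiteness of the root system, hence on positive definiteness of $q_Q$. Equivalently, the Coxeter functor $\Phi = S_{i_n}^+ \cdots S_{i_1}^+$ must be nilpotent on non-preprojective indecomposables of $\mod KQ$, which for Dynkin type forces every indecomposable to be preprojective and hence reachable from a simple via inverse reflections.
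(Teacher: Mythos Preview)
The paper does not prove this statement: Theorem~\ref{Gab thm} is quoted in the introduction as Gabriel's classical result, attributed to \cite{G} (and to \cite{BGP} for the reflection-functor proof), and serves purely as motivation for the paper's generalization. So there is no ``paper's own proof'' to compare against.

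That said, your BGP-style argument is correct and is precisely the $n=1$ shadow of the machinery the paper develops. Your reflection functors $S_i^+$ are the $1$-APR tilting functors $\mathbf{T}_i^+$ of Section~\ref{Reflection transformations}; your observation that $S_i^+$ acts on dimension vectors by $s_i$ is the $n=1$ case of Proposition~\ref{kakan1}; your termination argument (some iterate of reflections sends $X$ to a simple) is Corollary~\ref{description}; and your injectivity argument is exactly the proof of Theorem~\ref{uniqueness}. So the paper's Theorems~\ref{result1} and~\ref{uniqueness}, specialized to $n=1$, recover part~(1) and the injectivity half of part~(2) by essentially your method.

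One point worth noting: the paper does \emph{not} re-derive the surjectivity in~(2). For general $n$ the map from cluster-indecomposables to positive roots is only an injection (Theorem~\ref{main result}), and the question of which roots are hit is the subject of Section~\ref{criterion} and Conjecture~\ref{conjecture}. The assertion after Theorem~\ref{main result} that for $n=1$ cluster-roots coincide with all positive roots is taken for granted from the classical theory rather than proved internally. Your surjectivity argument (write $\alpha = w(e_j)$ and apply $S^-$ functors) is the standard one and is fine, though you should be slightly more careful that the intermediate vectors stay positive so that the $S^-$ functors genuinely produce nonzero indecomposables at each step; this is where positive definiteness and the combinatorics of the finite root system enter.
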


This important theorem was investigated and developed by many authors. One of the generalizations was given by \cite{DF,N,DR,Ka}, where path algebras of non-Dynkin quivers were studied, and another was given by \cite{B2,D}, where representation-directed algebras of global dimension 2 were studied. 
We also point out that the notion of reflection functors \cite{BGP} was inspired by the theorem and it led to tilting theory \cite{APR,B1,BB,HR1,M,H,Ric}, which is an important tool in the study of many areas of mathematics. Thus, Gabriel's theorem plays a significant role in the representation theory of algebras.

The aim of this paper is to generalize Gabriel's theorem from the viewpoint of higher dimensional Auslander-Reiten theory, which was introduced by Iyama \cite{I1,I2,I3} and it 
has been developed by several authors \cite{EH,HI1,HI2,HIO,HZ,IO1,IO2}. It is also closely related to the recent studies of cluster tilting theory (see for example \cite{Ke1,Ke2,Re}). 
In higher dimensional Auslander-Reiten theory, \emph{$n$-representation-finite algebras} (Definition \ref{cluster-indec}) are fundamental objects. Recall that we call an algebra $n$-representation-finite if its global dimension is at most $n$ and it has an $n$-cluster tilting module, and we call a $\Lambda$-module \emph{cluster-indecomposable} if it is isomorphic to an indecomposable direct summand of an $n$-cluster tilting module. Note that 1-representation-finite algebras are exactly path algebras of Dynkin quivers and any indecomposable module is cluster-indecomposable in this case
(Remark \ref{1-rep-fin}). 

Let us investigate path algebras over a Dynkin quiver in the context of Auslander-Reiten theory.
For a Dynkin quiver $Q$, we have the following, where $\tau$ and $\tau^-$ are Auslander-Reiten translations.
\begin{itemize}
\item Any indecomposable $KQ$-module $X$ is isomorphic to $\tau^iI$ for an integer $i\geq0$ and an indecomposable injective $KQ$-module $I$.
\item Any indecomposable $KQ$-module $X$ is isomorphic to $\tau^{-i}P$ for an integer $i\geq0$ and an indecomposable projective $KQ$-module $P$.
\end{itemize}

Then, for an $n$-representation-finite algebra $\Lambda$, we have the following (Proposition \ref{cluster proper}), where $\tau_n$ and $\tau_n^-$ are $n$-Auslander-Reiten translations.
\begin{itemize}
\item Any cluster-indecomposable $\Lambda$-module $X$ is isomorphic to $\tau_n^iI$ for an integer $i\geq0$ and an indecomposable injective
 $\Lambda$-module $I$.
\item Any cluster-indecomposable $\Lambda$-module $X$ is isomorphic to $\tau_n^{-i}P$ for an integer $i\geq0$ and an indecomposable projective $\Lambda$-module $P$.
\end{itemize}
In this paper, we focus on this property and give a Gabriel-type theorem for algebras of   global dimension $n$.

Our main results are given by the following theorem.
\begin{Theorem}[Theorem \ref{result1}, \ref{uniqueness}]\label{main result}
Let $\Lambda$ be an $n$-representation-finite algebra and $q_\Lambda$ be the Euler form of $\Lambda$. 
\begin{itemize}
\item[(1)]
For any cluster-indecomposable $\Lambda$-module $X$, 
the dimension vector ${\dd} X$ is a positive root of $q_\Lambda$.
\item[(2)]
The map $X \mapsto {\dd} X$ gives an injection between the isomorphism classes of cluster-indecomposable $\La$-modules and the positive roots of $q_\La$.
\end{itemize}
\end{Theorem}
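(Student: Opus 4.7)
The approach I would take adapts the classical Coxeter-transformation proof of Gabriel's theorem to the higher setting. I would first set up two ingredients: (a) a higher Coxeter transformation $\Phi \in \mathrm{GL}(K_0(\mod \Lambda))$ defined from the Cartan matrix of $\Lambda$, characterized by $\dd\, \tau_n M = \Phi(\dd\, M)$ for every non-projective cluster-indecomposable $M$, obtained by applying the Nakayama functor to minimal projective resolutions and using $\gl \Lambda \leq n$; and (b) the fact that $\Phi$ preserves $q_\Lambda$, which follows from the formula for the Euler form in terms of the Cartan matrix together with Auslander-Reiten duality at the level of $K_0$.

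For part (1), given a cluster-indecomposable $X$, I would invoke Proposition \ref{cluster proper} to write $X \cong \tau_n^i I$ for some indecomposable injective $I$ and $i \geq 0$. Iterating (a) gives $\dd\, X = \Phi^i \dd\, I$. Since $\Ext^k_\Lambda(-, I) = 0$ for all $k > 0$, one computes
$$q_\Lambda(\dd\, I) = \langle I, I\rangle = \dim_K \End_\Lambda(I) = 1,$$
using indecomposability of $I$ over an algebraically closed base field. Combining with (b), $q_\Lambda(\dd\, X) = q_\Lambda(\Phi^i \dd\, I) = 1$, and $\dd\, X \geq 0$ since $X$ is a module. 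Hence $\dd\, X$ is a positive root of $q_\Lambda$.

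For part (2), suppose $\dd\, X = \dd\, Y$ with $X = \tau_n^i I$ and $Y = \tau_n^j J$, where $I, J$ are indecomposable injectives and WLOG $i \geq j$. Invertibility of $\Phi$ reduces the problem to the following sub-claim: a cluster-indecomposable $Z$ with $\dd\, Z = \dd\, J$ for some indecomposable injective $J$ must satisfy $Z \cong J$. The Euler pairing simplifies to $\dim \Hom_\Lambda(Z, J) = \langle Z, J\rangle = \langle J, J\rangle = 1$, providing a nonzero morphism $f\colon Z \to J$. Writing $Z = \tau_n^k I'$, the case $k = 0$ reduces to showing that distinct indecomposable injectives have distinct dim vectors, which follows from the invertibility of the Cartan matrix. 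The case $k > 0$ must be ruled out, which is equivalent to showing that the $\tau_n$-orbits of distinct indecomposable injectives have pairwise disjoint images in $K_0$.

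The main obstacle is the exclusion of the case $k > 0$ in the sub-claim of part (2): showing that iterated application of $\Phi$ to the dim vector of one injective cannot land on the dim vector of another. Part (1) is essentially formal once (a) and (b) are established, but for part (2) a more delicate combinatorial step is needed. I would plausibly handle it by invoking the $n$-Auslander-Reiten duality $\Ext^n_\Lambda(X, Y) \cong D\,\overline{\Hom}_\Lambda(Y, \tau_n X)$ together with the Ext-vanishing $\Ext^k(X, Y) = 0$ for $0 < k < n$ among cluster-indecomposable summands of the basic $n$-cluster tilting module, combined with an induction along $\tau_n$-orbits.
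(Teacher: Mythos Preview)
Your argument for part~(1) is correct and coincides with the paper's: write $X\cong\tau_n^i I$ via Proposition~\ref{cluster proper}, use $\dd\tau_n=\Phi$ on non-projectives, and the $\Phi$-invariance of $q_\Lambda$ reduces to $q_\Lambda(\dd I)=\dim\End_\Lambda(I)=1$.

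For part~(2), your reduction to the sub-claim is valid, and the case $k=0$ is fine. Your proposed tools actually \emph{do} settle the case $k>0$ when $n$ is odd: from $\Hom_\Lambda(J,Z)=0$ (Proposition~\ref{cluster proper}(4)) and $\Ext^m_\Lambda(J,Z)=0$ for $0<m<n$ one gets $1=\langle J,Z\rangle=(-1)^n\dim\Ext^n_\Lambda(J,Z)$, a contradiction. But for $n$ even this only yields $\dim\Ext^n_\Lambda(J,Z)=1$, which is consistent; neither $n$-AR duality nor the directedness in Proposition~\ref{cluster proper}(4) produces a contradiction, and an induction along $\tau_n$-orbits stalls because $\tau_n^-J=0$ for injective $J$, while passing to $\Phi^{-1}\dd J=\dd P_j$ merely reproduces the dual problem on the projective side. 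So there is a genuine gap for even $n$.

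The paper closes this gap by a different mechanism. Rather than working with $\Phi$ and $\tau_n$ alone, it factors the Coxeter functor $\mathbf{C}^+\cong\tau_n$ into $l$ reflection functors $\T_k^+=\Hom_\Lambda(T_k,-)$ coming from $n$-APR tilts (Section~\ref{determine}), each of which is an equivalence on cluster-indecomposables away from a single simple projective and is compatible with a linear reflection $t_k$ on $K_0$ (Corollary~\ref{kakan2}). One then applies these functors to $X$ and $Y$ in parallel: at the first step where the image of $X$ becomes a simple module $S_i$, its dimension vector is $\mathbf{e}_i$, hence so is that of the image of $Y$; but the only module with dimension vector $\mathbf{e}_i$ is $S_i$, so the images agree, and the equivalences invert to give $X\cong Y$. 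The point is that the single-step reflections detect the exact moment a module becomes simple, whereas applying the full $\Phi$ overshoots the projective/injective boundary and loses this information---precisely the difficulty your approach runs into for even $n$.
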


We call the above positive roots \emph{cluster-roots} (Definition \ref{cluster-root}).
Thus, cluster-roots correspond bijectively to cluster-indecomposable modules, 
which is an analogue of Theorem \ref{Gab thm}. 
In fact, when $n=1$, cluster-roots and cluster-indecomposable modules coincide with positive roots and indecomposable modules, respectively.
On the other hand, the above map is not necessarily surjective if $n>1$.
Therefore, our next aim is to characterize cluster-roots. 
We give a complete description of cluster-roots by \emph{reflection transformations} (Definition \ref{reflection}), which are linear transformations of the Grothendieck group.

\begin{Theorem}[Theorem \ref{form}]
Let $\Lambda$ be an $n$-representation-finite algebra and $Q$ be the quiver of $\Lambda$. Assume that $Q$ has an admissible ordering $(1,\ldots,l)$ (Definition \ref{admissible}). We denote by $t_i$ the reflection transformation associated with vertex $i$ and ${\bf c}:=t_l\ldots t_1$.
\begin{itemize}
\item[(1)]Any cluster-root can be written as
${\bf c}^{-u}{\bf p}_i$ for integers $u \geq 0$ and $i\in Q_0$, where ${\bf p}_i := t_1^{-1}\ldots t_{i-1}^{-1}{\bf e}_i$.
\item[(2)]Any cluster-root can be written as
${\bf c}^{v}{\bf q}_j$ for integers $v \geq 0$ and $j\in Q_0$, where ${\bf q}_j := t_l\ldots t_{j+1}{\bf e}_j$.
\end{itemize}
\end{Theorem}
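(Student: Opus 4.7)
My plan is to combine the orbit structure of cluster-indecomposable modules recorded in Proposition \ref{cluster proper} with an identification of $\mathbf{c}$ as the $n$-Auslander--Reiten translate at the level of dimension vectors. Concretely, I would reduce the theorem to two claims: (i) $\mathbf{p}_i=\dd P_i$ and $\mathbf{q}_j=\dd I_j$, where $P_i$ and $I_j$ denote the indecomposable projective and injective at vertices $i$ and $j$; and (ii) on non-projective (resp. non-injective) cluster-indecomposable modules, $\mathbf{c}^{-1}$ implements $\tau_n$ (resp. $\mathbf{c}$ implements $\tau_n^{-1}$) at the level of dimension vectors. Granting both, part (1) will follow by writing $X=\tau_n^{-u}P_i$ via Proposition \ref{cluster proper} and computing $\mathbf{c}^{-u}\mathbf{p}_i=\mathbf{c}^{-u}\dd P_i=\dd(\tau_n^{-u}P_i)=\dd X$; part (2) is dual, via $X=\tau_n^{v}I_j$.

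For claim (i), I would exploit the triangular structure forced by admissibility: $P_i$ has composition factors concentrated at vertices in a controlled subset of $\{1,\ldots,l\}$ determined by the order, and symmetrically for $I_j$. Definition \ref{reflection} is designed so that $t_k^{-1}$, applied to a vector supported away from $k$, appends the $k$-th layer of the projective filtration. Telescoping this across $k=1,\ldots,i-1$ should yield $t_1^{-1}\cdots t_{i-1}^{-1}\mathbf{e}_i=\dd P_i$, and the dual computation yields $\mathbf{q}_j=\dd I_j$. I expect this step to be a direct induction once the explicit form of the reflection transformation is unpacked.

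The hard part will be claim (ii). My approach is to establish a local identity of the form $t_k(\dd Y)=\dd(\mu_k Y)$, where $\mu_k$ denotes the $n$-APR tilt at vertex $k$, using the explicit description of the quiver with relations of $n$-APR tilts developed earlier in the paper; this identity is only meaningful when $k$ is $n$-APR tiltable at the moment one applies $t_k$. The point of the admissible ordering is precisely that after performing $t_1,\ldots,t_{k-1}$ in order, the vertex $k$ becomes tiltable in the resulting algebra, which is the $n$-dimensional analogue of the classical sink condition underlying BGP. The main obstacle is to verify inductively that this tiltability condition propagates correctly through the composition $t_l\cdots t_1$; once that is in place, telescoping identifies $\mathbf{c}$ with the action of $\tau_n^{-1}$ on dimension vectors of non-injective cluster-indecomposables, and both parts of the theorem drop out via the reduction described above.
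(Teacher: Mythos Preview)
Your strategy is correct and matches the paper's: reduce to the local compatibility $t_k(\dd Y)=\dd(\T_k^+Y)$ between reflection transformations and reflection functors (the paper's Corollary~\ref{kakan2}), then telescope along the admissible ordering. Two organizational points are worth noting.

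First, your separation into claims (i) and (ii) is slightly redundant. The paper does not argue $\mathbf{p}_i=\dd P_i$ by a direct ``triangular'' computation; instead it proves the functorial statement $P_i\cong\T_1^-\cdots\T_{i-1}^-(S_i)$ (Proposition~\ref{proj form}) and then reads off $\mathbf{p}_i=\dd P_i$ by applying the same local identity you need for (ii). Your proposed combinatorial attack on (i) would have to track the Euler forms of the successive algebras $\Lambda^k$ (since $t_k$ is defined relative to $\Lambda^k$, not $\Lambda$), which is exactly what the functorial route already packages for you. Once (ii) is in hand, (i) is free.

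Second, for the local identity itself you propose to use the explicit quiver-with-relations description of $n$-APR tilts. The paper instead proves it homologically (Proposition~\ref{kakan1}): one computes the $k$-th coordinate of $t_k(\dd X)$ via the Euler form, rewrites $\langle\dd X,\mathbf{e}_k\rangle=(-1)^n\langle\dd(\tau_n^-P_k),\dd X\rangle$ using Lemma~\ref{Euler form} and Lemma~\ref{tau=c}, and then invokes $X\in\FF_0(T_k)$ to collapse the alternating sum of $\Ext$-dimensions to $\dim\Hom_\Lambda(\tau_n^-P_k,X)$. This is cleaner and works uniformly in $n$; the quiver description in Section~\ref{quivers} is logically independent and appears only later. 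Either route would succeed, but the Euler-form argument avoids case analysis on the shape of the tilted quiver.
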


Thus cluster-roots can be calculated by a method of linear algebra. 
Furthermore, we give the following terminology to characterize cluster-roots.

\begin{defi}[Definition \ref{C-positive def}]
We call $x \in \mathbb{Z}^l$ \emph{$\Phi$-sign-coherent} if $\Phi^{m}(x)\in \mathbb{Z}^l_{\geq0}$ or $\Phi^{m}(x)\in \mathbb{Z}^l_{\leq0}$ for any $m \in \mathbb{Z}$ and 
call $x \in \mathbb{Z}^l$ \emph{$\Phi$-positive} if $\Phi^{m}(x)\in \mathbb{Z}^l_{\geq0}$ for any $m \in \mathbb{Z}$, where $\Phi$ is the Coxeter transformation (Definition \ref{C matrix}).  
\end{defi}

Then we have the following result.

\begin{Theorem}[Theorem \ref{C-positive}]
Cluster-roots are $\Phi$-sign-coherent. 
Moreover, if $n$ is even, then cluster-roots are $\Phi$-positive.
\end{Theorem}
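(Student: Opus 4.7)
The plan is to interpret the Coxeter transformation $\Phi$ as the $K_0$-level action of the shifted Nakayama functor $\nu_n = \nu[-n]$, and then to trace the sequence $\Phi^m \dd X$ for a cluster-indecomposable $X$ by following its $\tau_n$-orbit, carefully recording the sign acquired each time the iteration crosses a projective-to-injective transition.

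First I would establish two dimension-vector identities. (a) For a cluster-indecomposable non-projective $\Lambda$-module $Y$, the $n$-representation-finiteness of $\Lambda$ forces $\Ext^i(Y,\Lambda)=0$ for $0<i<n$, so $\mathbb{R}\Hom(Y,\Lambda)\simeq \Ext^n(Y,\Lambda)[-n]$; dualizing gives $\nu Y \simeq \tau_n Y[n]$ in $D^b(\mod\Lambda)$ and hence $\nu_n Y \simeq \tau_n Y$, which at $K_0$ reads $\Phi\dd Y = \dd(\tau_n Y)$. (b) For an indecomposable projective $P_i$, $\nu P_i = I_i$ sits in degree $0$, so $\nu_n P_i = I_i[-n]$ contributes the sign $(-1)^n$ at $K_0$, giving $\Phi\dd P_i = (-1)^n \dd I_i$. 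The dual identities $\Phi^{-1}\dd Y = \dd(\tau_n^{-1}Y)$ for cluster-indecomposable non-injective $Y$ and $\Phi^{-1}\dd I_j = (-1)^n \dd P_j$ handle the backward direction.

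Iterating these identities, for any cluster-indecomposable $X$ the sequence $\Phi^m\dd X$ for $m\geq 0$ walks along the $\tau_n$-orbit of $X$ via (a) until it reaches the terminal projective $P$ of that orbit (Proposition \ref{cluster proper}); the next step, via (b), transitions to $(-1)^n \dd I'$ with $I' = \nu P$ the starting injective of a new orbit, and iteration continues along that orbit, and so on. By induction on the number $s$ of Nakayama transitions crossed (and symmetrically for $m<0$), $\Phi^m\dd X = (-1)^{sn}\dd Z_m$ for some cluster-indecomposable $Z_m$. This yields $\Phi$-sign-coherence for every $n$; when $n$ is even, $(-1)^{sn}=1$ for all $s$, so $\Phi^m\dd X$ is always non-negative, giving $\Phi$-positivity. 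The principal obstacle is the first step: pinning down conventions identifying $\Phi$ with $\nu_n$ on $K_0$ and verifying the derived-category isomorphism $\nu_n Y \simeq \tau_n Y$ for cluster-indecomposable $Y$; once these are in hand, the remainder is a finite combinatorial bookkeeping organizing the $\Phi$-dynamics via $\tau_n$-orbits joined by Nakayama transitions.
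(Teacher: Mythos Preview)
Your proposal is correct and follows essentially the same route as the paper: the two key identities $\Phi\,\dd Y=\dd(\tau_nY)$ for non-projective cluster-indecomposable $Y$ and $\Phi\,\dd P_i=(-1)^n\dd I_i$ are exactly Lemma~\ref{tau=c} and Lemma~\ref{Euler form}, and the iteration along $\tau_n$-orbits with sign bookkeeping at the projective-to-injective transitions is precisely what the paper's (rather terse) proof intends. Your version is simply more explicit about the induction on the number of transitions; the paper leaves this implicit, relying on the reader to observe that $\tau_nY$ and $I_i$ are again cluster-indecomposable (Proposition~\ref{cluster proper}), so one may iterate.
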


We consider the converse for $n=2$ and pose the following conjecture. 

\begin{conj}[Conjecture \ref{conjecture}]\label{conj0}
Let $\Lambda$ be a $2$-representation-finite algebra. 
Then $\Phi$-positive roots are cluster-roots.
This is equivalent, by Theorem \ref{main result}, to that $\Phi$-positive roots correspond bijectively to the isomorphism classes of cluster-indecomposable modules.
\end{conj}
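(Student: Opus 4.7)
The plan is to establish the reverse inclusion to Theorem \ref{C-positive}: every $\Phi$-positive root must be realized as a cluster-root, i.e. as $\dd X$ for some cluster-indecomposable module $X$. Combined with Theorem \ref{main result}(2) and Theorem \ref{C-positive} (for $n=2$ even), this would give the bijection asserted in the conjecture.

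The first step is to invoke Theorem \ref{form}(1), which parametrizes the cluster-roots explicitly as $\mathbf{c}^{-u}\mathbf{p}_i$ for $u\ge 0$ and $i\in Q_0$. To make this usable against the hypothesis, I would identify the Coxeter element $\mathbf{c}=t_l\cdots t_1$ built from the reflection transformations with the Coxeter transformation $\Phi$ acting on the Grothendieck group. In the hereditary ($n=1$) case this identification is classical; for a $2$-representation-finite $\Lambda$ it should follow from the compatibility between the reflection transformations and the $2$-APR tilts described elsewhere in the paper, together with the fact that iterating the local reflections implements $\tau_2$ on dimension vectors. Granting this, the cluster-roots are exactly $\{\Phi^{-u}\mathbf{p}_i : u\ge 0,\ i\in Q_0\}$, and the dual description using $\mathbf{q}_j$ gives the forward-orbit counterpart.

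The second step is to analyze an arbitrary $\Phi$-positive root $x$ via its $\Phi$-orbit. Since $x$ is $\Phi$-positive, both forward and backward iterates lie in $\mathbb{Z}^l_{\ge 0}$ and remain roots of $q_\Lambda$. My strategy would be an inductive reduction modeled on the usual proof of Gabriel's theorem: pick an admissible ordering and apply the inverse reflections $t_i^{-1}$ in turn; each such application corresponds to a $2$-APR tilt and transports the problem to another $2$-representation-finite algebra, preserving $\Phi$-positivity. After finitely many reductions I would aim to land on some $\mathbf{p}_i$, which is the dimension vector of an indecomposable projective and hence manifestly a cluster-root; pulling back through the chain of APR tilts yields a cluster-indecomposable module with dimension vector $x$.

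The main obstacle is proving termination of this reduction, which is exactly what makes the statement conjectural. For Dynkin quivers $\Phi$ has finite order, so every $\Phi$-positive orbit is automatically finite and must meet the finite set $\{\mathbf{p}_i\}$. For a general $2$-representation-finite $\Lambda$, however, $\Phi$ can have infinite order on the full root system, and a priori a $\Phi$-positive orbit could be infinite while still avoiding every $\mathbf{p}_i$. Ruling this out requires extra input beyond what is developed in the excerpt; the natural candidates are (i) a Calabi--Yau type periodicity $\Phi^{N}\mathbf{p}_i = \mathbf{p}_i$ for cluster-roots coming from the $2$-cluster category of $\Lambda$, forcing every $\Phi$-positive orbit to be contained in the (finite) union of cluster-root orbits; or (ii) a categorification statement showing that any $\Phi$-positive root is the class of an object in $\add M$ where $M$ is the $2$-cluster tilting module. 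Either approach sidesteps the difficulty that one has no direct combinatorial mechanism to recognize membership in the cluster-root set among $\Phi$-positive roots, and this is precisely the gap that keeps the statement at the level of conjecture.
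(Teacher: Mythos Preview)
The statement is a conjecture; the paper does not prove it in general and only establishes it under the extra hypothesis that every $\Lambda^i$ is representation-directed (Theorem~\ref{directed}, Corollary~\ref{iterated}). So a complete proof is not expected, and indeed you correctly frame your proposal as identifying the obstruction rather than closing it. However, your diagnosis of the obstruction is off in two places.

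First, your claim that ``$\Phi$ can have infinite order on the full root system'' is false here: Proposition~\ref{finite order} shows $\Phi^d=\mathbf{1}$ for some $d$, using the twisted fractionally Calabi--Yau property of $n$-representation-finite algebras. So every $\Phi$-orbit is finite, and your candidate~(i) (Calabi--Yau periodicity) is already available---but it does not help. Knowing that the cluster-root orbits and the orbit of a given $\Phi$-positive root $x$ are both finite gives no mechanism forcing them to intersect; finiteness of orbits is not the issue. (Also, the identification $\mathbf{c}=\Phi$ you say ``should follow'' is Corollary~\ref{c=-C}.)

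Second, and more substantively, the paper's partial proof reveals where the genuine difficulty lies, and it is not on the combinatorial side at all. The argument of Theorem~\ref{directed} lifts the problem to the module level: one first needs an indecomposable $\Lambda$-module $X$ with $\dd X=x$ (this uses Bongartz's Theorem~\ref{B}, hence representation-directedness), and then one must show $X\in\FF_0(T_1)$ so that the reflection functor $\T_1^+$ can be applied and the process iterated (Lemma~\ref{t gives}, again using representation-directedness via a path argument to kill $\Hom_\Lambda(\tau_2^-P_1,X)$). Your purely combinatorial reduction via the $t_i^{-1}$ cannot see either obstruction: applying $t_i^{-1}$ to a vector always succeeds, but there is no guarantee that the resulting vector is the dimension vector of a module in the correct torsion class. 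The gap is categorical, not numerical.
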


We prove the conjecture for an important class of algebras.

\begin{theorem}[Corollary \ref{iterated}]
If $\Lambda$ is iterated tilted, then Conjecture \ref{conj0} is true. 
\end{theorem}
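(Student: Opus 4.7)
My approach is to transfer the question to the hereditary case via the derived equivalence that iterated-tiltedness provides, then use an induction along the tilting sequence driven by the $n$-APR reflection theory developed earlier in the paper.

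Since $\Lambda$ is iterated tilted and $2$-representation-finite (hence representation-finite), there is a triangulated equivalence $D^b(\mod\Lambda)\simeq D^b(\mod H)$ with $H=KQ$ for a disjoint union of Dynkin quivers $Q$. This identifies $K_0(\Lambda)\simeq K_0(H)$, respects the Euler forms, and makes $\Phi_\Lambda$ and $\Phi_H$ equal as linear endomorphisms of this group; in particular $\Phi$ has finite order equal to the Coxeter number of $Q$, so every $\Phi$-orbit on $K_0$ is finite.

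By Proposition \ref{cluster proper}, cluster-indecomposable $\Lambda$-modules are the $\tau_2^iP$ with $i\geq0$ and $P$ indecomposable projective; their $K_0$-classes are obtained from the $[P]$ by iterates of the Coxeter transformation (with appropriate sign bookkeeping coming from the homological shift in $D^b$). Combined with Theorem \ref{C-positive} this yields the easy inclusion cluster-roots $\subseteq$ $\Phi$-positive roots, and the content of the theorem is the converse: that the finite $\Phi$-orbit of any $\Phi$-positive root $\alpha$ meets the set of classes of indecomposable projective $\Lambda$-modules, for then $\alpha$ is visibly a cluster-root.

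I would establish this converse by induction on the length of an iterated-tilting sequence $H,\Lambda_1,\ldots,\Lambda_r=\Lambda$. Each step is an $n$-APR tilt, inducing (by the reflection results earlier in the paper) a reflection transformation $t_i$ of Definition \ref{reflection} that identifies Grothendieck groups, intertwines Coxeter transformations, and takes cluster-roots bijectively to cluster-roots. The extra compatibility to verify is that $t_i$ also preserves $\Phi$-positivity in both directions; granting this, one transports the problem along the sequence and closes against a base configuration in which $\Phi$-positive roots and cluster-roots manifestly agree (for instance a higher Auslander-type minimal model). The main obstacle is precisely this last compatibility: reflection transformations deform the positive cone of $K_0$ in a nontrivial way, so proving that no $\Phi$-positive orbit is created or destroyed under a single $n$-APR tilt is the technical heart of the argument; the remaining bookkeeping about signs from the shift and the identification of a convenient base case is secondary.
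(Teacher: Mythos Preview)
Your proposal has a genuine structural gap. The iterated-tilting sequence $H,\Lambda_1,\ldots,\Lambda_r=\Lambda$ connecting $\Lambda$ to a hereditary algebra consists of \emph{arbitrary} tilts, not $n$-APR tilts, and the intermediate algebras $\Lambda_i$ are not $2$-representation-finite in general. The reflection transformations $t_i$ of Definition~\ref{reflection} and all of the compatibility results you want to invoke (Corollary~\ref{kakan2}, Proposition~\ref{number root}) are proved only for $n$-APR tilts between $n$-representation-finite algebras; they simply do not apply along your sequence. There is also no well-defined notion of ``cluster-root for $n=2$'' at the base $H$, which is $1$-representation-finite, so the induction has no anchor. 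A further issue is that the positive cone $\mathbb{Z}^l_{\geq 0}$, and hence $\Phi$-positivity, is not preserved under a general derived equivalence, so transporting the problem to $H$ loses exactly the information you need. (Incidentally, $2$-representation-finiteness does not imply representation-finiteness; see Example~\ref{exam3}(2).)

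The paper's argument is organised quite differently. Iterated-tiltedness is used for one purpose only: via \cite[Theorem~3.12]{IO2} it forces $\Lambda$ (and hence every $2$-APR tilt $\Lambda^i$) to be derived equivalent to a Dynkin path algebra, and therefore representation-directed. This feeds into Theorem~\ref{directed}, whose proof runs an induction along the \emph{internal} $2$-APR sequence $\Lambda=\Lambda^1,\Lambda^2,\ldots,\Lambda^{l+1}\cong\Lambda$ (Proposition~\ref{rotation}), not along the tilting sequence back to $H$. At each step, representation-directedness allows Bongartz's theorem (Theorem~\ref{B}) to realise the given $\Phi$-positive root as $\underline{\dim}\,X$ for an indecomposable $X$; Lemma~\ref{t gives} then shows $X\in\mathcal{F}_0(T_1)$ or $X\cong P_1$, and Proposition~\ref{number root} propagates $\Phi$-positivity to the next $\Lambda^i$. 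Lemma~\ref{tilt cluster} closes the loop to conclude that $X$ is cluster-indecomposable. So the role of iterated-tiltedness is to supply representation-directedness for Bongartz's theorem, not to provide a transport mechanism for roots.
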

 
We now describe the organization of this paper.

In section \ref{preli}, we recall some definitions and results about $n$-representation-finite algebras and $n$-APR tilting modules. 
An $n$-APR tilting module over an $n$-representation-finite algebra induces an equivalence between certain nice subcategories (Corollary \ref{nearly}). This property plays a key role in this paper.

In section \ref{cluster-roots}, we explain the connection between the Coxeter transformation and $n$-AR translations. In particular, we show that the dimension vectors of cluster-indecomposable modules are roots of the Euler form.
In section \ref{Reflection transformations}, we introduce reflection transformations, which are linear transformations of the Grothendieck group. We will show that, in a certain subcategory, reflection transformations are compatible with the reflection functors. 

In section \ref{determine}, we define the Coxeter functors as a composition of reflection functors. Using them, we show that cluster-indecomposable modules can be constructed from a simple module by applying a sequence of reflection functors. This is a generalization of well-known properties of BGP reflection functors. As a corollary, we give a description of cluster-roots in terms of reflection transformations. Moreover we prove that cluster-indecomposable modules are uniquely determined by their dimension vectors.  

In section \ref{criterion}, we study a characterization of cluster-roots. We introduce the notions of $\Phi$-sign-coherence and $\Phi$-positivity. We show that cluster-roots are $\Phi$-sign-coherent roots. 
Moreover, we conjecture that $\Phi$-positive roots are cluster-roots for $n=2$. 
We show that this is true for iterated tilted algebras.

Section \ref{quivers} is independent from the previous sections. We give descriptions of $n$-APR tilts in terms of quivers with relations. Moreover, following \cite{BGP}, we give an interpretation of reflection functors in terms of linear representations of quivers. Then we explain the connection with the functors defined by $n$-APR tilting modules.\\

\textbf{Notations.}
Let $K$ be an algebraically closed field and we denote by $D:=\Hom_K(-,K)$. 
All algebras $\La$ are assumed to be basic, indecomposable and finite dimensional over $K$. 
All modules are right modules. We denote by $\mod\Lambda$ the category of finitely generated $\Lambda$-modules and by add$M$ the subcategory of $\mod\Lambda$ consisting of direct summands of finite direct sums of copies of $M$.
The composition $gf$ means first $f$, then $g$. We always assume that a quiver $Q$ is finite, connected and acyclic. 
We denote by $Q_0$ vertices of $Q$ and by $|Q_0|$ the number of $Q_0$. 
We denote by $Q_1$ arrows of $Q$. 
For an arrow or path $a$, we denote by $s(a)$ (respectively, $e(a)$) the start (respectively, the end) vertex of $a$. 
We denote by $KQ/(R)$ a path algebra $KQ$ bound by a set of relations $R$. 
We denote by $P_i, I_i, S_i$, respectively, the indecomposable projective, injective, simple $KQ/(R)$-module corresponding to the vertex $i \in Q_0$. 
\\

\textbf{Acknowledgement.}
First and foremost, the author would like to thank Osamu Iyama for his support and patient guidance.
Part of this work was done while he visited Trondheim. He would like to thank Idun Reiten and Steffen Oppermann for their suggestions and kind advice for correcting his English, and thank all the people at NTNU for their hospitality. 
He is grateful to Martin Herschend, Kota Yamaura and Takahide Adachi 
for their valuable advice. 


\section{Preliminaries}\label{preli}

In this section, we give a summary of some results we will use in this paper. 

Throughout this paper, we always assume that the quiver $Q$ is finite, connected and acyclic.

\subsection{Euler forms, Cartan matrix and Coxeter transformation}
We recall some terminology. We refer to \cite{ASS,Rin1} for details.

\begin{defi}\label{C matrix}
Let $\Lambda$ be a basic finite dimensional algebra with a complete set $\{e_1,\ldots,e_l\}$ of primitive orthogonal idempotents. Assume that $\Lambda$ has global dimension $n$.

$\bullet$ The \emph{Cartan matrix} of $\Lambda$ is the $(l,l)$-matrix
$$C_\Lambda =  (\dim(e_i\Lambda e_j))_{1\leq i,j\leq l}.$$

The \emph{Coxeter transformation} of $\Lambda$ is defined to be the matrix
$$\Phi_\La=\Phi := (-1)^nC_\Lambda^t C_\Lambda^{-1}.$$
Note that this definition is slightly different from the ordinary Coxeter transformation (cf. \cite[III.Definition 3.14]{ASS}). The difference is crucial in Lemma \ref{tau=c}.
 
$\bullet$ The \emph{Euler form} of $\Lambda$ is the $\mathbb{Z}$-bilinear form
$$\langle -, - \rangle : \mathbb{Z}^l \times \mathbb{Z}^l \to \mathbb{Z}$$
defined by $\langle x,y\rangle:= x^t(C_\Lambda^{-1})^t  y$
for $x ,y  \in \mathbb{Z}^l$.
We define a quadratic form $q_\Lambda(x) := \langle x,x\rangle$ and call a vector $x\in \mathbb{Z}^l$ \emph{root} if $q_\Lambda(x) = 1$.

$\bullet$ For a $\Lambda$-module $X$, we denote the \emph{dimension vector} by 
$${\dd} X :=(\dim (Xe_1),\ldots,\dim (Xe_l)).$$
\end{defi}

We recall the following properties.

\begin{lemm}\label{Euler form}
Let $\Lambda=KQ/(R)$ be a finite dimensional algebra of global dimension $n$.
\begin{itemize}
\item[(1)]We have $(C_\Lambda^tC_\Lambda^{-1})\dd P_i=\dd I_i$ for any $i\in Q_0$.
\item[(2)]We have $\langle x,y\rangle=(-1)^{n}\langle y,\Phi(x)\rangle=\langle \Phi(x),\Phi(y)\rangle.$
\item[(3)]For any $X,Y$ in $\mod\Lambda$, we have
$$ \langle {\dd}X, {\dd }Y\rangle  = \sum_{j \geq 0}(-1)^j\dim\Ext^j_\Lambda(X,Y).$$
\end{itemize}
\end{lemm}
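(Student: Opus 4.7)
I would handle (3), (1), and (2) in that order, since (3) implicitly supplies the identifications needed for (1), while (2) is an independent direct computation.

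For (3), the argument is the classical Euler--Poincar\'e reasoning. Finiteness of $\gl\Lambda$ makes $\chi(X,Y):=\sum_{j\geq 0}(-1)^j\dim\Ext^j_\Lambda(X,Y)$ a finite sum, and the long exact sequences of $\Ext$ in each variable show that $\chi$ is additive on short exact sequences in both slots. Hence it descends to a bilinear form on $K_0(\mod\Lambda)\times K_0(\mod\Lambda)\cong\Z^l\times\Z^l$ (via $[M]\mapsto\dd M$), and it suffices to check $\chi=\langle -,-\rangle$ on pairs $(P_i,M)$. Projectivity of $P_i$ kills the higher Ext terms, so $\chi(P_i,M)=\dim\Hom_\Lambda(P_i,M)=\dim Me_i=(\dd M)_i$; a one-line matrix computation verifies the same value for $\langle\dd P_i,\dd M\rangle$.

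Part (1) then comes out essentially as a byproduct: the same argument pinpoints $\dd P_i$ as a specific row/column of $C_\Lambda$, and the dual adjunction $\Hom_\Lambda(e_j\Lambda,D(\Lambda e_i))\cong D(e_j\Lambda e_i)$ (applied to $I_i=D(\Lambda e_i)$) identifies $\dd I_i$ correspondingly in terms of $C_\Lambda^t$. The claim $(C_\Lambda^tC_\Lambda^{-1})\dd P_i=\dd I_i$ then reduces to a single matrix identity verified by direct substitution.

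For (2), I would substitute the definitions of $\langle -,-\rangle$ and $\Phi$ and multiply matrices, the only simplification needed being $(C_\Lambda^{-1})^tC_\Lambda^t=I$. One line gives $\langle y,\Phi x\rangle=(-1)^n y^tC_\Lambda^{-1}x$, and since this scalar equals its transpose $(-1)^n x^t(C_\Lambda^{-1})^ty=(-1)^n\langle x,y\rangle$, the first identity $\langle x,y\rangle=(-1)^n\langle y,\Phi x\rangle$ drops out. Applying this identity twice, once with $(x,y)$ replaced by $(y,\Phi x)$, produces the factor $(-1)^{2n}=1$ and yields the second identity $\langle x,y\rangle=\langle\Phi x,\Phi y\rangle$.

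The only real obstacle is notational bookkeeping: keeping row/column vector conventions and the ordering of factors in $C_\Lambda$ consistent with the statement of the lemma. There is no conceptual subtlety; each of the three parts is a few lines once these conventions are pinned down.
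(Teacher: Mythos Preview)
Your argument is correct and entirely standard; each of the three parts is handled exactly as one would expect, and you rightly flag the row/column conventions as the only place requiring care. Note, however, that the paper does not actually prove this lemma: it is stated under the rubric ``We recall the following properties'' with a reference to \cite{ASS,Rin1} and no proof is given, so there is nothing in the paper to compare your approach against.
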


\subsection{$n$-representation-finite algebras}

In this paper, $n$-representation-finite algebras are central objects.
They were investigated in \cite{HI1,I3,IO1,IO2} and, for $n=2$, in \cite{HI2}. 
See also \cite{I1,I2}.

\begin{defi}\label{cluster-indec}
We call a finite dimensional algebra $\Lambda$ \emph{$n$-representation-finite} (for a positive integer $n$)
if $\gl\Lambda\le n$ and there exists an \emph{$n$-cluster tilting $\Lambda$-module $M$}, i.e.
\begin{eqnarray*}
\add M&=&\{X\in\mod\Lambda\ |\ \Ext^i_\Lambda(M,X)=0\ \text{ for any }\ 0<i<n\},\\
&=&\{X\in\mod\Lambda\ |\ \Ext^i_\Lambda(X,M)=0\ \text{ for any }\ 0<i<n\}.
\end{eqnarray*}
We call a $\Lambda$-module $X$ \emph{cluster-indecomposable}
if $X$ is isomorphic to an indecomposable direct summand of $M$.
\end{defi}

\begin{remk}\label{1-rep-fin}
A basic 1-representation-finite algebra $\Lambda$ is a path algebra of a Dynkin quiver.
Indeed, an algebra $\Lambda$ is 1-representation-finite if and only if
it is representation-finite and hereditary.
Then, by Gabriel's Theorem, we have $\Lambda = KQ$ for some Dynkin quiver $Q$.
\end{remk}

Let $\Lambda$ be a finite dimensional algebra of global dimension $n$. 
We denote by $\DD^b(\mod\Lambda)$ the bounded derived category of $\mod\Lambda$.
Then, we define the Nakayama functor
$$\nu:=D\circ\RHom_\Lambda(-,\Lambda):\DD^b(\mod\Lambda)\to\DD^b(\mod\Lambda),$$
and we write 
$$\nu_n:=\nu\circ[-n]:\DD^b(\mod\Lambda)\to\DD^b(\mod\Lambda).$$
Note that $\Phi$ gives the action of $\nu_n$ on the Grothendieck group $K_0(\DD^b(\mod\Lambda))$.
Moreover, we define \emph{$n$-Auslander-Reiten translations} by the following functors
\begin{align*}
\tau_n &:= D\Ext_\La^n(-,\La):\mod\La\to\mod\La,\\
\tau_n^- & := \Ext_\La^n(D\La,-):\mod\La\to\mod\La.
\end{align*}
They are related to the functors $\nu_n$ and $\nu_n^{-1}$ as follows.
\begin{align*}
\tau_n &= H^0(\nu_n-):\mod\La\to\mod\La,\\
\tau_n^- &= H^0(\nu_n^{-1}-):\mod\La\to\mod\La.
\end{align*}

Then we recall some important properties of $n$-representation-finite algebras.
\begin{prop}\label{cluster proper}\cite{I1,I3}
Let $\Lambda=KQ/(R)$ be an $n$-representation-finite algebra and $M$ be a basic $n$-cluster tilting module. 
\begin{itemize}
\item[(1)]There exist non-negative integers $m_i$ and a bijection $\sigma:Q_0\to Q_0$ 
such that $\tau_n^{m_i}I_i\cong P_{\sigma(i)}$ for any $i\in Q_0$.
Moreover, we have $M\cong\bigoplus_{i\geq0}\tau_n^{-i}(\La)\cong\bigoplus_{i\geq0}\tau_n^{i}(D\La)$. 
\item[(2)]We have an equivalence $\tau_n:\add(M/\Lambda) \to\add(M/D\Lambda)$ with a quasi-inverse $\tau_n^-:\add(M/D\Lambda) \to\add(M/\Lambda).$
\item[(3)]There exist functorial isomorphisms for any $X, Y \in \add M$ and $Z\in \mod\La$.
$$ \underline{\Hom}_\Lambda(X,Z ) \cong D\Ext^n_\Lambda(Z, \tau_nX),\ \    \overline{\Hom}_\Lambda(Z,Y) \cong D\Ext^n_\Lambda(\tau_n^-Y, Z).$$
\item[(4)]We have $\Hom_\Lambda(\tau_n^i(D\Lambda),\tau_n^j(D\Lambda))=0$ for any $i<j$.
\item[(5)]$\tau_n$ and $\tau_n^-$ are nilpotent, i.e. $\tau_n^N=0$ and $\tau_n^{-N}=0$ for a sufficiently large integer $N$.
\item[(6)]We have $\tau_n\cong\nu_n$ on $\add(M/\Lambda)$ and $\tau_n^-\cong\nu_n^{-1}$ on $\add(M/D\Lambda)$.
\end{itemize}
\end{prop}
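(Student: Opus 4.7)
The plan is to establish the six items in the logical order (3), (6), (2), (4), (5), (1). The first two form the technical core and encode the single principle that on the cluster tilting subcategory $\add M$, the derived Nakayama functor $\nu_n$ collapses to the module-level functor $\tau_n$; the remaining four items then follow by relatively formal arguments.

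First I would prove (3). For $\Lambda$ of global dimension at most $n$, classical Auslander-Reiten duality produces a functorial isomorphism $\underline{\Hom}_\Lambda(X,Z)\cong D\Ext^n_\Lambda(Z,\tau_n X)$ provided suitable intermediate $\Ext$-groups vanish. The cluster tilting property $\Ext^i_\Lambda(M,M)=0$ for $0<i<n$ delivers exactly this vanishing when $X,Y\in\add M$, yielding the stated formulas. Next I would prove (6). For $X\in\add(M/\Lambda)$, I would compute $\nu_n X = D\RHom_\Lambda(X,\Lambda)[-n]$ via a minimal projective resolution of $X$. The cluster tilting vanishing $\Ext^i_\Lambda(X,\Lambda)=0$ for $0<i<n$ confines the cohomology of $\RHom_\Lambda(X,\Lambda)$ to degrees $0$ and $n$; the key technical step is then to show $\Hom_\Lambda(X,\Lambda)=0$ for non-projective cluster summands, which leaves only $\Ext^n_\Lambda(X,\Lambda)$ and gives $\nu_n X\cong \tau_n X$ concentrated in degree zero. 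The dual argument with $D\Lambda$ and injective resolutions handles $\nu_n^{-1}$.

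With (3) and (6) secured, the remaining items follow. Claim (2) holds because $\nu_n$ is a triangulated auto-equivalence of $\DD^b(\mod\Lambda)$ whose restriction, by (6), yields an equivalence $\add(M/\Lambda)\to \add(M/D\Lambda)$ with quasi-inverse $\nu_n^{-1}$ identifiable with $\tau_n^-$. For (4), I would write $\Hom_\Lambda(\tau_n^i D\Lambda,\tau_n^j D\Lambda)\cong \Hom_{\DD^b}(\nu_n^i D\Lambda,\nu_n^j D\Lambda)\cong \Hom_{\DD^b}(D\Lambda,\nu_n^{j-i} D\Lambda)$ using (6) and the derived equivalence; for $j>i$, the iterated image $\nu_n^{j-i} D\Lambda$ sits in strictly negative cohomological degrees, so no maps from $D\Lambda$ (in degree zero) exist. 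Claim (5) is a finiteness statement: since $\add M$ has only finitely many indecomposable summands and by (2) the functor $\tau_n$ is injective on indecomposable cluster summands that are not projective, any orbit is finite. Finally (1) is obtained by iterating $\tau_n^-$ on $\Lambda$: by (2), each step yields new cluster-indecomposable summands; by (5) the sequence terminates, producing precisely the injectives at the end; the resulting bijection between projectives and injectives via the lengths $m_i$ defines $\sigma$, and running this procedure exhausts all summands of $M$.

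The main obstacle is the Hom-vanishing $\Hom_\Lambda(X,\Lambda)=0$ for $X\in\add(M/\Lambda)$ inside step (6). This is where the $n$-representation-finite hypothesis genuinely enters, rather than the mere existence of a cluster tilting module, and once this is pinned down the rest of the proposition runs on the formal machinery of derived auto-equivalence and the finite cardinality of $\add M$.
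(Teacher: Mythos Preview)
The paper does not give a proof of this proposition: it is stated as background, with a citation to Iyama's foundational papers \cite{I1,I3}, and no argument is supplied. So there is no ``paper's own proof'' to compare your outline against; what follows is an assessment on its own terms.

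Your plan is coherent and you correctly locate the crux, but the step you flag as ``the main obstacle'' --- the vanishing $\Hom_\Lambda(X,\Lambda)=0$ for indecomposable non-projective $X\in\add M$, needed to force $\nu_nX$ into degree zero in (6) --- is not just hard, it threatens circularity in your chosen order. The most direct proofs of this vanishing pass through the structural description (1) (write $X\cong\tau_n^{-i}P$ with $i>0$) or through (4), both of which you intend to derive \emph{from} (6). Your degree argument for (4) is also off as written: $\nu_n^{k}(D\Lambda)$ lies in non-negative cohomological degrees $[0,kn]$, not strictly negative ones, so the vanishing of $\Hom_{\DD^b}(D\Lambda,\nu_n^{k}D\Lambda)$ does not follow from the reason you give (it does follow, but via Serre duality and the bound in the other direction). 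In Iyama's treatment the dependencies run the other way: the $n$-Auslander--Reiten duality (3) is proven by direct transpose/syzygy computations valid for any $n$-cluster tilting subcategory; (2), (5), (1) are then obtained module-theoretically without invoking $\nu_n$; and only after the structure of $M$ is known are (6) and (4) read off as consequences. If you wish to keep your order $(3)\to(6)\to\cdots$, you must supply an independent argument for $\Hom_\Lambda(X,\Lambda)=0$ using only $\gl\Lambda\le n$ and the cluster-tilting orthogonality --- that is precisely the content you have deferred.
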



\subsection{$n$-APR tilting modules}\label{pre n-apr}

In \cite{IO1}, the authors introduced the notion of $n$-APR tilting modules as a generalization of APR tilting modules. In this subsection, we recall the definition and some properties of $n$-APR tilting modules.

\begin{defi}\label{def n-apr}\cite[Definition 3.1]{IO1} Let $\Lambda$ be a basic finite dimensional algebra, $k$ be a sink of the quiver of $\Lambda$ and $P_k$ be a simple projective $\Lambda$-module.
If ${\rm inj.dim} P_k = n$ and $\Ext^i_\Lambda(D\Lambda,P_k)=0$ for any $0 \leq i < n$, 
we call
$$ T_k := \tau_n^{-}P_k \oplus \Lambda/P_k$$
an \emph{$n$-APR tilting module}. Dually we define \emph{$n$-APR cotilting modules}.
\end{defi}

Clearly we have an ordinary APR tilting module if $n=1$.

Recall that we call a $\Lambda$-module $T$ \emph{tilting module of projective dimension $n$} \cite{M} if ${\rm proj.dim} T\leq n$, $\Ext_\Lambda^i(T,T)=0$ for $1\leq i\leq n$ and there exists an exact sequence
$0\rightarrow \Lambda \rightarrow T_0 \rightarrow\dots \rightarrow T_n \rightarrow0$ with $T_i$ in $\add T$.
Then we have the following important result.

\begin{prop}\cite[Theorem 3.2]{IO1}
An $n$-APR tilting module is a tilting module of projective dimension $n$.
\end{prop}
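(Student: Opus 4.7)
The plan is to check the three axioms of a tilting module of projective dimension $n$: that $\pd T_k \le n$, that $\Ext^i_\La(T_k, T_k)=0$ for $1 \le i \le n$, and that there is an exact sequence $0 \to \La \to T^0 \to \cdots \to T^n \to 0$ with $T^j \in \add T_k$. The central device is a projective resolution of $\tau_n^- P_k$ of length $n$, constructed as follows. Take the minimal injective resolution
$$0 \to P_k \to I^0 \to I^1 \to \cdots \to I^n \to 0,$$
which has length $n$ by ${\rm inj.dim}\, P_k = n$. Applying $\nu^{-1} = \RHom_\La(D\La, -)$ termwise yields a bounded complex of projectives representing $\nu^{-1} P_k \in K^b(\proj\La)$, and the hypothesis $\Ext^i_\La(D\La, P_k) = 0$ for $0 \le i < n$ forces its only nonzero cohomology to sit in degree $n$ and to equal $\tau_n^- P_k$. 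Hence we obtain the exact sequence
$$0 \to \nu^{-1}(I^0) \to \nu^{-1}(I^1) \to \cdots \to \nu^{-1}(I^n) \to \tau_n^- P_k \to 0.$$
Since $P_k = S_k$ is simple, $I^0 = I_k$ and $\nu^{-1}(I^0) = P_k$.

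The projective-dimension and tilting-resolution axioms fall out quickly from this. For $j \ge 1$ the multiplicity of $I_k$ in $I^j$ equals $\dim \Ext^j_\La(S_k, P_k) = \dim \Ext^j_\La(P_k, P_k)$, which vanishes by projectivity of $P_k$; hence $\nu^{-1}(I^j) \in \add(\La/P_k)$ for every $j \ge 1$. So the sequence above is already a resolution of $P_k$ by modules in $\add T_k$, and splicing it with the identity on $\La/P_k$ yields
$$0 \to \La \to \nu^{-1}(I^1) \oplus (\La/P_k) \to \nu^{-1}(I^2) \to \cdots \to \nu^{-1}(I^n) \to \tau_n^- P_k \to 0,$$
the desired tilting resolution. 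Projective dimension of $T_k$ is bounded by $n$ because $\La/P_k$ is projective and $\tau_n^- P_k$ has a projective resolution of length $n$ by the same sequence.

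The main obstacle is the $\Ext$-vanishing. The cleanest route invokes Serre duality: $\nu^{-1} P_k \in K^b(\proj \La)$ is perfect, so $\Hom_{\DD^b}(\nu^{-1} P_k, Z) \cong D\Hom_{\DD^b}(Z, \nu\nu^{-1} P_k) = D\Hom_{\DD^b}(Z, P_k)$ for every $Z \in \DD^b(\mod \La)$. Together with the identification $\tau_n^- P_k \simeq \nu^{-1}[n] P_k$ already obtained, this produces the key identity
$$\Ext^i_\La(\tau_n^- P_k, M) \cong D\Ext^{n-i}_\La(M, P_k)$$
for any module $M$ and any integer $i$. Applied to $M = \La/P_k$, the right-hand side vanishes for $1 \le i \le n-1$ by projectivity of $\La/P_k$, while for $i = n$ it reduces to $D\Hom(\La/P_k, P_k) = 0$ because $P_k = S_k$ is simple and $\Hom(P_j, S_k) = 0$ for $j \neq k$. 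Applied to $M = \tau_n^- P_k$, iterating the identity rewrites $\Ext^i(\tau_n^- P_k, \tau_n^- P_k)$ as $\Ext^i(P_k, P_k)$, which vanishes for $i \ge 1$ by projectivity of $P_k$. Combined with $\Ext^i(\La/P_k, -) = 0$ for $i \ge 1$, this gives $\Ext^i(T_k, T_k) = 0$ for $1 \le i \le n$, completing the verification.
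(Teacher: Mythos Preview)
Your argument is correct. The paper itself does not prove this proposition; it simply cites \cite[Theorem 3.2]{IO1}. Your construction of the projective resolution of $\tau_n^- P_k$ by applying $\nu^{-1}$ to the minimal injective resolution of $P_k$ is exactly the mechanism the paper later invokes (see the beginning of Section~\ref{quivers}, sequence (i)), and the remaining verifications---the tilting coresolution of $\Lambda$ and the $\Ext$-vanishing via the Serre duality isomorphism $\Ext^i_\Lambda(\tau_n^- P_k, M) \cong D\Ext^{n-i}_\Lambda(M, P_k)$---are carried out cleanly.

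One small remark: your observation that $\nu^{-1}(I^j)\in\add(\Lambda/P_k)$ for $j\ge 1$ (because the multiplicity of $I_k$ in $I^j$ is $\dim\Ext^j_\Lambda(S_k,P_k)=0$) is the point that makes everything work, and it is worth stating explicitly, as you do. The double application of the duality identity to reduce $\Ext^i(\tau_n^-P_k,\tau_n^-P_k)$ to $\Ext^i(P_k,P_k)$ is also correct; alternatively one could note directly that $\tau_n^-P_k\simeq\nu_n^{-1}P_k$ and that $\nu_n^{-1}$ is an autoequivalence of $\DD^b(\mod\Lambda)$, so $\Ext^i(\tau_n^-P_k,\tau_n^-P_k)\cong\Ext^i(P_k,P_k)$ immediately.
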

 
It is a classical result that $1$-representation-finite algebras (= path algebras of Dynkin quivers) are closed under taking endomorphism algebras of APR tilting modules. The following proposition is a generalization of this property and shows that $n$-APR tilting modules behave nicely over $n$-representation-finite algebras.
 
\begin{prop}\label{n-apr}\cite[Theorem 4.2, 4.7]{IO1}
Let $\Lambda$ be a basic $n$-representation-finite algebra and $k$ be a sink of the quiver of $\Lambda$. 
\begin{itemize}
\item[(1)] $P_k$ satisfies Definition \ref{def n-apr} and hence admits the $n$-APR tilting modules $T_k$.
\item[(2)] $\End_\Lambda(T_k)$ is an $n$-representation-finite algebra.
\item[(3)] $\Ext_\Lambda^n(T_k, {P_k})$ is isomorphic to the simple injective $\End_\Lambda(T_k)$-module corresponding to the vertex $k$.
\end{itemize}
\end{prop}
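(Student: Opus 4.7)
The plan is to deduce each part from the structure of $\Lambda$ as an $n$-representation-finite algebra, using Proposition \ref{cluster proper} as the main tool and standard tilting machinery for part (2).

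For part (1), I would first argue that $P_k$ cannot be injective: if it were, then since $P_k$ is simple, $P_k = I_k$ forces no arrows ending at $k$ either, so $k$ decomposes as a block; connectedness of $\Lambda$ would then give $\Lambda = K$ and $\gl \Lambda = 0$, contradicting $n \geq 1$. Hence $P_k \in \add(M/D\Lambda)$, so Proposition \ref{cluster proper}(6) gives $\tau_n^- P_k \cong \nu_n^{-1} P_k$ in $\mod \Lambda$. Unwinding $\nu_n^{-1} = \RHom_\Lambda(D\Lambda,-)[n]$, this isomorphism is equivalent to $\RHom_\Lambda(D\Lambda, P_k)$ being concentrated in cohomological degree $n$, i.e.\ $\Ext^i_\Lambda(D\Lambda, P_k) = 0$ for every $i \neq n$. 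This yields the vanishing demanded by Definition \ref{def n-apr}. Moreover $\tau_n^- P_k \neq 0$ by Proposition \ref{cluster proper}(2), so $\Ext^n_\Lambda(D\Lambda, P_k) \neq 0$ and ${\rm inj.dim}\, P_k \geq n$, which combined with ${\rm inj.dim}\, P_k \leq \gl \Lambda \leq n$ gives equality.

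For part (2), once (1) is established, $T_k$ is a tilting module of projective dimension $n$ by the proposition preceding the statement, and standard tilting theory produces a triangle equivalence $F := \RHom_\Lambda(T_k,-) : \DD^b(\mod \Lambda) \to \DD^b(\mod \Gamma)$ with $\Gamma := \End_\Lambda(T_k)$. The critical observation is that $T_k \in \add M$: indeed $\Lambda/P_k \in \add \Lambda \subseteq \add M$ and $\tau_n^- P_k \in \add M$ by Proposition \ref{cluster proper}(2). I would then transport $M$ across $F$, using the $n$-$\Ext$-vanishing characterization of $n$-cluster tilting together with compatibility of Serre duality ($\nu_n$) with $F$, to show that the summands of $F(M)$, after being shifted into a single cohomological degree, assemble into an $n$-cluster tilting $\Gamma$-module; a parallel homological-shift count for $F(\Lambda)$ and $F(D\Lambda)$ simultaneously pins down $\gl \Gamma \leq n$. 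For part (3), I would compute $\RHom_\Lambda(T_k, P_k)$ as $\RHom_\Lambda(\tau_n^- P_k, P_k) \oplus \RHom_\Lambda(\Lambda/P_k, P_k)$. The second summand vanishes: $\Lambda/P_k$ is projective and $\Hom_\Lambda(P_i, P_k) = 0$ for $i \neq k$ since $k$ is a sink and $P_k$ is simple. For the first summand, Proposition \ref{cluster proper}(3) with $X = Z = \tau_n^- P_k$ (so $\tau_n X = P_k$) yields $\Ext^n_\Lambda(\tau_n^- P_k, P_k) \cong D\,\underline{\End}_\Lambda(\tau_n^- P_k)$, which is one-dimensional, while the lower $\Ext$ groups vanish as both arguments lie in $\add M$. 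Hence $F(P_k)[n]$ is the simple $\Gamma$-module at the vertex corresponding to $\tau_n^- P_k$, which we label $k$; its injectivity over $\Gamma$ then follows from the dual cotilting-side of the derived equivalence, where $D\Lambda$ passes to $D\Gamma$ up to an $n$-shift.

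The main obstacle, I expect, is part (2): showing that $F(M)$ collapses into a single cohomological degree while retaining the $n$-cluster tilting property, and simultaneously bounding $\gl \Gamma$ by $n$, requires delicate tracking of how $F$ permutes homological degrees on the summands of $M$. For $n = 1$ this is classical APR tilting over hereditary algebras; for $n \geq 2$ it is the technical core of \cite[Theorems 4.2 and 4.7]{IO1}.
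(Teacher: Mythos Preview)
The paper does not prove this proposition: it is quoted from \cite[Theorems~4.2 and~4.7]{IO1} and used as a black box throughout, so there is no argument in the present paper to compare your sketch against.

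That said, your outline is largely sound as a reconstruction of what must happen in \cite{IO1}. Part~(1) is essentially complete; the key point is indeed Proposition~\ref{cluster proper}(6), which forces $\nu_n^{-1}P_k$ to be concentrated in degree~$0$ once $P_k$ is non-injective, and that is exactly the required $\Ext$-vanishing together with ${\rm inj.dim}\,P_k=n$. One quibble: $\gl\Lambda=0$ does not contradict $n\ge1$ under the paper's definition (only $\gl\Lambda\le n$ is required), so your connectedness argument does not literally yield a contradiction; rather, in the degenerate case $\Lambda=K$ the hypothesis ${\rm inj.dim}\,P_k=n$ of Definition~\ref{def n-apr} fails and one simply excludes it. For part~(3), your computation via Proposition~\ref{cluster proper}(3) is correct; to see that $D\underline{\End}_\Lambda(\tau_n^-P_k)$ is one-dimensional you should invoke the equivalence in Proposition~\ref{cluster proper}(2), which gives $\End_\Lambda(\tau_n^-P_k)\cong\End_\Lambda(P_k)=K$. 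The injectivity of the resulting simple $\Gamma$-module is the one step you genuinely wave past; it requires either the compatibility $F\circ\nu\cong\nu\circ F$ of the derived equivalence with the Nakayama functors, or the explicit quiver description of $\Gamma$ that the present paper gives only later in Proposition~\ref{mutation}. You are right that part~(2) is where the real work lies, and that is precisely the content of \cite[Theorem~4.2]{IO1}.
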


From this proposition, we know that a sink vertex $k$ becomes a source vertex of $\End_\Lambda(T_k)$.
A more detailed description of the quiver and relations of $\End_\Lambda(T_k)$ will be given in section \ref{quivers}.
By the above proposition, it is natural to ask for a relationship between $n$-cluster tilting modules of $\mod\Lambda$ and $\mod\End_\Lambda(T_k)$.
For this purpose, we recall a result of tilting theory.
 
Let $T$ be a tilting $\Lambda$-module and $\Gamma := \End_\Lambda(T)$. Let $\DD^b(\mod\Lambda)$ and $\DD^b(\mod\Gamma)$ be the bounded derived category of $\mod\Lambda$ and $\mod\Gamma$, respectively.
Then we have functors
\begin{align*}
\F &:=\RHom_\La(T,-):\DD^b(\mod\Lambda)\to\DD^b(\mod\Gamma),\\
\F_i &:=\Ext^i_\Lambda(T,-):\mod\La\to\mod\Gamma.
\end{align*}
we denote by 
\begin{eqnarray*}
\FF_i(T)=\FF_i:=\{X \in \mod \Lambda\ |\ \Ext^j_\Lambda(T,X) = 0 \ \text{for any} \ j \neq i\},\\
\XX_i(T)=\XX_i:=\{Y \in \mod \Gamma\ |\ \Tor_j^\Gamma(Y,T) = 0 \ \text{for any} \ j \neq i\}.
\end{eqnarray*}
First we recall the following result of Happel.

\begin{lemm}\label{tilt theorem}\cite{H}
In the above set-up, $\F :\DD^b(\mod\Lambda)\to\DD^b(\mod\Gamma)$ gives an equivalence and
the restriction of the functor $[i]\circ\F$ gives an equivalence $\F_i:\FF_i\to\XX_i.$
\end{lemm}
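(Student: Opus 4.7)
The plan is to follow Happel's classical argument. Since $T$ is a tilting module of projective dimension $n$, the functor $\F = \RHom_\La(T,-)$ is well-defined on $\DD^b(\mod\Lambda)$ (finite projective dimension ensures that $\RHom_\La(T,X)$ is bounded for every bounded complex $X$). Its natural candidate for a quasi-inverse is $\G := - \Lotimes_\Gamma T : \DD^b(\mod\Gamma)\to\DD^b(\mod\Lambda)$, which is also bounded because the tilting hypothesis provides a finite resolution $0\to\La\to T_0\to\cdots\to T_n\to 0$ with $T_i\in\add T$, so $\La$ (and hence every finitely generated $\Lambda$-module, via a finite projective resolution) lies in the thick subcategory generated by $T$.

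For the first assertion, I would verify full faithfulness and essential surjectivity. Full faithfulness reduces, by thick-subcategory devissage, to showing that the adjunction unit $X\to\G\F(X)$ is an isomorphism when $X=T$; but $\RHom_\La(T,T)=\Gamma$ concentrated in degree $0$ (because $\Ext^i_\La(T,T)=0$ for $i>0$), so $\G\F(T)=\Gamma\Lotimes_\Gamma T = T$. Similarly, full faithfulness on the other side follows from $\F\G(\Gamma)=\RHom_\La(T,T)=\Gamma$. Essential surjectivity is automatic once $\Gamma$ lies in the essential image and one knows that $\Gamma$ generates $\DD^b(\mod\Gamma)$ as a thick subcategory, which it does since every finitely generated $\Gamma$-module has a projective resolution.

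For the second assertion, I would argue as follows. If $X\in\FF_i$, then by definition $\Ext^j_\La(T,X)=0$ for $j\neq i$, so $\F(X)$ is quasi-isomorphic to the complex $\F_i(X)=\Ext^i_\La(T,X)$ placed in cohomological degree $i$; hence $[i]\circ\F$ sends $\FF_i$ into $\mod\Gamma$. Applying the quasi-inverse $\G$ and using that $\G$ of a module $Y$ concentrated in degree $0$ has cohomology $H^{-j}(Y\Lotimes_\Gamma T)=\Tor^\Gamma_j(Y,T)$, one sees $\G(Y[-i])$ is a $\Lambda$-module precisely when $Y\in\XX_i$. Combined with the already-established derived equivalence, this yields the quasi-inverse $\Tor^\Gamma_i(-,T):\XX_i\to\FF_i$ of $\F_i=[i]\circ\F|_{\FF_i}$.

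The main subtlety is the bookkeeping with shifts and the verification that $\G$ actually produces bounded complexes; this is where the finite projective dimension of $T$ (as opposed to the classical $n=1$ case) is essential. Once boundedness of $\F$ and $\G$ is in hand, the rest is a formal adjunction-plus-generation argument. I would not dwell on the standard parts, but I would be careful to state the cohomological interpretation $H^i\F(X)=\Ext^i_\La(T,X)$ and $H^{-j}\G(Y)=\Tor^\Gamma_j(Y,T)$ explicitly, since these are exactly the identifications that turn the derived equivalence into the restricted module-level equivalences $\FF_i\cong\XX_i$.
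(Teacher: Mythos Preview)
Your proposal is a correct sketch of Happel's classical argument. Note, however, that the paper does not give its own proof of this lemma: it is stated with a citation to \cite{H} and used as a black box, so there is no in-paper argument to compare against. Your outline is more than what the paper provides.
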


Next we give a relationship between the cluster tilting subcategories and $\FF_0$ as follows.

\begin{lemm}\label{CT and T}
Let $\Lambda =KQ/(R)$ be an $n$-representation-finite algebra, $k\in Q_0$ be a sink,
$T_k$ be the corresponding $n$-APR tilting $\Lambda$-module and $X$ be a cluster-indecomposable $\Lambda$-module. 
Then exactly one of the following holds.
\begin{itemize}
\item[(1)] $X \in \FF_0(T_k)$.
\item[(2)] $X\cong P_k$.
\end{itemize}
\end{lemm}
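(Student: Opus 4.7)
The plan is to verify directly that $X$ lies in $\FF_0(T_k)$ whenever $X \not\cong P_k$, and separately that $P_k$ itself does not, which together give that exactly one of (1), (2) holds. First, decompose $T_k = \tau_n^- P_k \oplus \Lambda/P_k$. The summand $\Lambda/P_k$ is projective, so it contributes nothing to $\Ext^{j}_\Lambda(T_k,-)$ for $j\geq 1$. Moreover $T_k$ is a tilting module of projective dimension at most $n$, so $\Ext^j_\Lambda(T_k, X) = 0$ for $j > n$. The entire question thus reduces to showing $\Ext^j_\Lambda(\tau_n^- P_k, X) = 0$ for $1 \leq j \leq n$.

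Next, since $P_k$ is an indecomposable summand of $\Lambda$, Proposition \ref{cluster proper}(1) places $\tau_n^- P_k$ in $\add(\tau_n^{-1}\Lambda)\subset\add M$, so it is cluster-indecomposable. Consequently both $\tau_n^- P_k$ and $X$ lie in $\add M$, and the $n$-cluster tilting property forces $\Ext^j_\Lambda(\tau_n^- P_k, X) = 0$ for $1 \leq j \leq n-1$. For the remaining top Ext, I apply the functorial isomorphism in Proposition \ref{cluster proper}(3) with $Y = P_k$ and $Z = X$ to get
$$\Ext^n_\Lambda(\tau_n^- P_k, X) \cong D\,\overline{\Hom}_\Lambda(X, P_k).$$
Because $k$ is a sink, $P_k$ is a simple projective $\Lambda$-module, and any non-zero morphism $X \to P_k$ is necessarily surjective (by simplicity) and split (by projectivity). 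The indecomposability of $X$ then forces $X \cong P_k$. Hence when $X \not\cong P_k$ we have $\Hom_\Lambda(X, P_k) = 0$, and therefore $\Ext^n_\Lambda(\tau_n^- P_k, X) = 0$, which completes case (1).

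Finally, to see that (1) and (2) cannot hold simultaneously, I invoke Proposition \ref{n-apr}(3): $\Ext^n_\Lambda(T_k, P_k)$ equals the simple injective $\End_\Lambda(T_k)$-module attached to $k$, which is non-zero, so $P_k \notin \FF_0(T_k)$. The only genuinely non-formal step is the conversion of the top Ext to $\overline{\Hom}$ via the $n$-Auslander-Reiten formula combined with exploiting that $P_k$ is simple projective; the remaining reductions are bookkeeping from the definitions of $n$-APR and $n$-cluster tilting.
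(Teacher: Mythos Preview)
Your proof is correct and follows essentially the same approach as the paper: both use the cluster tilting condition to kill $\Ext^j$ for $0<j<n$, then convert $\Ext^n_\Lambda(\tau_n^-P_k,X)$ to $D\overline{\Hom}_\Lambda(X,P_k)$ via Proposition~\ref{cluster proper}(3) and exploit that $P_k$ is simple projective. The only cosmetic differences are that the paper treats $T_k$ as a single summand of $M$ rather than splitting off $\Lambda/P_k$ first, and for mutual exclusivity it argues directly from $\overline{\Hom}_\Lambda(P_k,P_k)\neq 0$ instead of invoking Proposition~\ref{n-apr}(3).
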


\begin{proof}
By Proposition \ref{cluster proper} (1), $T_k:=\tau_n^{-}P_k \oplus \Lambda/P_k$ is a direct summand of the cluster tilting module. 
Hence we have $\Ext_\La^i(T_k,X)=0$ for any $0< i < n$. Clearly we have $\Ext_\La^i(T_k,X)=0$ for any $i > n$ since $\gl\La\leq n$.
On the other hand, by Proposition \ref{cluster proper} (3), we have
$$\Ext^n_\Lambda (T_k , X) \cong \Ext^n_\Lambda (\tau_n^{-}P_k , X)
\cong D\overline{\Hom}_\Lambda (X ,P_k).$$
Assume that $X\in\FF_0(T_k)$. Since we have $\overline{\Hom}_\Lambda (X ,P_k)=0$, we obtain $X\ncong P_k$. 
Conversely if $X\ncong P_k$,  we obtain $\overline{\Hom}_\Lambda (X ,P_k)=0$ since $P_k$ is simple projective. Hence we have $X\in\FF_0(T_k)$.
\end{proof}

Then, by combining with Lemma \ref{tilt theorem} and \ref{CT and T}, we have the next corollary (cf. \cite[Theorem 4.2]{IO1}).

\begin{cor}\label{nearly}
Let $\Lambda =KQ/(R)$ be an $n$-representation-finite algebra, $k\in Q_0$ be a sink,
$T_k$ be the corresponding $n$-APR tilting $\Lambda$-module and $\Gamma := \End_\Lambda(T_k)$.
Denote by $M$ and $M'$ the basic $n$-cluster tilting modules of $\mod\Lambda$ and $\mod\Gamma$, respectively.
Then, the functors $\Hom_\Lambda(T_k,-)$ and $-\otimes_\Gamma T_k$ induce quasi-inverse equivalences 
$$\add (M/P_k) \cong \add (M'/I_k),$$
where $P_k$ and $I_k$ denote the simple projective $\Lambda$-module and the simple injective $\Gamma$-module associated with $k$, respectively.
\end{cor}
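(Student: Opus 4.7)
The plan is to combine Lemma \ref{CT and T} with Lemma \ref{tilt theorem}. By Lemma \ref{CT and T}, every indecomposable summand of $M$ other than $P_k$ lies in $\FF_0(T_k)$, so $\add(M/P_k)\subseteq\FF_0(T_k)$. Lemma \ref{tilt theorem} then gives that $\F_0=\Hom_\La(T_k,-)$ and $-\otimes_\Gamma T_k$ are quasi-inverse equivalences between $\FF_0(T_k)$ and $\XX_0(T_k)$, so their restrictions give quasi-inverse equivalences between $\add(M/P_k)$ and its essential image $\F_0(\add(M/P_k))\subseteq\XX_0(T_k)$. The remaining substantive task is to identify this image with $\add(M'/I_k)$.

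For this I would show that $N:=\F_0(M/P_k)\oplus I_k$ is a basic $n$-cluster tilting $\Gamma$-module. Since $\Gamma$ is $n$-representation-finite by Proposition \ref{n-apr}(2), its basic $n$-cluster tilting module is unique (it is characterized as $\bigoplus_{i\geq0}\tau_n^{-i}\Gamma$ in Proposition \ref{cluster proper}(1)), so $N\cong M'$. Granting this, the isomorphism $I_k\cong\F_n(P_k)$ of Proposition \ref{n-apr}(3) places $I_k$ in $\XX_n(T_k)$ rather than $\XX_0(T_k)$, so $I_k$ cannot appear as a summand of anything in $\F_0(\add(M/P_k))$, and cancellation yields $\F_0(\add(M/P_k))=\add(M'/I_k)$, as required.

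The cluster tilting property of $N$ would transfer from that of $M=(M/P_k)\oplus P_k$ via the derived equivalence. Concretely, $\Ext^i_\Gamma(\F_0 X,\F_0 Y)\cong\Ext^i_\La(X,Y)$ for $X,Y\in\FF_0(T_k)$ handles the $\F_0(M/P_k)$ block, while the mixed terms involving $I_k$ become Ext-groups between $M/P_k$ and $P_k$ in $\mod\La$ after the degree shift $P_k\mapsto I_k[-n]$ afforded by Proposition \ref{n-apr}(3); all of these vanish in the range $0<i<n$ because $M$ is $n$-cluster tilting. The main obstacle is to legitimize this degree shift, i.e.\ to verify that $P_k\in\FF_n(T_k)$, or equivalently that $\Ext^i_\La(T_k,P_k)=0$ for $i<n$. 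This uses that $k$ is a sink, so $P_k$ is simple projective, together with the defining vanishing $\Ext^i_\La(D\La,P_k)=0$ for $i<n$ from Definition \ref{def n-apr}, applied to each summand of $T_k=\tau_n^- P_k\oplus\La/P_k$.
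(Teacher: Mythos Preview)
Your opening strategy matches the paper exactly: combine Lemma~\ref{CT and T} (to place $\add(M/P_k)$ inside $\FF_0(T_k)$) with Lemma~\ref{tilt theorem} (the equivalence $\FF_0\simeq\XX_0$). The paper in fact gives no further argument and simply defers the identification of the image to \cite[Theorem~4.2]{IO1}.

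Where you go beyond the paper is in attempting to prove that identification directly, and here there is a genuine gap. You propose to show that $N=\F_0(M/P_k)\oplus I_k$ is $n$-cluster tilting, and then invoke uniqueness to conclude $N\cong M'$. But what your last paragraph actually establishes is only that $N$ is \emph{rigid}: the derived-equivalence transfer handles $\Ext^i_\Gamma(N,N)=0$ for $0<i<n$, and your verification that $P_k\in\FF_n(T_k)$ (which is correct, using $\Hom_\Lambda(\tau_n^-\Lambda,\Lambda)=0$ from Proposition~\ref{cluster proper}(4) together with projectivity of $\Lambda/P_k$) legitimizes the shift $P_k\leftrightarrow I_k[-n]$. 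Rigidity alone does not give cluster tilting; you still need the maximality condition $\add N=\{Y\in\mod\Gamma\mid\Ext^i_\Gamma(N,Y)=0,\ 0<i<n\}$. An arbitrary such $Y$ corresponds under $\F^{-1}$ to a \emph{complex} in $\DD^b(\mod\Lambda)$, not a module, so the cluster tilting property of $M$ in $\mod\Lambda$ does not apply directly. One needs the derived $n$-cluster tilting subcategory $\add\{\nu_n^i\Lambda\mid i\in\Z\}\subset\DD^b(\mod\Lambda)$ and the fact that derived equivalences intertwine $\nu_n$; this is essentially the content of \cite[Theorem~4.2]{IO1} that the paper cites.

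Your argument that $I_k$ cannot occur as a summand of $\F_0(M/P_k)$ (because $I_k\in\XX_n$ and $\XX_0\cap\XX_n=0$) is clean and correct, so once the maximality gap is filled the rest of your outline goes through.
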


Furthermore, we need to recall the following definition.

\begin{defi}\label{admissible}
Let $Q$ be a finite, connected and acyclic quiver with $|Q_0|=l$. 
We call an ordering $i_1\ldots i_l$ of the vertices of $Q$ \emph{admissible} if for each $j$, the vertex $i_j$ is a sink for
the quiver obtained by removing vertices $i_1,\ldots, i_{j-1}$ 
from $Q$.
\end{defi}
Note that there exists an admissible ordering if $Q$ is acyclic. 
Throughout this paper, we assume that $Q$ has an admissible numbering $(1\ldots l)$ for simplicity.

Then, for an $n$-representation-finite algebra $\Lambda=KQ/(R)$, we have $n$-APR tilted algebras defined inductively by $\La^1:=\La,\La^2:=\End_{\La^1}(T_1),\ldots,\La^l:=\End_{\La^{l-1}}(T_{l-1})$ from Proposition \ref{n-apr}, where $T_{i}$ is the $n$-APR tilting $\Lambda^{i}$-module associated with $i\in Q_0$.  
Note that the quiver of $\La^i$ is also acyclic and has an admissible numbering. Explicit descriptions of $n$-APR tilted algebras are given in section \ref{quivers}.

\begin{exam}\label{exam0}
\begin{itemize}
\item[(1)]Let $\Lambda$ be the $n$-representation-finite algebra given by the following quiver with radical square zero relations.
\[\xymatrix@C10pt@R10pt{   & 2 \ar[ld] & 3 \ar[l] & \\
1 &   &   & 4\ar[lu] &\\
 & n+1 \ar[r] &n \ar@{.}[ru]  &    }\]

Then we have $n$-APR tilted algebras as follows.

$\xymatrix@C10pt@R10pt{   & 2 \ar[ld] & 3 \ar[l] &  \\
1 &   &   & 4\ar[lu] &\overset{T_1}{\Longrightarrow}\\
 & n+1 \ar[r] &n \ar@{.}[ru]  &   }
\xymatrix@C10pt@R10pt{   & 2  & 3 \ar[l] &  \\
1 \ar[rd]&   &   & 4\ar[lu] &\overset{T_2}{\Longrightarrow} \\
 & n+1 \ar[r] &n \ar@{.}[ru]  &  }
\xymatrix@C10pt@R10pt{   & 2 \ar[ld]  & 3 &  \\
1 \ar[rd]&   &   & 4\ar[lu] &\overset{T_3}{\Longrightarrow}\dots\\
 & n+1 \ar[r] &n \ar@{.}[ru]  &   }$
 
where all relations are radical square zero.
\item[(2)]
Let $\Lambda$ be the $2$-representation-finite algebra given by the following quiver with commutative relations and zero relations for each small half
square.
\[
\xymatrix@C10pt@R10pt{
 & & 3 \ar[rd]& & \\
 & 5\ar[ru] \ar@{--}[rr] \ar[rd]& & 2\ar[rd] &\\
6 \ar@{--}[rr]\ar[ru]& & 4 \ar@{--}[rr] \ar[ru] & & 1.}
\]
Then we have $2$-APR tilted algebras as follows.

\[
\xymatrix@C10pt@R10pt{
 & & 3 \ar[rd]& & \\
 & 5\ar[ru] \ar@{--}[rr] \ar[rd]& & 2\ar[rd] &\overset{T_1}{\Longrightarrow} \\
6 \ar@{--}[rr]\ar[ru]& & 4 \ar@{--}[rr] \ar[ru] & & 1}
\xymatrix@C10pt@R10pt{
 & & 3 \ar[rd]& & \\
 & 5\ar[ru] \ar@{--}[rr] \ar[rd]& & 2\ar@{--}[rd] & \overset{T_2}{\Longrightarrow}\\
6 \ar@{--}[rr]\ar[ru]& & 4 \ar[ru] & & 1\ar[ll] }
\xymatrix@C10pt@R10pt{
 & & 3 \ar@{--}[rd]& & \\
 & 5\ar[ru]\ar[rd]& & 2\ar[rd] \ar[ll]  &\overset{T_3}{\Longrightarrow} \\
6 \ar@{--}[rr]\ar[ru]& & 4 \ar@{--}[ru] & & 1\ar[ll]}
\]
\[
\xymatrix@C10pt@R10pt{
 & & 3 \ar[rd]& & \\
 & 5\ar@{--}[ru]\ar[rd]& & 2\ar[rd] \ar[ll]  &\overset{T_4}{\Longrightarrow} \\
6 \ar@{--}[rr]\ar[ru]& & 4 \ar@{--}[ru] & & 1\ar[ll]}
\xymatrix@C10pt@R10pt{
 & & 3 \ar[rd]& & \\
 & 5\ar@{--}[ru]\ar@{--}[rd]& & 2\ar[rd] \ar[ll]  &\overset{T_5}{\Longrightarrow} \\
6 \ar[ru]& & 4 \ar[ru]\ar[ll] & & 1\ar@{--}[ll]}
\xymatrix@C10pt@R10pt{
 & & 3 \ar[rd]& & \\
 & 5\ar[ru]\ar[rd]& & 2\ar[rd] \ar@{--}[ll]  &\overset{}{} \\
6 \ar@{--}[ru]& & 4 \ar[ru]\ar[ll] & & 1.\ar@{--}[ll]}
\]
where all relations are commutative relations and zero relations for each small half
square.

\end{itemize}
\end{exam}

See \cite{IO1} for more examples. 


\section{Cluster-indecomposable modules and their properties}

\subsection{Cluster-roots}\label{cluster-roots}

The aim of this subsection is to show that the dimension vectors of cluster-indecomposable modules give roots of the Euler form. This property plays a crucial role in this paper.
Throughout this subsection, let $\Lambda=KQ/(R)$ be an $n$-representation-finite algebra.

First we give the following lemma, which is well-known for $n=1$ (see for example \cite[VIII. Proposition 2.2]{ARS}).
\begin{lemm}\label{tau=c}
Let $\Phi$ be the Coxeter transformation of $\La$ and
$X$ be a cluster-indecomposable $\Lambda$-module.
\begin{itemize}
\item[(1)]If $X$ is non-projective, then we have ${\dd}(\tau_n X) = \Phi({\dd}X).$
\item[(2)]If $X$ is non-injective, then we have ${\dd}(\tau_n^- X) = \Phi^{-1}({\dd}X).$
\end{itemize}
\end{lemm}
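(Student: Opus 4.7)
The plan is to derive both parts as more or less immediate consequences of Proposition \ref{cluster proper}(6), which identifies $\tau_n$ with $\nu_n$ on $\add(M/\Lambda)$ and $\tau_n^-$ with $\nu_n^{-1}$ on $\add(M/D\Lambda)$, together with the observation stated just before Proposition \ref{cluster proper} that $\Phi$ implements the action of $\nu_n$ on $K_0(\DD^b(\mod\Lambda))$. Under the standard identification of $K_0(\mod\Lambda)$ with $\mathbb{Z}^l$ via dimension vectors, the lemma will become an equality of elements of $\mathbb{Z}^l$.

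For part (1), I would start by noting that a cluster-indecomposable non-projective module $X$ lies in $\add(M/\Lambda)$. Proposition \ref{cluster proper}(6) then forces $\nu_n X$ to be concentrated in cohomological degree $0$, with $\nu_n X \cong \tau_n X$ in $\mod\Lambda$. Passing to classes in $K_0(\DD^b(\mod\Lambda))$ and applying the isomorphism $K_0(\mod\Lambda) \cong \mathbb{Z}^l$ sending a module to its dimension vector, the equality $[\nu_n X] = \Phi\,[X]$ immediately rewrites as $\dd(\tau_n X) = \Phi(\dd X)$. Part (2) is formally dual: for $X$ cluster-indecomposable non-injective, $X \in \add(M/D\Lambda)$, so $\nu_n^{-1} X \cong \tau_n^- X$ sits in degree $0$, and the same argument with $\Phi$ replaced by $\Phi^{-1}$ yields $\dd(\tau_n^- X) = \Phi^{-1}(\dd X)$.

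The one bookkeeping point that deserves care—rather than a genuine obstacle—is verifying that the matrix $\Phi=(-1)^n C_\Lambda^t C_\Lambda^{-1}$ from Definition \ref{C matrix} really does realise the $K_0$-action of $\nu_n$. This is checked on the basis of indecomposable projectives: $\nu_n P_i = \nu P_i[-n] = I_i[-n]$ has class $(-1)^n [I_i]$ in $K_0(\DD^b(\mod\Lambda))$, and this matches $\Phi(\dd P_i) = (-1)^n C_\Lambda^t C_\Lambda^{-1}(\dd P_i) = (-1)^n \dd I_i$ by Lemma \ref{Euler form}(1). The sign $(-1)^n$ built into the paper's convention, flagged in the remark after Definition \ref{C matrix}, is precisely what absorbs the shift $[-n]$ in $\nu_n = \nu\circ[-n]$ and allows Lemma \ref{tau=c} to be stated without any sign corrections. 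Once this identification is in hand, no further computation is needed—both parts of the lemma follow purely formally from Proposition \ref{cluster proper}(6).
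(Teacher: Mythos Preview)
Your proof is correct and follows essentially the same approach as the paper: both use that $\Phi$ realises the action of $\nu_n$ on $K_0(\DD^b(\mod\Lambda))$ together with Proposition \ref{cluster proper}(6) to identify $\nu_n X$ with $\tau_n X$ for non-projective cluster-indecomposable $X$. Your additional verification that $\Phi$ really does compute the $K_0$-action of $\nu_n$ on the basis of projectives is a welcome elaboration of a point the paper simply asserts.
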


\begin{proof}
We only prove (1); the proof of (2) is similar.
Since $\Phi$ gives the action of $\nu_n$ on the Grothendieck group $K_0(\DD^b(\mod\Lambda))$,
 we have
${\dd}(\nu_n X)=\Phi({\dd}X)$.
On the other hand, we have $\nu_n X\cong\tau_n X$ by Proposition \ref{cluster proper} (6).
Thus we obtain
${\dd}(\tau_n X)={\dd}(\nu_n X)=\Phi({\dd}X).$
\end{proof}

By using Lemma \ref{tau=c}, we have the following result.

\begin{Theorem}\label{result1}
Let $\Lambda=KQ/(R)$ be an $n$-representation-finite algebra and
$q_\Lambda$ be the Euler form of $\Lambda$.
Then, for any cluster-indecomposable $\Lambda$-module $X$, the dimension vector ${\dd} X$ is a positive root of $q_\Lambda$.
\end{Theorem}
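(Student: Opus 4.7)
The plan is to reduce to the projective case using Proposition \ref{cluster proper} (1), then transport through the Coxeter transformation using Lemma \ref{tau=c} and the $\Phi$-invariance of the Euler form from Lemma \ref{Euler form} (2).

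By Proposition \ref{cluster proper} (1), any cluster-indecomposable $X$ has the form $X \cong \tau_n^{-i} P$ for some integer $i \geq 0$ and some indecomposable projective $P = P_j$. For each $0 \leq k < i$, the module $\tau_n^{-k} P$ is an indecomposable direct summand of the cluster tilting module $M \cong \bigoplus_{s \geq 0} \tau_n^{-s}(\Lambda)$, hence cluster-indecomposable; moreover it is non-injective, because applying $\tau_n^-$ once more yields the nonzero module $\tau_n^{-(k+1)} P$. Thus Lemma \ref{tau=c} (2) applies at every step, and iterating gives $\dd X = \Phi^{-i}(\dd P)$.

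Next, by Lemma \ref{Euler form} (2) the form $q_\Lambda$ is $\Phi$-invariant, so $q_\Lambda(\dd X) = q_\Lambda(\Phi^{-i} \dd P) = q_\Lambda(\dd P)$. Applying Lemma \ref{Euler form} (3) and using that $P$ is projective (so $\Ext^k_\Lambda(P,P) = 0$ for $k > 0$), we get
\[
q_\Lambda(\dd P) = \dim \End_\Lambda(P_j) = \dim e_j \Lambda e_j = 1,
\]
where the last equality holds because $Q$ is acyclic, so the only path from $j$ to itself is the trivial one. Hence $\dd X$ is a root of $q_\Lambda$, and positivity is automatic since $X$ is a nonzero $\Lambda$-module.

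The only delicate point is checking that Lemma \ref{tau=c} (2) is applicable at every iteration, but this is built into the description of $M$ in Proposition \ref{cluster proper} (1). By symmetry, one could equivalently start from the injective end by writing $X \cong \tau_n^i I$ and using Lemma \ref{tau=c} (1) to get $\dd X = \Phi^i (\dd I)$, with $q_\Lambda(\dd I) = 1$ computed dually from $\id_\Lambda I = 0$.
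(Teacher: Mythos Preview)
Your proof is correct and is essentially the dual of the paper's own argument: the paper writes $X \cong \tau_n^j I_i$ and applies Lemma \ref{tau=c} (1) to obtain $\dd X = \Phi^j(\dd I_i)$, then uses injectivity of $I_i$ and acyclicity of $Q$ to compute $q_\Lambda(\dd I_i)=1$, whereas you start from the projective end. You even note this symmetry yourself; the only difference is that you are slightly more explicit about why the hypothesis of Lemma \ref{tau=c} is met at each iteration, which is a welcome clarification.
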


\begin{proof}
By Proposition \ref{cluster proper} (1),
there exist non-negative integers $j$ and $i \in Q_0$ such that
$$X \cong \tau_n^jI_i.$$
Then, by Lemma \ref{tau=c}, we have
$$ {\dd} X = \Phi^j({\dd} I_i).$$
Therefore, we have
\begin{eqnarray*}
q_\Lambda({\dd} X)&=&q_\Lambda(\Phi^j({\dd} I_i)) \\
&=&q_\Lambda({\dd} I_i) \ \ \ \ \ \ \ \ \ \ \ \ \ \ \ \ \ \ \ \ \ \ \ \  \ \ \ \ \ \ (\text{Lemma}\ \ref{Euler form}\ (2))\\
&=&\sum_{j \geq 0}(-1)^j\dim\Ext^j_\Lambda(I_i,I_i) \ \ \ \ \ \ \ \ \ \ (\text{Lemma}\ \ref{Euler form}\ (3))\\
&=&\dim\Hom_\Lambda(I_i,I_i) \\
&=&1\ \ \ \ \ \ \ \ \ \ \ \ \ \ \ \ \ \ \ \ \ \ \ \ \ \ \ \ \  \ \  \ \ \ \ \ \ \ \ \ \ \ \ \ \ (Q\ \text{is acyclic}).
\end{eqnarray*}
Thus ${\dd} X$ is a root of $q_\Lambda$.
\end{proof}

By this theorem, we use the following terminology.

\begin{defi}\label{cluster-root}
Let $\Lambda$ be an $n$-representation-finite algebra.
We call a root $x \in \mathbb{Z}^l$ \emph{cluster-root} if there is a cluster-indecomposable module $X$ such that ${\dd} X = x$.
\end{defi}

By Proposition \ref{cluster proper} (1) and Lemma \ref{tau=c}, we can easily calculate cluster-roots by applying the Coxeter transformation to the dimension vectors of indecomposable injective (or projective) modules. 

\begin{exam}\label{exam1}
Let $\Lambda$ be the following algebra with commutative relations and zero relations for each small half
square
\[
\xymatrix@C10pt@R10pt{
 & & 3 \ar[rd]& & \\
 & 5\ar[ru] \ar@{--}[rr] \ar[rd]& & 2\ar[rd] & \\
6 \ar@{--}[rr]\ar[ru]& & 4 \ar@{--}[rr] \ar[ru] & & 1.
}
\]
Then the Coxeter transformation $\Phi$ of $\La$ is the matrix
\[
\Phi=C_\Lambda^t C_\Lambda^{-1}=\left( \begin{array}{cccccc}
1 & -1 &0 & 1 & 0 &0 \\
1 & 0 &-1 & 0 & 1 &0 \\
1 & 0 &0 &0 &0&0 \\
0 & 1 &-1 &0&0&1 \\
0 & 1 &0 &0&0&0 \\
0 & 0 &1 &0&0&0
\end{array} \right)_.\]
Then the cluster-roots are given as follows

$$\dd I_1=\left( \begin{smallmatrix}
1  \\
1 \\
1 \\
0 \\
0 \\
0
\end{smallmatrix}\right),
\dd I_2=\left( \begin{smallmatrix}
0  \\
1 \\
1 \\
1 \\
1 \\
0
\end{smallmatrix}\right),
\dd I_3=\left( \begin{smallmatrix}
0  \\
0 \\
1 \\
0 \\
1 \\
1
\end{smallmatrix}\right),
\dd I_4=\left( \begin{smallmatrix}
0  \\
0 \\
0 \\
1 \\
1 \\
0
\end{smallmatrix}\right),
\dd I_5=\left( \begin{smallmatrix}
0  \\
0 \\
0 \\
0 \\
1 \\
1
\end{smallmatrix}\right),\dd I_6=\left( \begin{smallmatrix}
0  \\
0 \\
0 \\
0 \\
0 \\
1
\end{smallmatrix}\right),$$ 

$$\dd P_2= \Phi(\dd I_4)=\left( \begin{smallmatrix}
1  \\
1 \\
0 \\
0 \\
0 \\
0
\end{smallmatrix}\right),
\dd P_4=\Phi(\dd I_5)=\left( \begin{smallmatrix}
0  \\
1 \\
0 \\
1 \\
0 \\
0
\end{smallmatrix}\right),$$

$$
\dd S_4=\Phi(\dd I_6)=\left( \begin{smallmatrix}
0  \\
0 \\
0 \\
1 \\
0 \\
0
\end{smallmatrix}\right),
\dd P_1=\Phi^2(\dd I_6)=\left( \begin{smallmatrix}
1  \\
0 \\
0 \\
0 \\
0 \\
0
\end{smallmatrix}\right).$$
\end{exam}

Thus we can easily obtain cluster-roots. Furthermore, as we will show in section \ref{determine}, cluster-indecomposable modules are uniquely determined by their dimension vectors.

For later use, we will show that the Coxeter transformation has finite order.

\begin{prop}\label{finite order}
Let $\Phi$ be the Coxeter transformation of $\Lambda$.
Then 
there exists an integer $d$ such that $\Phi^d={\bf 1}$.
\end{prop}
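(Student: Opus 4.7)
The plan is to exhibit a basis of $\mathbb{Q}^l$ on which $\Phi$ acts as a permutation up to signs, and then iterate the action enough times to make the signs trivial. The two ingredients I will combine are: (i) Lemma~\ref{Euler form}(1), which can be rewritten as $\Phi(\dd P_j) = (-1)^n\,\dd I_j$ for every $j\in Q_0$; and (ii) Lemma~\ref{tau=c}(1) together with Proposition~\ref{cluster proper}(1), which, applied iteratively along the chain of cluster-indecomposable modules $I_i, \tau_n I_i, \ldots, \tau_n^{m_i}I_i \cong P_{\sigma(i)}$, yields $\Phi^{m_i}(\dd I_i) = \dd P_{\sigma(i)}$. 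Chaining (ii) with (i) produces the key recursion
$$\Phi^{m_i+1}(\dd I_i) = (-1)^n\,\dd I_{\sigma(i)}.$$

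Since $\sigma$ is a bijection of the finite set $Q_0$, it has some finite order $r$. Iterating the recursion around the $\sigma$-orbit of $i$ a total of $2r$ times gives
$$\Phi^{N_i}(\dd I_i) = (-1)^{2rn}\,\dd I_i = \dd I_i,\qquad\text{where } N_i := \sum_{j=0}^{2r-1}(m_{\sigma^j(i)}+1).$$
Letting $d$ be any common multiple of $\{N_i : i\in Q_0\}$, I obtain $\Phi^d(\dd I_i) = \dd I_i$ for every $i\in Q_0$. Because $\Lambda$ has finite global dimension, $C_\Lambda$ is invertible; hence its transpose (whose rows are the $\dd I_i$, using $I_i \cong D(\Lambda e_i)$) is invertible, so $\{\dd I_i\}_{i\in Q_0}$ is a $\mathbb{Q}$-basis of $\mathbb{Q}^l$. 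It follows that $\Phi^d = \mathbf{1}$.

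The argument is essentially bookkeeping and I do not anticipate any serious obstacle. The only delicate point is the sign $(-1)^n$ picked up each time the $\Phi$-orbit crosses from an indecomposable projective to the corresponding indecomposable injective; this is precisely what is neutralized by iterating around the $\sigma$-orbit an even number of times.
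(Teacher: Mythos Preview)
Your proof is correct and entirely self-contained, using only Lemma~\ref{Euler form}, Lemma~\ref{tau=c}, and Proposition~\ref{cluster proper}. The paper, by contrast, invokes an external result of Herschend--Iyama \cite[Theorem~1.1]{HI1}: every $n$-representation-finite algebra is twisted fractionally Calabi--Yau, so $\nu_n^s\Lambda\cong\Lambda[t]$ in $\DD^b(\mod\Lambda)$ for some integers $s,t$, and the claim follows by reading this off on the Grothendieck group.

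Conceptually the two arguments are not so far apart: both ultimately show that $\Phi$ permutes (up to sign) a basis of $\mathbb{Z}^l$ given by dimension vectors of indecomposable projectives or injectives. The paper gets this permutation in one stroke from the Calabi--Yau property, whereas you build it by hand from the combinatorics of the $\tau_n$-orbits already described in Proposition~\ref{cluster proper}(1). Your route is more elementary and avoids importing a substantial theorem; the paper's route is shorter on the page and highlights the structural reason (fractional Calabi--Yau-ness) behind the finiteness. One small cosmetic remark: your $N_i$ in fact depends only on the $\sigma$-orbit of $i$, so you could take $d$ to be the least common multiple over orbits rather than over vertices, but this changes nothing in the argument.
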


\begin{proof}
Since the global dimension of $\Lambda$ is finite,
the dimension vectors of the complete set $\{\dd P_1,\ldots,\dd P_l \}$ of the indecomposable projective modules generate the canonical basis of the free group $\mathbb{Z}^l$.
Therefore it is enough to show that there exists an integer $d$ such that $\Phi^d(\dd P_i) =\dd P_i$ for any $i\in Q_0$.

By \cite[Theorem 1.1]{HI1}, $\Lambda$ is a twisted fractionally Calabi-Yau algebra. 
Therefore there exist integers $s,t$ such that $\nu_n^s\Lambda\cong\Lambda[t]$ in $\DD^b(\mod\Lambda)$ (see \cite[Proposition 4.2]{HI1}). 
Thus, there exists an integer $d$ such that $\Phi^d(\dd P_i) =\dd P_i$ for any $i\in Q_0$.
\end{proof}


\subsection{Reflection transformations}\label{Reflection transformations}

In \cite{BGP}, the authors showed that reflection functors were compatible with the action of the Weyl group in the  Grothendieck group, 
and this property is crucial for their proof of Gabriel's theorem. 
We modify the classical reflection transformations to our setting, and show the compatibility with our reflection functors in a certain subcategory.
This property plays an essential role for establishing relationships between cluster-indecomposable modules and cluster-roots.

\begin{defi}\label{reflection}
Let $\Lambda=KQ/(R)$ be a finite dimensional algebra of global dimension $n$.
Then, for the Euler form $\langle -,-\rangle$ of $\Lambda$ (Definition \ref{C matrix}), the \emph{symmetrized Euler form} $(-,-)$
is defined by
$$ (x,y) :=\frac{1}{2}[\langle x,y\rangle  + \langle y,x\rangle ].$$
For each $i\in Q_0$,
we define $\mathbb{Z}$-linear map given by
$$s_i:\mathbb{Z}^l \to \mathbb{Z}^l,\ s_i(x) := x - 2(x,{\bf e}_i){\bf e}_i,$$
$$ \delta_i:\mathbb{Z}^l \to \mathbb{Z}^l,\ \delta_i(x):=(x_1,\ldots,x_{i-1},(-1)^{n-1} x_i,x_{i+1},\ldots,x_l).$$
Then, we define $t_i:=\delta_i s_i:\mathbb{Z}^l \to \mathbb{Z}^l$ and call this a \emph{reflection transformation}. 
\end{defi}

Next we define reflection functors via $n$-APR tilting modules as follows. 

Let $\Lambda=KQ/(R)$ be an $n$-representation-finite algebra, $k \in Q_0$ be a sink and $T_k$ be the  corresponding $n$-APR tilting module and $\Gamma:=\End_\La(T_k)$. 
We define \emph{reflection functors}
\begin{align*}
\T_k^+&:=\Hom_{\Lambda}(T_k,-):\mod\Lambda\to\mod\Gamma,\\
\T_k^-&:=-\otimes_{\Gamma} T_k:\mod\Gamma\to\mod\Lambda.
\end{align*} 

Note that we have $\T_k^+ X=0$ if and only if $X$ is isomorphic to the simple projective $\Lambda$-module $P_k$.
Moreover, if $X$ is a cluster-indecomposable $\Lambda$-module and $X\ncong P_k$, we have $\T_k^-\T_k^+ (X)\cong X$ by Corollary \ref{nearly}.

Then we give a relationship between the reflection functors and reflection transformations as follows. Here we use the notations $\FF_0$ and $\XX_0$ as defined in section \ref{pre n-apr}.

\begin{prop}\label{kakan1}
Let $\Lambda =KQ/(R)$ be an $n$-representation-finite algebra, $k \in Q_0$ be a sink and $T_k$ be the corresponding $n$-APR tilting module. Let $\Gamma:=\End_\La(T_k)$.
\begin{itemize}
\item[(1)]
Let $X$ be an indecomposable $\Lambda$-module. If $X \in \FF_0(T_k)$, then ${\T}_k^+X$ is an indecomposable $\Gamma$-module and we have ${\dd} ({\T}_k^+X) =t_k({\dd} X)$.
\item[(2)]
Let $Y$ be an indecomposable $\Gamma$-module. If $Y \in  \XX_0(T_k)$, then ${\T}_k^-Y$ is an
indecomposable $\Lambda$-module and we have ${\dd}({\T}_k^-Y) =t_k^{-1}({\dd} Y)$.
\end{itemize}
\end{prop}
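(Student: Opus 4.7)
The plan is to reduce both parts of the proposition to a local computation at vertex $k$, using the tilting theorem (Lemma \ref{tilt theorem}) for indecomposability, and a careful interplay between the Euler form identities (Lemma \ref{Euler form}), the $n$-Auslander-Reiten translation formula (Lemma \ref{tau=c}), and the definition of $t_k$ for the dimension vector formula.

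\textbf{Indecomposability.} For (1), since $X\in\FF_0(T_k)$, we may apply Lemma \ref{tilt theorem}: the functor $\F_0=\Hom_\Lambda(T_k,-)=\T_k^+$ restricts to an equivalence $\FF_0\to\XX_0$, hence sends indecomposables to indecomposables. For (2), the quasi-inverse of $\F_0$ restricted to $\XX_0$ is $-\otimes_\Gamma T_k=\T_k^-$ (as explained in Section \ref{pre n-apr}), so it also preserves indecomposability.

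\textbf{Dimension vector for (1).} Decompose $T_k=(\Lambda/P_k)\oplus\tau_n^-P_k$ into its indecomposable summands $(T_k)_j=P_j$ for $j\neq k$ and $(T_k)_k=\tau_n^-P_k$. For the primitive idempotent $e_j'\in\Gamma$, one has
\[
\dim\bigl((\T_k^+X)\,e_j'\bigr)=\dim\Hom_\Lambda\bigl((T_k)_j,X\bigr).
\]
For $j\neq k$ this gives $\dim(Xe_j)=(\dd X)_j$, which equals $(t_k(\dd X))_j$ since $s_k$ and $\delta_k$ only modify the $k$-th coordinate. So the whole task reduces to checking the $k$-th coordinate, namely
\[
\dim\Hom_\Lambda(\tau_n^-P_k,X)\;=\;(-1)^{n-1}\bigl[(\dd X)_k-2(\dd X,{\bf e}_k)\bigr].
\]
For the left-hand side, the hypothesis $X\in\FF_0(T_k)$ forces $\Ext^i_\Lambda(\tau_n^-P_k,X)=0$ for $i>0$, so by Lemma \ref{Euler form}(3)
\[
\dim\Hom_\Lambda(\tau_n^-P_k,X)=\langle\dd(\tau_n^-P_k),\dd X\rangle.
\]
Since $\tau_n^-P_k$ is cluster-indecomposable, Lemma \ref{tau=c} gives $\dd(\tau_n^-P_k)=\Phi^{-1}({\bf e}_k)$, and then Lemma \ref{Euler form}(2) yields $\langle\Phi^{-1}({\bf e}_k),\dd X\rangle=(-1)^n\langle\dd X,{\bf e}_k\rangle$. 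For the right-hand side, because $P_k=S_k$ is simple projective one has $(\dd X)_k=\dim\Hom_\Lambda(P_k,X)=\langle{\bf e}_k,\dd X\rangle$; combining this with the definition $2(\dd X,{\bf e}_k)=\langle\dd X,{\bf e}_k\rangle+\langle{\bf e}_k,\dd X\rangle$ yields $(\dd X)_k-2(\dd X,{\bf e}_k)=-\langle\dd X,{\bf e}_k\rangle$. Multiplying by $(-1)^{n-1}$ produces $(-1)^n\langle\dd X,{\bf e}_k\rangle$, matching the left-hand side.

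\textbf{Part (2).} Rather than redo the computation dually, I would observe that on $\FF_0(T_k)$ the composition $\T_k^-\T_k^+$ is the identity and on $\XX_0(T_k)$ the composition $\T_k^+\T_k^-$ is the identity (by Lemma \ref{tilt theorem}). Given $Y\in\XX_0(T_k)$, set $X:=\T_k^-Y\in\FF_0(T_k)$; then $\dd Y=\dd(\T_k^+X)=t_k(\dd X)$ by (1), hence $\dd(\T_k^-Y)=\dd X=t_k^{-1}(\dd Y)$, as required.

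The main obstacle is bookkeeping with the signs $(-1)^n$ and $(-1)^{n-1}$: the correction factor $\delta_k$ in the definition of $t_k$ is precisely what is needed to match the sign produced by Lemma \ref{Euler form}(2) when one passes from $\langle\Phi^{-1}({\bf e}_k),-\rangle$ to $\langle-,{\bf e}_k\rangle$. Getting this sign right is the whole content of the $k$-th coordinate identity, and it explains why the classical reflection $s_k$ has to be twisted by $\delta_k$ in the higher-dimensional setting.
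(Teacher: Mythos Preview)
Your proof is correct and follows essentially the same route as the paper: use the tilting equivalence $\FF_0\simeq\XX_0$ for indecomposability, observe that coordinates away from $k$ are unchanged, and at $k$ combine $X\in\FF_0(T_k)$ with Lemma~\ref{Euler form}(2),(3) and Lemma~\ref{tau=c} to identify $\dim\Hom_\Lambda(\tau_n^-P_k,X)$ with $(t_k(\dd X))_k$. Two small remarks: when invoking Lemma~\ref{tau=c} the hypothesis to check is that $P_k$ (not $\tau_n^-P_k$) is cluster-indecomposable and non-injective, which holds since $P_k\in\add\Lambda$ and $\operatorname{inj.dim}P_k=n$; and your deduction of (2) from (1) via $\T_k^+\T_k^-\cong\mathrm{id}$ on $\XX_0$ is a clean shortcut compared with redoing the dual computation.
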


\begin{proof}
We only prove (1); the proof of (2) is similar.
Since the functor ${\T}_k^+$ gives an equivalence between
$\FF_0(T_k)$ and $\XX_0(T_k)$ by Lemma \ref{tilt theorem}, ${\T}_k^+X$ is an indecomposable $\Gamma$-module.
We denote by $({\dd}X)_i$ the $i$-th coordinate of the dimension vector ${\dd}X$ for $i\in Q_0$.
We denote by $\epsilon_i$ the idempotent of $\Gamma$ corresponding to $i\in Q_0$.

First assume that $i \neq k$. 
By Lemma \ref{tilt theorem}, we have

\begin{eqnarray*}
({\dd}({\T}_k^+X))_i&=&\dim\Hom_\Gamma(\epsilon_i\Gamma,{\T}_k^+X)\\
&=&\dim\Hom_\Gamma(\Hom_\Lambda(T_k,e_i\Lambda),\Hom_\Lambda(T_k,X))\\
&=& \dim\Hom_\Lambda(e_i\Lambda,X)\\
&=& (\dim X)_i\\
&=&( t_k(\dim X))_i.
\end{eqnarray*}

Next assume that $i = k$. Similarly we have

\begin{eqnarray*}
({\T}_k^+X)e_k &\cong&\Hom_\Gamma( \epsilon_k\Gamma,{\T}_k^+X)\\
&\cong& \Hom_\Gamma(\Hom_\Lambda(T_k, \tau_n^{-}P_k), \Hom_\Lambda(T_k, X))\\
&\cong& \Hom_\Lambda(\tau_n^{-}P_k , X).
\end{eqnarray*}

On the other hand, we have
\begin{eqnarray*}
(s_k({\dd X}))_k&=&\dim Xe_k-[ \langle {\dd X},{\bf e}_k\rangle+ \langle {\bf e}_k,{\dd X}\rangle]\\
&=& - \langle {\dd X},{\bf e}_k\rangle.
\end{eqnarray*}

Then, we obtain
\begin{eqnarray*}
\langle{\dd X},{\bf e}_k\rangle&=&(-1)^n\langle \Phi^{-1}({\bf e}_k),{\dd X}\rangle\  \ \ \ \  \ \ \ \ \ \ \ \ \ \ \ \ \ \ \ \ \ \ (\text{Lemma}\ \ref{Euler form}\ (2))\\
&=& (-1)^n\langle \dd(\tau_n^-P_k),{\dd X}\rangle\ \ \ \ \ \ \ \ \ \ \ \ \ \ \ \ \ \ \ \ \ \ \ \ (\text{Lemma}\ \ref{tau=c})\\
&=& (-1)^n \sum_{j \geq 0}(-1)^j\dim\Ext^j_\Lambda(\tau_{n}^-P_k,X)\ \ \ \ \ \ \ (\text{Lemma}\ \ref{Euler form}\ (3)).
\end{eqnarray*}

Since $X\in\FF_0(T_k)$, we have
$\sum_{j \geq 0}(-1)^j\Ext^j_\Lambda(\tau_{n}^-P_k,X)=\Hom_\Lambda(\tau_{n}^-P_k,X).$
Hence we obtain 
\begin{eqnarray*}
(t_k({\dd X}))_k&=&(-1)^{n-1}(s_k({\dd X}))_k\\
&=& (-1)^{n-1}(-\langle{\dd X},{\bf e}_k\rangle)\\
&=& (-1)^{n-1}\{(-1)(-1)^n \dim\Hom_\Lambda(\tau_{n}^-P_k,X)\}\\
&=& \dim\Hom_\Lambda(\tau_{n}^-P_k,X).
\end{eqnarray*}
Thus we have shown that ${\dd} ({\T}_k^+ X) = t_k({\dd} X).$
\end{proof}
 
As an immediate consequence, we have the following. 
\begin{cor}\label{kakan2}
Let $\Lambda =KQ/(R)$ be an $n$-representation-finite algebra, $k \in Q_0$ be a sink and $T_k$ be the $n$-APR tilting module. Let $\Gamma:=\End_\Lambda(T_k)$.
\begin{itemize}
\item[(1)]
Let $X$ be a cluster-indecomposable $\Lambda$-module. If $X \ncong P_k$, then ${\T}_k^+X$
is an indecomposable $\Gamma$-module and we have ${\dd}({\T}_k^+X) =t_k({\dd} X).$
\item[(2)]
Let $Y$ be a cluster-indecomposable $\Gamma$-module. If $Y \ncong I_k$, then ${\T}_k^-Y$
is an indecomposable $\Lambda$-module and we have ${\dd}({\T}_k^-Y) =t_k^{-1}({\dd} Y).$
\end{itemize}
\end{cor}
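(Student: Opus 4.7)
The plan is to derive this corollary as an immediate consequence of Proposition \ref{kakan1} together with Lemma \ref{CT and T} (and its dual for part (2)). The only thing to verify is that a cluster-indecomposable module always lies in the appropriate torsion-free class $\FF_0(T_k)$ or $\XX_0(T_k)$ once one excludes the obvious exception.

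For part (1), let $X$ be a cluster-indecomposable $\Lambda$-module with $X\ncong P_k$. By Lemma \ref{CT and T}, we immediately have $X\in\FF_0(T_k)$. Since $X$ is in particular indecomposable, Proposition \ref{kakan1} (1) applies and yields that $\T_k^+X$ is an indecomposable $\Gamma$-module with $\dd(\T_k^+X)=t_k(\dd X)$, which is exactly what we need.

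For part (2), I need a dual of Lemma \ref{CT and T}: if $Y$ is a cluster-indecomposable $\Gamma$-module with $Y\ncong I_k$, then $Y\in\XX_0(T_k)$. To see this, first note that by Proposition \ref{n-apr} (3) the simple injective $\Gamma$-module $I_k$ corresponds to $P_k$ under the tilting equivalence. Using Corollary \ref{nearly}, any cluster-indecomposable $\Gamma$-module $Y$ with $Y\ncong I_k$ is of the form $\T_k^+X=\Hom_\Lambda(T_k,X)$ for some cluster-indecomposable $\Lambda$-module $X\ncong P_k$; by part (1), $X\in\FF_0(T_k)$, and the equivalence $\F_0:\FF_0(T_k)\to\XX_0(T_k)$ of Lemma \ref{tilt theorem} forces $Y\in\XX_0(T_k)$. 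Now Proposition \ref{kakan1} (2) applies to give the conclusion, and $\T_k^-Y\cong X$ is indecomposable with $\dd(\T_k^-Y)=t_k^{-1}(\dd Y)$.

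No step here is a genuine obstacle, since the real work has been done in Lemma \ref{CT and T} and Proposition \ref{kakan1}; the only delicate point is the dual dichotomy needed in part (2), which is obtained by transferring across the tilting equivalence rather than by a separate direct computation.
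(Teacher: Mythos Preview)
Your proof is correct and, for part (1), identical to the paper's: reduce to Proposition \ref{kakan1} by invoking Lemma \ref{CT and T}. For part (2) the paper simply says ``similar'', meaning one would dualize Lemma \ref{CT and T} directly (using that $I_k$ is simple injective over $\Gamma$); you instead obtain $Y\in\XX_0(T_k)$ by pulling $Y$ back through the equivalence of Corollary \ref{nearly} and Lemma \ref{tilt theorem}. Both routes are equally short and valid; yours has the small advantage of avoiding a separate dual computation, at the cost of quoting Corollary \ref{nearly}. One cosmetic point: when you write ``by part (1), $X\in\FF_0(T_k)$'', you mean the argument used in part (1) (i.e.\ Lemma \ref{CT and T}), not the conclusion of part (1) itself.
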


\begin{proof}
We only prove (1); the proof (2) is similar. By Proposition \ref{kakan1}, it is enough to show $X\in\FF_0(T_k)$.
We have shown this in Lemma \ref{CT and T}.
\end{proof}

Next lemma is important in section \ref{criterion}. 
We note that, if $\gl\Lambda\leq 2$, then $q_\Lambda$ is given by the \emph{Tits form} \cite{B2} and it can be characterized by a quiver with relations.
 
\begin{lemm}\label{t_i}
Let $\Lambda =KQ/(R)$ be a $2$-representation-finite algebra with a minimal set $R$ of relations, $k \in Q_0$ be a sink and $T_k$ be the $2$-APR tilting module. Let $\Gamma:=\End_\Lambda(T_k)$.
\begin{itemize}
\item[(1)]We have $q_\Gamma(x)=q_\Lambda(\delta_kx)$.
\item[(2)]If x is a root of $q_\Lambda$ , then $t_kx$ is a root of $q_\Gamma$.
\end{itemize}
\end{lemm}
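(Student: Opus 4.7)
The plan is to transport both quadratic forms onto a common copy of $\mathbb{Z}^l$ via the derived equivalence induced by $T_k$ and then identify the resulting isomorphism with the reflection $t_k$. By Lemma \ref{tilt theorem}, $\F=\RHom_\La(T_k,-)$ gives a triangle equivalence $\DD^b(\mod\La)\simeq\DD^b(\mod\Gamma)$, and hence a $\mathbb{Z}$-linear isomorphism $\phi:\mathbb{Z}^l=K_0(\mod\La)\to K_0(\mod\Gamma)=\mathbb{Z}^l$. Since the pairing $\langle [X],[Y]\rangle=\sum_i(-1)^i\dim\Hom_{\DD^b}(X,Y[i])$ depends only on the triangulated category, $\phi$ preserves the Euler form: $\langle\phi x,\phi y\rangle_\Gamma=\langle x,y\rangle_\La$ for all $x,y$, and in particular $q_\Gamma(\phi x)=q_\La(x)$.

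The heart of the argument is to show $\phi=t_k$. Because $\gl\La$ is finite, $\det C_\La=\pm 1$, so $\{\dd P_i\}_{i\in Q_0}$ is a $\mathbb{Z}$-basis of $\mathbb{Z}^l$ and it suffices to check the equality on this basis. For $i\neq k$, $P_i$ is a direct summand of $\La/P_k\subset T_k$, so $P_i\in\FF_0(T_k)$, and Proposition \ref{kakan1} gives $\phi(\dd P_i)=\dd(\T_k^+P_i)=\dd P_i^\Gamma=t_k(\dd P_i)$. For $i=k$, the sink condition yields $P_k=S_k$ and $\dd P_k={\bf e}_k$; the identity $({\bf e}_k,{\bf e}_k)=\langle\dd P_k,\dd P_k\rangle_\La=1$ together with $(-1)^{n-1}=-1$ shows that $t_k({\bf e}_k)=\delta_k(-{\bf e}_k)={\bf e}_k$. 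To evaluate $\phi({\bf e}_k)$ I use that $\F$, as a derived equivalence, commutes with the Nakayama (Serre) functor, so on $K_0$ we have $\phi\circ\Phi_\La=\Phi_\Gamma\circ\phi$. Since the quasi-inverse $-\Lotimes_\Gamma T_k$ satisfies $\epsilon_k\Gamma\Lotimes_\Gamma T_k\cong\epsilon_kT_k=\tau_n^-P_k$, I obtain $\phi(\dd(\tau_n^-P_k))=\dd P_k^\Gamma$; combined with $\dd(\tau_n^-P_k)=\Phi_\La^{-1}({\bf e}_k)$ from Lemma \ref{tau=c}, this gives $\phi({\bf e}_k)=\Phi_\Gamma(\dd P_k^\Gamma)=\dd I_k^\Gamma={\bf e}_k$, the last equality holding because $k$ is a source of $\Gamma$ by Proposition \ref{n-apr}(3), so $I_k^\Gamma=S_k^\Gamma$. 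Thus $\phi$ and $t_k$ agree on the basis and hence as linear maps on $\mathbb{Z}^l$.

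Once $\phi=t_k$ is established, both parts are immediate. For (1), $q_\Gamma(y)=q_\La(\phi^{-1}y)=q_\La(t_k^{-1}y)=q_\La(s_k\delta_ky)=q_\La(\delta_ky)$, where $t_k^{-1}=s_k^{-1}\delta_k^{-1}=s_k\delta_k$ and the last step uses that the reflection $s_k$ preserves $q_\La$ (being the orthogonal reflection of the symmetrized Euler form $(-,-)$). For (2), if $q_\La(x)=1$ then by (1) $q_\Gamma(t_kx)=q_\La(\delta_kt_kx)=q_\La(\delta_k\delta_ks_kx)=q_\La(s_kx)=q_\La(x)=1$. The delicate step of the plan is the identification $\phi({\bf e}_k)={\bf e}_k$, which requires combining the $\nu_n$-equivariance of the derived equivalence with the source/sink interchange for $n$-APR tilting from Proposition \ref{n-apr}(3); everything else is a straightforward calculation with the two reflection pieces $s_k$ and $\delta_k$.
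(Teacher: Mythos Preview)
Your proof is correct but takes a genuinely different route from the paper's. The paper argues combinatorially: since $\gl\Lambda\le 2$ and $\gl\Gamma\le 2$, both $q_\Lambda$ and $q_\Gamma$ are Tits forms, and the explicit description of the quiver and relations of $\Gamma$ from \cite[Theorem 3.11]{IO1} shows that the only terms of $q_\Lambda$ involving $x_k$ change sign when passing to $q_\Gamma$; this gives (1) directly, and (2) follows from (1) together with $q_\Lambda(s_kx)=q_\Lambda(x)$.

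Your argument is more structural: you identify the $K_0$-isomorphism $\phi$ induced by the derived equivalence $\RHom_\Lambda(T_k,-)$ with $t_k$, checking it on the basis $\{\dd P_i\}$ via Proposition~\ref{kakan1} for $i\neq k$ and via Serre-functor equivariance (together with Lemma~\ref{tau=c} and Proposition~\ref{n-apr}(3)) for $i=k$. Since any derived equivalence preserves the Euler pairing, this yields $q_\Gamma\circ t_k=q_\Lambda$, from which (1) and (2) follow by the involutivity of $s_k$ and $\delta_k$. This approach has the advantage of proving the stronger statement that $t_k$ \emph{is} the Grothendieck-group shadow of the derived equivalence, and it does not rely on the Tits-form description specific to global dimension $2$; the paper's approach, on the other hand, is more elementary and makes the coefficient-level comparison of the two quadratic forms completely explicit.
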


\begin{proof}
\begin{itemize}
Since $\gl\Lambda\leq 2$ and $\gl\Gamma\leq 2$, we can describe $q_\Lambda$ and $q_\Gamma$ by the Tits forms.
Because $k$ is a sink, we have
$$q_\Lambda(x)=\sum_{i\in Q_0}x_i^2 -  {\displaystyle\sum_{\begin{smallmatrix}a\in Q_1\\
e(a)=k\end{smallmatrix}}} x_{s(a)}x_{k} + {\displaystyle\sum_{\begin{smallmatrix}r\in R\\
e(r)=k\end{smallmatrix}}} x_{s(r)}x_k+\alpha,$$
where $\alpha$ consists of terms without having $x_k$. 
Let $(Q',R')$ be the quiver with a minimal set of relations satisfying  $KQ'/(R')\cong\Gamma$. 
Then by \cite[Theorem 3.11]{IO1} (cf. Proposition \ref{mutation}), we have
\begin{eqnarray*}
q_\Gamma(x)&=& \sum_{i\in Q_0}x_i^2 +  {\displaystyle\sum_{\begin{smallmatrix}r\in R'\\
s(r)=k\end{smallmatrix}}} x_{e(r)}x_{k} - {\displaystyle\sum_{\begin{smallmatrix}a\in Q_1'\\
s(a)=k\end{smallmatrix}}} x_{e(a)}x_k+\alpha\\
&=&\sum_{i\in Q_0}x_i^2 -  {\displaystyle\sum_{\begin{smallmatrix}a\in Q_1\\
e(a)=k\end{smallmatrix}}} x_{s(a)}(-x_{k}) + {\displaystyle\sum_{\begin{smallmatrix}r\in R\\
e(r)=k\end{smallmatrix}}} x_{s(r)}(-x_k)+\alpha.
\end{eqnarray*}
Thus the first statement follows.
Moreover it is well-known that $q_\Lambda(s_ix)=q_\Lambda(x)$ for any $i\in Q_0$ (\cite[VII.Lemma 3.7]{ASS}). 
Then, by the first assertion, we obtain
$q_\Gamma(t_k\bx)=q_\Lambda(\delta_kt_kx)=q_\Lambda(s_kx)=q_\Lambda(x)=1.$
\end{itemize}
\end{proof}

\begin{exam}\label{exam t}
Let $\Lambda$ be the algebra of Example \ref{exam1} and $\Gamma:=\End_\La(T_1)$ (the quiver and relations are given in Example \ref{exam0} (2)).
We have the Euler form of $\La$
$$q_\Lambda(x) = \sum_{i\in Q_0}x_i^2 -x_1x_2 -x_2x_3 -x_2x_4- x_3x_5 - x_4x_5 -x_5x_6 + x_1x_4 +x_2x_5 + x_4x_6.$$ 
Then since $q_\Gamma(x)=q_\La(\delta_1x),$ we have the Euler form of $\Gamma$ 
$$q_\Gamma(x)=\sum_{i\in Q_0}x_i^2 +x_1x_2 -x_2x_3 -x_2x_4- x_3x_5 - x_4x_5 -x_5x_6 - x_1x_4 +x_2x_5 + x_4x_6.$$
For example, ${\bf e}_4$ is one of the roots of $q_\La(x)$. Then for  
the reflection transformation
$t_1=\left(\begin{smallmatrix}
1&-1&0&1&0&0\\
0&1&0&0&0&0\\
0&0&1&0&0&0\\
0&0&0&1&0&0\\
0&0&0&0&1&0\\
0&0&0&0&0&1\\
\end{smallmatrix}\right)$, 
 $t_1{\bf e}_4$ is a root of $q_\Gamma(x).$
\end{exam}

\section{Cluster-roots determine cluster-indecomposable modules}\label{determine}
In this section we investigate relationships between cluster-indecomposable modules and cluster-roots in more detail. In particular, we show that cluster-indecomposable modules can be obtained from simple modules by applying a sequence of reflection functors.
As a corollary, we give a complete description of cluster roots in terms of reflection transformations. Moreover we prove that cluster-indecomposable modules are uniquely determined by their dimension vectors.

Throughout this section, let $\Lambda= KQ/(R)$ be an $n$-representation-finite algebra with an admissible numbering $(1\ldots l)$. As we have seen in subsection \ref{pre n-apr}, we have $n$-APR tilted algebras $\La^1:=\La,\La^2:=\End_{\La^1}(T_1),\ldots,\La^{l+1}:=\End_{\La^{l}}(T_{l})$. 
For $i\in Q_0$, we denote by $t_i$ the reflection transformation associated with $i$ of $\Lambda^i$. 
For $i,j\in Q_0$, we denote by $P_j^{(i)}$ the projective $\Lambda^i$-module corresponding to the vertex $j$. 
We write 
\begin{align*}
\T_i^+&:=\Hom_{\Lambda^i}(T_i,-):\mod\Lambda^i\to\mod\Lambda^{i+1},\\
\T_i^-&:=-\otimes_{\Lambda^{i+1}} T_i:\mod\Lambda^{i+1}\to\mod\Lambda^i,
\end{align*} 
 
and we write
\begin{align*}
\RT_i^+&:=\RHom_{\Lambda^i}(T_i,-):\DD^b(\mod\Lambda^i)\to\DD^b(\mod\Lambda^{i+1}),\\
\LT_i^-&:=-\Lotimes_{\Lambda^{i+1}} T_i:\DD^b(\mod\Lambda^{i+1})\to\DD^b(\mod\Lambda^{i}).
\end{align*}

First we give the following easy observation. 

\begin{lemm}\label{nu comm}
We have isomorphisms
$$\RT_i^+(\nu_n^{-1}P_j^{(i)})\cong\begin{cases}
\nu_n^{-1}P_j^{(i+1)} &(j\neq i)\\
P_j^{(i+1)}&(j=i),
\end{cases}$$
$$\LT_i^-(\nu_n^{-1}P_j^{(i+1)})\cong\begin{cases}
\nu_n^{-1}P_j^{(i)} &(j\neq i)\\
\nu_n^{-1}(\tau_n^-P_j^{(i)})&(j=i). 
\end{cases}$$
\end{lemm}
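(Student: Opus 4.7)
The plan is to reduce all four formulas to a direct computation of $\RT_i^+$ on projectives, and then transport through the Serre functor. The key observation is that $\RT_i^+:\DD^b(\mod\La^i)\to\DD^b(\mod\La^{i+1})$ is a triangulated equivalence with quasi-inverse $\LT_i^-$ (Lemma~\ref{tilt theorem}), and any such equivalence intertwines the Serre functors of the two derived categories. In particular, $\RT_i^+\circ\nu\cong\nu\circ\RT_i^+$ and $\LT_i^-\circ\nu\cong\nu\circ\LT_i^-$; since $[-n]$ commutes with every triangulated functor, the same holds for $\nu_n$ and $\nu_n^{-1}$. Thus it suffices to evaluate $\RT_i^+$ and $\LT_i^-$ on the relevant projectives.

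For this computation, I would write $T_i=\tau_n^-P_i^{(i)}\oplus\La^i/P_i^{(i)}$. The identification $\La^{i+1}=\End_{\La^i}(T_i)$ sends the primitive idempotent $e_j^{(i+1)}$ to the summand $P_j^{(i)}$ for $j\neq i$, and $e_i^{(i+1)}$ to the summand $\tau_n^-P_i^{(i)}$. Since all these summands belong to $\add T_i$, the higher Ext groups $\Ext^{>0}_{\La^i}(T_i,-)$ vanish on them, yielding
\[
\RT_i^+P_j^{(i)}\cong P_j^{(i+1)}\ (j\neq i),\qquad \RT_i^+(\tau_n^-P_i^{(i)})\cong P_i^{(i+1)},
\]
and, by passing to the quasi-inverse, $\LT_i^-P_j^{(i+1)}\cong P_j^{(i)}$ for $j\neq i$ and $\LT_i^-P_i^{(i+1)}\cong\tau_n^-P_i^{(i)}$.

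The only piece still missing is the identification $\nu_n^{-1}P_i^{(i)}\cong\tau_n^-P_i^{(i)}$ inside $\DD^b(\mod\La^i)$. Since $i$ is a sink of the quiver of $\La^i$, $P_i^{(i)}$ is simple projective, and (for $n\geq 1$) its injective dimension equals $n>0$, so $P_i^{(i)}$ is non-injective and therefore lies in $\add(M/D\La^i)$; by Proposition~\ref{cluster proper}~(6) we get $\nu_n^{-1}P_i^{(i)}\cong\tau_n^-P_i^{(i)}$. Combining this with the previous paragraph gives $\RT_i^+(\nu_n^{-1}P_i^{(i)})\cong P_i^{(i+1)}$, and the three remaining formulas follow by applying the $\nu_n^{-1}$-commutativity to the projective calculations above. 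The main point one must verify is that the equivalence $\RT_i^+$ commutes with the Serre functor on objects, which I expect to be the only nontrivial technicality; this is standard, since $\gl\La^i<\infty$ ensures that $\nu$ is a Serre functor on $\DD^b(\mod\La^i)$, and a Serre functor is unique up to natural isomorphism, hence preserved by any triangulated equivalence.
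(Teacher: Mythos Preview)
Your proof is correct and follows essentially the same route as the paper: both arguments compute $\RT_i^+$ on the projective summands of $T_i$ directly, invoke Proposition~\ref{cluster proper}(6) to identify $\nu_n^{-1}P_i^{(i)}\cong\tau_n^-P_i^{(i)}$, and then transport through the commutation $\RT_i^+\circ\nu_n^{-1}\cong\nu_n^{-1}\circ\RT_i^+$. The only difference is that you spell out the justification for this last commutation via uniqueness of the Serre functor, whereas the paper uses it as a known fact without comment; your treatment of the $\LT_i^-$ formulas by passing to the quasi-inverse is likewise a mild streamlining of the paper's ``similar'' dual argument.
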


\begin{proof}
We only prove the first statement; the proof of the second one is similar. 
By the definition, we have 
$P_j^{(i+1)}=\Hom_{\Lambda^{i}}(T_{i},P_j^{(i)})$ for $j\neq i$ and 
$P_{i}^{(i+1)}=\Hom_{\Lambda^{i}}(T_{i},\tau_n^{-}P_{i}^{(i)})$. 
Assume that $j\neq i$. Then we have 
\begin{eqnarray*}
\RT_i^+(\nu_n^{-1}P_j^{(i)})
&\cong &\nu_n^{-1}(\RT_i^+(P_j^{(i)}))\\
&\cong & \nu_n^{-1}(\T_i^+(P_j^{(i)}))\ \ \ (\Ext_{\La^i}^m(T_i,P_j^{(i)})=0, m>0)\\
&\cong & \nu_n^{-1}(P_j^{(i+1)}).
\end{eqnarray*}

Assume that $j=i$. 
Then since $P_i^{(i)}$ is not an injective $\Lambda^i$-module, we have $\nu_n^{-1}P_j^{(i)}\cong\tau_n^-P_j^{(i)}$ by Proposition \ref{cluster proper} (6). 
Thus we obtain
\begin{eqnarray*}
\RT_i^+(\nu_n^{-1}P_j^{(i)})
&\cong &\RT_i^+(\tau_n^{-}P_j^{(i)})\\
&\cong &\T_i^+(\tau_n^{-}P_j^{(i)})\ \ \ (\Ext_{\La^i}^m(T_i,\tau_n^{-}P_j^{(i)})=0, m>0)\\
&\cong & P_j^{(i+1)}.
\end{eqnarray*}
\end{proof}

Then we give the next fundamental property of $n$-representation-finite algebras.  

\begin{prop}\label{rotation}
We have an isomorphism $\Lambda^{l+1}\cong\Lambda$.
\end{prop}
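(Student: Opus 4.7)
The strategy is to exhibit $\Lambda^{l+1}$ as the image of $\nu_n^{-1}\Lambda$ under the composition $F_l:=\RT_l^+\circ\cdots\circ\RT_1^+:\DD^b(\mod\Lambda)\to\DD^b(\mod\Lambda^{l+1})$, from which the isomorphism of algebras $\Lambda^{l+1}\cong\Lambda$ will follow by taking endomorphism rings. More generally, write $F_i:=\RT_i^+\circ\cdots\circ\RT_1^+$; by Lemma \ref{tilt theorem} each $F_i$ is a derived equivalence.

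I would prove by induction on $i\in\{1,\ldots,l\}$ that for every $j\in Q_0$,
\[
F_i(\nu_n^{-1}P_j)\cong\begin{cases}P_j^{(i+1)}&\text{if }j\leq i,\\ \nu_n^{-1}P_j^{(i+1)}&\text{if }j>i,\end{cases}
\]
the Nakayama functor on the right hand side being understood as that of $\Lambda^{i+1}$. The base case $i=1$ is precisely Lemma \ref{nu comm}. For the inductive step, apply $\RT_i^+$ to $F_{i-1}(\nu_n^{-1}P_j)$. If $j\geq i$ the inductive hypothesis gives $F_{i-1}(\nu_n^{-1}P_j)\cong\nu_n^{-1}P_j^{(i)}$, and Lemma \ref{nu comm} applied in $\Lambda^i$ yields $\nu_n^{-1}P_j^{(i+1)}$ when $j>i$ and $P_j^{(i+1)}$ when $j=i$. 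If $j<i$ the inductive hypothesis gives $F_{i-1}(\nu_n^{-1}P_j)\cong P_j^{(i)}$; since $j\neq i$, this projective is a direct summand of $T_i=\tau_n^-P_i^{(i)}\oplus\Lambda^i/P_i^{(i)}$, so $\Ext^m_{\Lambda^i}(T_i,P_j^{(i)})=0$ for $m>0$ and therefore $\RT_i^+(P_j^{(i)})\cong\Hom_{\Lambda^i}(T_i,P_j^{(i)})=P_j^{(i+1)}$, as required.

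Specialising to $i=l$, every $j\in Q_0$ satisfies $j\leq l$, so $F_l(\nu_n^{-1}\Lambda)\cong\Lambda^{l+1}$ in $\DD^b(\mod\Lambda^{l+1})$. Since $\Lambda$ has finite global dimension, $\nu_n$ is an autoequivalence of $\DD^b(\mod\Lambda)$, and consequently
\[
\Lambda^{l+1}\cong\End_{\DD^b(\mod\Lambda^{l+1})}(\Lambda^{l+1})\cong\End_{\DD^b(\mod\Lambda)}(\nu_n^{-1}\Lambda)\cong\End_\Lambda(\Lambda)=\Lambda.
\]
I expect the main technical obstacle to be the inductive step for $j<i$: there $F_{i-1}(\nu_n^{-1}P_j)$ has already been converted into an honest module $P_j^{(i)}$ rather than a complex involving $\nu_n^{-1}$, so Lemma \ref{nu comm} no longer applies directly; one instead has to use the summand structure of the $n$-APR tilting module $T_i$, and this is precisely the place where the admissibility of the numbering $(1,\ldots,l)$ is essential (to guarantee that $i$ remains a sink of $\Lambda^i$).
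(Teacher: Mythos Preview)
Your proposal is correct and follows essentially the same route as the paper: both arguments establish $F_l(\nu_n^{-1}\Lambda)\cong\Lambda^{l+1}$ by iterating Lemma \ref{nu comm} and then conclude by comparing endomorphism rings through the derived equivalences $F_l$ and $\nu_n^{-1}$. Your induction simply makes explicit what the paper compresses into the phrase ``applying Lemma \ref{nu comm} repeatedly'', including the easy case $j<i$ (where $P_j^{(i)}\in\add T_i$) that the paper leaves to the reader.
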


\begin{proof}
By applying Lemma \ref{nu comm} repeatedly, we can show that  
$\Lambda^{l+1}\cong{\RT}_{l}^+\ldots{\RT}_{1}^+(\nu_n^{-1}\Lambda).$ 
Since $\RT_i^+$ and $\nu_n^{-1}$ are equivalences,
we obtain isomorphisms
\begin{eqnarray*}
\Lambda^{l+1} &\cong&\End_{\Lambda^{l+1}}(\Lambda^{l+1})\\
&\cong& \End_{\DD^b(\mod\Lambda)}( {\RT}_{l}^+\ldots{\RT}_{1}^+(\nu_n^{-1}\Lambda))\\
&\cong&   \End_{\DD^b(\mod\Lambda)}( \nu_n^{-1}\Lambda) \\
&\cong& \End_{\DD^b(\mod\Lambda)}( \Lambda) \\
&\cong& \Lambda. 
\end{eqnarray*}
\end{proof}

The next lemma is analogous to \cite[Proposition 2.6]{APR}, which gives a relationship between the indecomposable injective modules over $\La^i$ and $\La^{i+1}$.

\begin{lemm}\label{inj form}
Fix a vertex $i\in Q_0$. 
Let $\{I_1,\ldots, I_l \}$ be a complete set of the indecomposable injective $\La^{i}$-modules. 
Then $\{\T_i^+(I_1),\ldots,\T_i^+(I_{i-1}),\Ext_\La^n(T_i,P_i),\T_i^+(I_{i+1}),\ldots,\T_i^+(I_l) \}$ is a complete set of the indecomposable injective $\La^{i+1}$-modules. 
\end{lemm}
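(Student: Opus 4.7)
The plan is to use the fact that the derived tilting functor $\RT_i^+=\RHom_{\Lambda^i}(T_i,-)$ is an equivalence (Lemma \ref{tilt theorem}) that commutes with the Serre/Nakayama functor $\nu$, since both $\Lambda^i$ and $\Lambda^{i+1}$ have finite global dimension. Combined with the classical identity $\nu P_j^{(i)}=I_j^{(i)}$, this will transport most of the indecomposable injectives of $\mod\Lambda^i$ to those of $\mod\Lambda^{i+1}$, while the remaining one is supplied by Proposition \ref{n-apr} (3).

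For $j\neq i$, I would first verify $I_j^{(i)}\in\FF_0(T_i)$. Writing $I_j^{(i)}\cong\nu P_j^{(i)}$ and invoking Serre duality in $\DD^b(\mod\Lambda^i)$,
\[
\Ext^k_{\Lambda^i}(T_i,I_j^{(i)})\cong D\Hom_{\DD^b(\mod\Lambda^i)}(P_j^{(i)},T_i[-k]),
\]
which vanishes for $k>0$ since $P_j^{(i)}$ and $T_i$ both sit in degree $0$. Hence $\T_i^+(I_j^{(i)})=\RT_i^+(I_j^{(i)})$ is a genuine $\Lambda^{i+1}$-module. Because $j\neq i$, the summand $P_j^{(i)}$ of $T_i=\tau_n^-P_i^{(i)}\oplus\Lambda^i/P_i^{(i)}$ is projective relative to $T_i$, so $\RT_i^+(P_j^{(i)})=\Hom_{\Lambda^i}(T_i,P_j^{(i)})=P_j^{(i+1)}$. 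Commutativity with $\nu$ then yields
\[
\T_i^+(I_j^{(i)})\cong\RT_i^+(\nu P_j^{(i)})\cong\nu\,\RT_i^+(P_j^{(i)})=\nu P_j^{(i+1)}=I_j^{(i+1)},
\]
the indecomposable injective $\Lambda^{i+1}$-module at vertex $j$.

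For the missing vertex $j=i$, I would quote Proposition \ref{n-apr} (3), which identifies $\Ext^n_{\Lambda^i}(T_i,P_i^{(i)})$ with the simple injective $\Lambda^{i+1}$-module $I_i^{(i+1)}$ corresponding to the vertex $i$ (which has become a source of $\Lambda^{i+1}$). Assembling the two cases, the set displayed in the lemma equals $\{I_1^{(i+1)},\dots,I_l^{(i+1)}\}$, a complete set of indecomposable injective $\Lambda^{i+1}$-modules.

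The only point that requires care is the commutativity of $\RT_i^+$ with the Serre functor together with the verification $I_j^{(i)}\in\FF_0(T_i)$ for $j\neq i$; both are formal consequences of Serre duality and the finiteness of global dimension, so the argument is essentially automatic once they are in place. Alternatively, one can sidestep the derived equivalence and compute $\Hom_{\Lambda^i}(T_i,I_j^{(i)})$ directly by decomposing $T_i$ into indecomposable summands and applying the functorial formulas of Proposition \ref{cluster proper} (3), but the Serre-functor route is the cleanest.
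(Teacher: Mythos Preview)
Your argument is correct. The paper's treatment of the case $j\neq i$ is more elementary: rather than invoking Serre duality and the commutation of the derived equivalence with the Serre functor, it computes directly via Hom--tensor adjunction
\[
\T_i^+(I_j)=\Hom_{\Lambda^i}(T_i,D(\Lambda^i e_j))\cong D(T_i\otimes_{\Lambda^i}\Lambda^i e_j)\cong D(T_ie_j),
\]
and then identifies $T_ie_j\cong\Lambda^{i+1}e_j$ by applying the equivalence $\add T_i\simeq\proj\Lambda^{i+1}$ to the isomorphism $T_ie_j\cong\Hom_{\Lambda^i}(e_j\Lambda^i,T_i)$, yielding $\T_i^+(I_j)\cong D(\Lambda^{i+1}e_j)$. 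This route bypasses both the verification $I_j^{(i)}\in\FF_0(T_i)$ and any derived-category machinery. Your approach has the advantage of making the structural reason transparent---triangle equivalences commute with Serre functors, so injectives go to injectives---and would apply verbatim to any tilting module. For the vertex $j=i$ both proofs simply cite Proposition~\ref{n-apr}~(3).
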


\begin{proof}
Let $j\in Q_0$ be a vertex with $j\neq i$.
Then we obtain $\La^{i+1}$-module isomorphisms
\begin{eqnarray*}
\T_i^+(I_j)&\cong&\Hom_{\Lambda^{i}}(T_i,D(\La^i e_j))\\
&\cong& D(T_i\otimes_{\La^i}{\La^i} e_j)\\
&\cong& D((T_i)e_j).
\end{eqnarray*}
Moreover, we get 
\begin{eqnarray*}
(T_i)e_j&\cong&\Hom_{\Lambda^{i}}(e_j\La^{i},T_{i})\\
&\cong& \Hom_{\Lambda^{i+1}}(\Hom_{\Lambda^{i}}(T_i,e_j\La^{i}),\Hom_{\Lambda^{i}}(T_{i},T_{i}))\\
&\cong& \Hom_{\Lambda^{i+1}}(e_j\La^{i+1},\La^{i+1})\\
&\cong&\La^{i+1}e_j.
\end{eqnarray*}
Thus we obtain $\T_i^+(I_j)\cong D(\La^{i+1}e_j)$ as $\La^{i+1}$-modules.
Then, together with Proposition \ref{n-apr}, the statement follows.
\end{proof}

Then we give descriptions of all projective modules and injective modules in terms of reflection functors as follows. 

\begin{prop}\label{proj form}
\begin{itemize}
\item[(1)]Let $P_i$ be a projective $\Lambda$-module. Then $P_i$ is isomorphic to ${\T}_1^-\ldots{\T}_{i-1}^-(S_i)$, where $S_i$ is the simple projective $\Lambda^{i}$-module.
\item[(2)]Let $I_j$ be an injective $\Lambda$-module. Then $I_j$ is isomorphic to ${\T}_l^+\ldots{\T}_{j+1}^+(S_j)$, where $S_j$ is the simple injective $\Lambda^{j+1}$-module.
\end{itemize}
\end{prop}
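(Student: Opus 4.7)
The plan is to prove both parts by induction on the number of reflection functors applied, repeatedly identifying projective (respectively injective) $\La^i$-modules with their image under $\T_i^-$ (respectively $\T_i^+$). The key input is the compatibility of vertex labelings of $\La^i$ and $\La^{i+1}$: for $k\neq i$, the definition $\La^{i+1}=\End_{\La^i}(T_i)$ gives $P_k^{(i+1)}\cong\Hom_{\La^i}(T_i,P_k^{(i)})=\T_i^+(P_k^{(i)})$, since the summand of $T_i$ at vertex $k\neq i$ is exactly $P_k^{(i)}$.

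For part (1), I would establish by descending induction on $j\in\{1,\ldots,i\}$ that $\T_j^-\T_{j+1}^-\cdots\T_{i-1}^-(S_i)\cong P_i^{(j)}$, with the base case $j=i$ being trivial. The inductive step reduces to showing $\T_j^-(P_i^{(j+1)})\cong P_i^{(j)}$ for $j<i$. Since $j\neq i$, the identification above gives $P_i^{(j+1)}\cong\T_j^+(P_i^{(j)})$, and since $P_i^{(j)}$ is a cluster-indecomposable $\La^j$-module not isomorphic to $P_j^{(j)}$, it lies in $\FF_0(T_j)$ by Lemma \ref{CT and T}. The tilting equivalence of Lemma \ref{tilt theorem} then yields $\T_j^-\T_j^+(P_i^{(j)})\cong P_i^{(j)}$, completing the induction. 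Taking $j=1$ gives the result.

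For part (2), first note that by Proposition \ref{n-apr}(3), $S_j=\Ext_{\La^j}^n(T_j,P_j^{(j)})=I_j^{(j+1)}$ is the simple injective $\La^{j+1}$-module at vertex $j$ (vertex $j$ being a source of $\La^{j+1}$). I would then show by ascending induction on $m\in\{j,\ldots,l\}$ that $\T_m^+\cdots\T_{j+1}^+(S_j)\cong I_j^{(m+1)}$, the case $m=j$ being the statement $S_j=I_j^{(j+1)}$. The inductive step applies Lemma \ref{inj form} to the sink $i=m+1$ of $\La^{m+1}$: since $j<m+1$, that lemma gives $\T_{m+1}^+(I_j^{(m+1)})\cong I_j^{(m+2)}$. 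At $m=l$, we obtain $\T_l^+\cdots\T_{j+1}^+(S_j)\cong I_j^{(l+1)}$, which Proposition \ref{rotation} identifies with $I_j=I_j^{(1)}$.

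The only subtle point is verifying that the isomorphism $\La^{l+1}\cong\La$ of Proposition \ref{rotation} preserves the vertex labelling, so that $I_j^{(l+1)}$ corresponds to $I_j$ under this identification. This can be read off from the chain of identifications $\La^{l+1}\cong\End_{\DD^b(\mod\La)}(\RT_l^+\cdots\RT_1^+(\nu_n^{-1}\La))\cong\End_{\DD^b(\mod\La)}(\nu_n^{-1}\La)\cong\End_{\DD^b(\mod\La)}(\La)$ used in the proof of that proposition, together with Lemma \ref{nu comm}, which tracks each summand indexed by $j$ vertex-by-vertex through every $\RT_i^+$. Beyond this indexing check, both parts are routine consequences of the tilting equivalence of Lemma \ref{tilt theorem}.
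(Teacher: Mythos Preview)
Your proof is correct and follows essentially the same route as the paper's own argument: for (1) you iterate the identity $\T_j^-\T_j^+(P_i^{(j)})\cong P_i^{(j)}$ coming from the tilting equivalence (the paper cites Corollary \ref{nearly}, which packages the same Lemma \ref{CT and T}/Lemma \ref{tilt theorem} reasoning you spell out), and for (2) you combine Lemma \ref{inj form} with Proposition \ref{rotation} exactly as the paper does. Your explicit check that the isomorphism $\La^{l+1}\cong\La$ respects the vertex labelling is a point the paper leaves implicit, so your version is, if anything, slightly more careful.
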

 
\begin{proof}
\begin{itemize}
\item[(1)]
Let $h\in Q_0$ be a vertex such that $h<i$. 
Then since we have  $P_i^{(h)}=\Hom_{\Lambda^{h-1}}(T_{h-1},P_i^{(h-1)})=\T_{h-1}^+(P_i^{(h-1)})$,  
we have $\T_{h-1}^-P_i^{(h)}\cong \T_{h-1}^-(\T_{h-1}^+(P_i^{(h-1)}))\cong P_i^{(h-1)}$ by Corollary \ref{nearly}. Applying this process repeatedly for any $h<i$, we have the assertion. 
\item[(2)]
By Proposition \ref{rotation}, we have $\Lambda^{l+1}\cong\Lambda$. 
Then the proof is similar to that of (1) by Lemma \ref{inj form}.
\end{itemize}
\end{proof}

As a corollary, we will show that the Coxeter transformation can be described as a composition of reflection transformations.

\begin{cor}\label{c=-C}
Let $\Phi$ be the Coxeter transformation of $\La$.
Then we have $\Phi=t_l\ldots t_1$.
\end{cor}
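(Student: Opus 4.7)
The plan is to verify the identity by evaluating both sides on the $\mathbb{Z}$-basis $\{\dd P_1,\ldots,\dd P_l\}$ of $\mathbb{Z}^l$, which is a basis because $\gl\La<\infty$ (as noted in the proof of Proposition~\ref{finite order}). On the left, since $\Phi$ realizes the action of $\nu_n$ on $K_0(\DD^b(\mod\La))$ and $\nu_n P_i\cong I_i[-n]$, we read off $\Phi(\dd P_i)=(-1)^n\dd I_i$ for every $i\in Q_0$.

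For the right-hand side I would run an induction on $j$ along the chain $\La^1,\ldots,\La^{l+1}$ of $n$-APR tilted algebras, with induction hypothesis
\begin{align*}
t_j\cdots t_1(\dd P_i) &= \dd P_i^{(j+1)} \quad\text{if } i>j,\\
t_j\cdots t_1(\dd P_i) &= (-1)^n\dd I_i^{(j+1)} \quad\text{if } i\le j.
\end{align*}
Admissibility ensures that $j+1$ is a sink of $\La^{j+1}$, so $t_{j+1}$ is genuinely the reflection transformation at that sink. Three cases arise. When $i>j+1$, the projective $P_i^{(j+1)}$ is cluster-indecomposable and not isomorphic to $P_{j+1}^{(j+1)}$, and Corollary~\ref{kakan2}(1) gives $t_{j+1}(\dd P_i^{(j+1)})=\dd(\T_{j+1}^+P_i^{(j+1)})=\dd P_i^{(j+2)}$. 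When $i\le j$, the injective $I_i^{(j+1)}$ is cluster-indecomposable, and its socle $S_i$ differs from that of $P_{j+1}^{(j+1)}=S_{j+1}^{(j+1)}$, so $I_i^{(j+1)}\ncong P_{j+1}^{(j+1)}$; combining Corollary~\ref{kakan2}(1) with Lemma~\ref{inj form} yields $t_{j+1}(\dd I_i^{(j+1)})=\dd I_i^{(j+2)}$.

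The delicate case is $i=j+1$, where the input is $\dd P_{j+1}^{(j+1)}=\mathbf{e}_{j+1}$ and Corollary~\ref{kakan2} is unavailable, since its hypothesis excludes exactly the simple projective at the sink. Here I would compute $t_{j+1}(\mathbf{e}_{j+1})$ directly from the definition: the simple projectivity of $P_{j+1}^{(j+1)}=S_{j+1}$ in $\La^{j+1}$ forces $\langle\mathbf{e}_{j+1},\mathbf{e}_{j+1}\rangle=\dim\End(S_{j+1})=1$ by Lemma~\ref{Euler form}(3), so $s_{j+1}(\mathbf{e}_{j+1})=-\mathbf{e}_{j+1}$, and applying $\delta_{j+1}$ multiplies the $(j+1)$th coordinate by $(-1)^{n-1}$, producing $t_{j+1}(\mathbf{e}_{j+1})=(-1)^n\mathbf{e}_{j+1}$. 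Since $j+1$ becomes a source of $\La^{j+2}$, one has $\mathbf{e}_{j+1}=\dd S_{j+1}^{(j+2)}=\dd I_{j+1}^{(j+2)}$, matching the inductive claim for the newly promoted index.

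Taking $j=l$ and invoking Proposition~\ref{rotation} together with the explicit isomorphism $\La^{l+1}\cong\La$ built in its proof from $\RT_l^+\cdots\RT_1^+(\nu_n^{-1}\La)$, which respects vertex labels, identifies $I_i^{(l+1)}$ with $I_i$. Hence $t_l\cdots t_1(\dd P_i)=(-1)^n\dd I_i=\Phi(\dd P_i)$ for every $i$, and the corollary follows by $\mathbb{Z}$-linearity. The main obstacle is the direct evaluation of $t_{j+1}$ on $\mathbf{e}_{j+1}$: this is the single point where the Corollary~\ref{kakan2} compatibility breaks down and the definitions of $s_{j+1}$ and $\delta_{j+1}$ must be unwound by hand; a subsidiary subtlety is confirming that the identification $\La^{l+1}\cong\La$ preserves vertex labels so that the terminal step really returns $\dd I_i$.
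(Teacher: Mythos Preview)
Your proof is correct and follows essentially the same route as the paper's: both evaluate on the basis $\{\dd P_i\}$, use $\Phi(\dd P_i)=(-1)^n\dd I_i$, invoke Corollary~\ref{kakan2} for the compatibility of $\T^+$ with $t$, and handle the critical step $t_i\mathbf{e}_i=(-1)^n\mathbf{e}_i$ by direct computation. The only difference is organizational: the paper packages your induction into the already-proved Proposition~\ref{proj form}, reading off $\dd P_i=t_1^{-1}\cdots t_{i-1}^{-1}\mathbf{e}_i$ and $\dd I_i=t_l\cdots t_{i+1}\mathbf{e}_i$ and then cancelling, whereas you re-derive this content step by step; the vertex-label subtlety you flag is likewise implicit in the paper's use of Proposition~\ref{proj form}(2), whose proof invokes Proposition~\ref{rotation}.
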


\begin{proof}
It is enough to show that ${\Phi}({\dd}P_i) =t_l\ldots t_1({\dd}P_i)$ for any $i\in Q_0$.
By Lemma \ref{Euler form}, we have
$${\Phi}({\dd} P_i) = (-1)^n{\dd} I_i.$$
On the other hand, by Corollary \ref{kakan2} and Proposition \ref{proj form}, we have 
${\dd} P_i=t_1^{-1}\ldots t_{i-1}^{-1}{\bf e}_i$ and ${\dd} I_i=t_l\ldots t_{i+1} {\bf e}_i$.
Thus we get
\begin{eqnarray*}
t_l\ldots t_1({\dd} P_i)&=&t_l\ldots t_1 (t_1^{-1}\ldots t_{i-1}^{-1}{\bf e}_i)\\
&=&t_l\ldots t_i {\bf e}_i \\
&=& (-1)^{n}t_l\ldots t_{i+1} {\bf e}_i\\
&=&(-1)^{n} {\dd} I_i.
\end{eqnarray*}
\end{proof}

Next we give the connection between reflection functors and $n$-AR translations.

\begin{lemm}\label{composition}
We have an isomorphism 
$\tau_n^-\La\cong{\T}_{1}^-\ldots{\T}_{l-1}^-(T_l),$ where $T_l$ is the $n$-APR tilting $\La^l$-module.
\end{lemm}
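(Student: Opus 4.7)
My plan is to compute $\T_1^- \ldots \T_{l-1}^-(T_l)$ summand by summand, by iterating Lemma \ref{nu comm} in the derived category and then descending to the module level. Decompose
\[ T_l = \tau_n^- P_l^{(l)} \oplus \bigoplus_{j=1}^{l-1} P_j^{(l)} \]
as in Definition \ref{def n-apr}. The target $\tau_n^- \La = \bigoplus_{j=1}^{l} \tau_n^- P_j$ suggests that the summand of $T_l$ labelled by $j$ should be carried to $\tau_n^- P_j \cong \nu_n^{-1} P_j$ (Proposition \ref{cluster proper} (6)).

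For the summand $\tau_n^- P_l^{(l)}$: since $P_l^{(l)}$ is the simple projective at the sink $l$ of $\La^l$, and must be non-injective for $\tau_n^- P_l^{(l)}$ to be non-zero, Proposition \ref{cluster proper} (6) gives $\tau_n^- P_l^{(l)} \cong \nu_n^{-1} P_l^{(l)}$. Each subsequent $\LT_i^-$ with $i \in \{l-1,\ldots,1\}$ satisfies $l \neq i$, so the second formula of Lemma \ref{nu comm} applies and takes $\nu_n^{-1} P_l^{(i+1)}$ to $\nu_n^{-1} P_l^{(i)}$; iterating gives $\nu_n^{-1} P_l = \tau_n^- P_l$.

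For each summand $P_j^{(l)}$ with $j < l$, the computation splits into three phases. Phase 1: applying $\LT_{l-1}^-, \ldots, \LT_{j+1}^-$, each step $\LT_i^-$ has $j < i$, hence $j \neq i$, so by the module-level inverse of $\T_i^+ P_j^{(i)} = P_j^{(i+1)}$ (via Corollary \ref{nearly}) we obtain $\LT_i^-(P_j^{(i+1)}) = P_j^{(i)}$; this reduces the summand to $P_j^{(j+1)}$. Phase 2: in the case $j = i$ of the first formula of Lemma \ref{nu comm} we have $\RT_j^+(\nu_n^{-1} P_j^{(j)}) \cong P_j^{(j+1)}$, hence $\LT_j^-(P_j^{(j+1)}) \cong \nu_n^{-1} P_j^{(j)}$. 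Phase 3: applying $\LT_{j-1}^-, \ldots, \LT_1^-$, each step has $i < j$ so $j \neq i$; the second formula of Lemma \ref{nu comm} then yields $\nu_n^{-1} P_j^{(1)} = \nu_n^{-1} P_j \cong \tau_n^- P_j$. Summing over $j$ produces $\bigoplus_{j=1}^{l} \tau_n^- P_j = \tau_n^- \La$.

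The step most likely to require care is the descent from these derived identifications to the module-level isomorphism $\T_1^- \ldots \T_{l-1}^-(T_l) \cong \tau_n^- \La$. Each intermediate object appearing in the iteration is a cluster-indecomposable $\La^i$-module, namely either a projective $P_j^{(i)}$ or a module of the form $\tau_n^- P_j^{(i)}$ (non-zero by Proposition \ref{cluster proper} (6)), and Corollary \ref{nearly} ensures that the reflection functors $\T_i^\pm$ remain module-level quasi-inverse equivalences on the relevant subcategories throughout the iteration, so $\LT_i^-$ and $\T_i^-$ coincide at every step. The special case where some $P_j$ of $\La$ happens to be injective is consistent: such summands of $\Lambda$ contribute zero on both sides of the desired isomorphism.
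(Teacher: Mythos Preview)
Your approach is essentially the paper's: both compute the effect of each $\T_i^-$ on the individual summands using Lemma~\ref{nu comm} and Corollary~\ref{nearly}. The paper organises the same computation as an induction on the step~$i$, setting $U_i := (\bigoplus_{j<i} P_j^{(i)}) \oplus \tau_n^-(\bigoplus_{j\ge i} P_j^{(i)})$ and showing $\T_{i-1}^- U_i \cong U_{i-1}$, whereas you track each summand through all steps; this is just a transposition of the same double-indexed calculation.

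One point to tighten in your descent paragraph: the claim that $\LT_i^-$ and $\T_i^-$ coincide at every step via Corollary~\ref{nearly} would require verifying that no intermediate $\tau_n^- P_j^{(i+1)}$ is the exceptional simple injective $I_i^{(i+1)}$ (which is excluded from the equivalence of Corollary~\ref{nearly}), and also that it is nonzero. The paper avoids this check entirely by computing at each step
\[
\T_{i-1}^-\bigl(\tau_n^- P_j^{(i)}\bigr)=H^0\bigl(\LT_{i-1}^-(\nu_n^{-1} P_j^{(i)})\bigr)=H^0\bigl(\nu_n^{-1} P_j^{(i-1)}\bigr)=\tau_n^- P_j^{(i-1)},
\]
using only $\T_i^- = H^0\circ\LT_i^-$ on modules, Lemma~\ref{nu comm}, and $\tau_n^- = H^0\circ\nu_n^{-1}$. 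This works uniformly: if $P_j^{(i)}$ happens to be injective the paper shows (via Lemma~\ref{inj form}) that $P_j^{(i-1)}$ is too, so both sides vanish. Rewriting your descent in this $H^0$ form removes the need to invoke Corollary~\ref{nearly} for the $\tau_n^-$-summands.
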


\begin{proof}
We denote by $U_{i}:=( \bigoplus^{i-1}_{j = 1}P_j^{(i)})\oplus\tau_n^{-}( \bigoplus^l_{j = i} P_j^{(i)})$. 
Note that we have $U_1=\tau_n^-\La$ and $U_l=T_l$. 
Then it is enough to show that  
$$U_{i-1}\cong {\T}_{i-1}^-
U_{i},$$
for any $i\in Q_0$ with $i>1$. 
Fix a vertex $i\in Q_0$.

First, for a vertex $j\in Q_0$ such that $j<i-1$, we have $P_j^{(i-1)}\cong{\T}_{i-1}^-P_j^{(i)}$ by Corollary \ref{nearly}.  
Similarly, since ${\T}_{i-1}^+(\tau_n^{-}P_{i-1}^{(i-1)})\cong P_{i-1}^{(i)}$,
we have $\tau_n^{-}P_{i-1}^{(i-1)}\cong{\T}_{i-1}^-P_{i-1}^{(i)}$.

Next, let $j\in Q_0$ be a vertex such that $j\geq i$.
\begin{itemize}
\item[(i)]
Assume that $P_j^{(i)}$ is not injective. Then by Proposition \ref{cluster proper} (6), we obtain $\tau_n^-P_j^{(i)}\cong\nu_n^{-1}P_j^{(i)}$. Hence  we have 
\begin{eqnarray*}
{\T}_{i-1}^-(\tau_n^{-}P_j^{(i)}) 
&\cong & H^0(\LT_{i-1}^-(\nu_n^{-1}P_j^{(i)})) \\
&\cong & H^0(\nu_n^{-1}P_j^{(i-1)}) \ \ \ \ \ (\text{Lemma}\ \ref{nu comm})\\
&\cong & \tau_n^{-1}P_j^{(i-1)}.
\end{eqnarray*} 
\item[(ii)]
Assume that $P_j^{(i)}$ is injective. 
Then we can assume that $j>i$.
Hence we have $P_{j}^{(i)}\cong{\T}_{i-1}^+(P_{j}^{(i-1)})$.
It shows that $P_{j}^{(i-1)}$ is an injective $\La^{i-1}$-module by Lemma \ref{inj form}. 
Therefore we have $\tau_n^{-}P_j^{(i)}=0$ and $\tau_n^{-}P_j^{(i-1)}=0$.
\end{itemize}
Consequently, we get $\tau_n^{-}P_j^{(i-1)}\cong{\T}_{i-1}^-(\tau_n^{-}P_j^{(i)})$ for $j\geq i$. Thus we obtain $U_{i-1}\cong {\T}_{i-1}^-
U_{i}$ and the statement follows from the inductive step.
\end{proof}
 
Now we define the \emph{Coxeter functors} by
\begin{align*}
{\bf C}^+ &:= {\T}_l^+\ldots{\T}_1^+:\mod\Lambda\to \mod\Lambda,\\
{\bf C}^- &:= {\T}_1^-\ldots{\T}_l^-:\mod\Lambda\to \mod\Lambda.
\end{align*}

Then we explain the interplay between $n$-AR translations and the Coxeter functors. We remark that ${\bf C}^+,{\bf C}^-$ do not depend on the choice of an admissible numbering by the following result, which is an analogous result of \cite{APR}.

\begin{prop}\label{tau=coxeter2}

\begin{itemize}
\item[(1)]There exists an isomorphism ${\bf C}^+ X \cong \tau_n X$ for a $\Lambda$-module $X$.
\item[(2)]There exists an isomorphism  ${\bf C}^- X \cong \tau_n^- X$ for a $\Lambda$-module $X$.
\end{itemize}
\end{prop}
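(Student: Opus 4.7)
I prove (2); part (1) will follow by a completely dual argument using the $\T_i^+$ and Lemma \ref{inj form} in place of $\T_i^-$ and Lemma \ref{composition}.

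The strategy is to lift the problem to the bounded derived category and identify the composition of reflection functors with $\nu_n^{-1}$.

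\textbf{Step 1 (the case $X=\La$).} Observe that $\T_l^-(\La^{l+1})=\La^{l+1}\otimes_{\La^{l+1}}T_l\cong T_l$. Combined with Lemma \ref{composition} and the identification $\La^{l+1}\cong\La$ from Proposition \ref{rotation}, this gives
\[
{\bf C}^-(\La)=\T_1^-\cdots\T_{l-1}^-(T_l)\cong\tau_n^-\La.
\]
By additivity and naturality this extends to ${\bf C}^-(P)\cong\tau_n^-(P)$ for every projective $\La$-module $P$.

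\textbf{Step 2 (derived identification).} Set $\mathbb{C}^-:=\LT_1^-\circ\cdots\circ\LT_l^-:\DD^b(\mod\La^{l+1})\to\DD^b(\mod\La)$, a triangle equivalence by Lemma \ref{tilt theorem}. Each $\LT_i^-$ is a standard derived equivalence and hence commutes (up to natural isomorphism) with the Nakayama functor $\nu$; therefore $\mathbb{C}^-$ commutes with $\nu_n=\nu\circ[-n]$. Combined with the derived upgrade of Step 1, namely $\mathbb{C}^-(\La^{l+1})\cong\nu_n^{-1}\La$ (read off the proof of Proposition \ref{rotation}), we obtain $\mathbb{C}^-(\nu_n^k\La^{l+1})\cong\nu_n^{k-1}\La$ for all $k\in\mathbb{Z}$. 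Because $\La^{l+1}\cong\La$ is $n$-representation-finite, the cluster-indecomposable summands $\{\nu_n^{-k}\La\}_{k\ge 0}$ of the $n$-cluster tilting module split-generate $\DD^b(\mod\La^{l+1})$ as a triangulated category (Proposition \ref{cluster proper}(1)). This upgrades the object-wise agreement to a natural isomorphism $\mathbb{C}^-\cong\nu_n^{-1}$ of triangle functors.

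\textbf{Step 3 (descent to modules).} For $X\in\mod\La$, Step 2 gives $\mathbb{C}^-(X)\cong\nu_n^{-1}(X)$ in $\DD^b(\mod\La)$, whence $H^0(\mathbb{C}^-X)\cong H^0(\nu_n^{-1}X)=\tau_n^-(X)$ by definition. The final identification ${\bf C}^-(X)=H^0(\mathbb{C}^-(X))$ is produced by an inductive descent: by Lemma \ref{tilt theorem}, each $\T_i^-$ coincides with $H^0\circ\LT_i^-$ on the subcategory $\XX_0(T_i)$, and Corollary \ref{nearly} controls the position of the intermediate outputs in the successive $\FF_0$/$\XX_0$ classes; a truncation-triangle argument then shows that the non-zero-degree pieces of the intermediate derived tensors contribute nothing to $H^0$ of the full composition.

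\textbf{Main obstacle.} Step 3 is the subtlest point: a priori the module-level iterated tensor products defining ${\bf C}^-$ can differ from the derived ones by higher $\Tor$ contributions, and there is no formal reason for these to vanish at each stage for an arbitrary $X$. Making the descent rigorous --- i.e., showing these contributions either vanish or cancel after one takes $H^0$ of the full composition --- is where the $n$-representation-finiteness hypothesis enters essentially, through the equivalences of cluster-tilting subcategories provided by Corollary \ref{nearly}.
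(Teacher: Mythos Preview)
Your Step~3 is where the argument breaks down. The proposition is stated for an \emph{arbitrary} $\Lambda$-module $X$, but Corollary~\ref{nearly} only governs reflection functors on cluster-indecomposable modules; for general $X$ the intermediate outputs $\T_{i+1}^-\cdots\T_l^-(X)$ need not lie in $\XX_0(T_i)$, so that route is unavailable. (A bare truncation argument \emph{would} show $H^0(\mathbb{C}^-X)\cong X\otimes_\Lambda H^0(B')$, since $B'\cong\nu_n^{-1}\Lambda$ is concentrated in non-positive degrees --- but you did not give this, and instead invoked cluster-tilting machinery that does not apply.) Step~2 also has a gap: agreement of two triangle functors on a split-generating set of \emph{objects} does not by itself yield a natural isomorphism of functors; one would need the bimodule complexes underlying $\mathbb{C}^-$ and $\nu_n^{-1}$ to be quasi-isomorphic \emph{as bimodules}, which is stronger than what the proof of Proposition~\ref{rotation} provides.

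The paper sidesteps the derived category entirely with a short module-level argument. By iterated Hom--tensor adjunction,
\[
{\bf C}^+X\;\cong\;\Hom_\Lambda\bigl(T_l\otimes_{\Lambda^l}\cdots\otimes_{\Lambda^2}T_1,\,X\bigr)\;\cong\;\Hom_\Lambda(\tau_n^-\Lambda,X),
\]
the last isomorphism by Lemma~\ref{composition}. Since $\Hom_\Lambda(\tau_n^-\Lambda,\Lambda)=0$ by Proposition~\ref{cluster proper}(4), this equals $\underline{\Hom}_\Lambda(\tau_n^-\Lambda,X)$, and the $n$-AR duality of Proposition~\ref{cluster proper}(3) gives $D\Ext^n_\Lambda(X,\Lambda)=\tau_nX$. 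Part~(2) uses the same trick after applying $D$: one gets $D({\bf C}^-X)\cong\Hom_\Lambda(X,D(\tau_n^-\Lambda))=\Hom_\Lambda(X,\tau_nD\Lambda)$, and Proposition~\ref{cluster proper}(3),(4) again finish. This works uniformly for all $X$ with no derived descent needed.
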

 
\begin{proof}
\begin{itemize}
\item[(1)]We have $\La$-module isomorphisms 
\begin{eqnarray*}
{\T}_l^+\ldots{\T}_1^+(X) &\cong&\Hom_{\Lambda^l}(T_l,\Hom_{\Lambda^{l-1}}(T_{l-1},\cdots(\Hom_{\Lambda^{}}(T_{1},X)\cdots)\\
&\cong& \Hom_{\Lambda}(T_l\otimes_{\Lambda^l}\ldots\otimes_{\Lambda^2} T_1,X)\\
&\cong& \Hom_{\Lambda}({\T}_1^-\ldots{\T}_{l-1}^-(T_l),X).
\end{eqnarray*}
Then, by Lemma \ref{composition}, 
we have ${\bf C}^+X  \cong \Hom_\Lambda (\tau_n^{-} \Lambda, X).$

Moreover we have $\Hom_\Lambda(\tau_n^{-}\Lambda,\Lambda)=0$ by Proposition \ref{cluster proper} (4). Thus 
we get $\Hom_\Lambda(\tau_n^{-}\Lambda,X)\cong\underline{\Hom}_\Lambda(\tau_n^{-}\Lambda,X)$. 
Then, 
we obtain
\begin{eqnarray*}
\Hom_\Lambda (\tau_n^{-} \Lambda, X) &\cong&\underline{\Hom}_\Lambda(\tau_n^{-}\Lambda,X)\\
&\cong& D\Ext^n_\Lambda(X,\tau_n(\tau_n^{-}\Lambda))\ \ \ \ \ \ \  \ (\text{Proposition}\ \ref{cluster proper}\ (3))\\
&\cong& D\Ext^n_\Lambda(X,\Lambda/I)\ \ \ \ \ (I:\text{injective modules of $\La$})\\
&\cong& D\Ext^n_\Lambda(X,\Lambda)\\
&=& \tau_n X.
\end{eqnarray*}

\item[(2)] We have isomorphisms
\begin{eqnarray*}
D({\T}_1^-\ldots{\T}_l^-(X) )&\cong&\Hom_{K}({\T}_1^-\ldots{\T}_l^-(X),K)\\
&\cong& \Hom_{K}(X\otimes_{\Lambda^{l+1}}T_l\otimes_{\Lambda^{l}}\ldots\otimes_{\Lambda^{2}} T_1,K)\\
&\cong& \Hom_{\Lambda}(X,\Hom_{K}(T_l\otimes_{\Lambda^{l}}\ldots\otimes_{\Lambda^{2}} T_1,K))\\
&\cong& \Hom_{\Lambda}(X,\Hom_{K}({\T}_1^-\ldots{\T}_{l-1}^-(T_l),K)).
\end{eqnarray*}

By Lemma \ref{composition} and $D(\tau_n^{-} \Lambda)=\tau_n^{}(D \Lambda)$, we have 
$D({\bf C}^-X)  \cong \Hom_\Lambda (X,D(\tau_n^{-} \Lambda))\cong\Hom_\Lambda (X,\tau_n^{}(D \Lambda)).$ 
Moreover we have $\Hom_\Lambda(D\Lambda,\tau_n(D\Lambda))=0$ by Proposition \ref{cluster proper} (4). Thus, we obtain
\begin{eqnarray*}
D\Hom_\Lambda (X,\tau_n^{} (D\Lambda)) &\cong&D\overline{\Hom}_\Lambda(X,\tau_n^{} (D\Lambda))\\
&\cong& \Ext^n_\Lambda(\tau_n^{-}(\tau_n^{}(D\Lambda)),X)\ \ \ \ \ \ \ \ \ \ \ \ \ (\text{Proposition}\ \ref{cluster proper}\  (3))\\
&\cong& \Ext^n_\Lambda((D\Lambda)/P,X)\ \ \ \ (P:\text{projective modules of $D\La$})\\
&\cong& \Ext^n_\Lambda(D\Lambda,X)\\
&=& \tau_n^{-} X.
\end{eqnarray*}
\end{itemize}
\end{proof} 
 
By Proposition \ref{tau=coxeter2}, if $X$ is projective (respectively, injective), then we obtain ${\bf C}^+X\cong\tau_nX=0$ (respectively, ${\bf C}^-X\cong\tau_n^{-}X=0$).
 
Using the above results, we show that any cluster-indecomposable module can be obtained from a simple module by applying a sequence of reflection functors. This is a generalization of \cite[Corollary 3.1]{BGP}.
 
\begin{cor}\label{description}
Let $X$ be a cluster-indecomposable $\Lambda$-module.
\begin{itemize}
\item[(1)]There exist integers $u \geq 0$ and $i \in Q_0$ such that
$X \cong {\bf C}^{-u}{\T}_1^-\ldots{\T}_{i-1}^-(S_i)$, where $S_i$ is the simple projective $\Lambda^i$-module.
\item[(2)]There exists integers $v \geq 0$ and $j \in Q_0$ such that
$X \cong {\bf C}^{v}{\T}_l^+\ldots{\T}_{j+1}^+(S_j)$, where $S_j$ is the simple injective $\Lambda^{j+1}$-module.
\end{itemize}
\end{cor}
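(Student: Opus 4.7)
The plan is to combine the three ingredients already established in this section: the $\tau_n$-orbit description of cluster-indecomposable modules, the reflection-functor description of indecomposable projectives and injectives, and the identification of the Coxeter functors with $n$-Auslander--Reiten translations.

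For part (1), I start with a cluster-indecomposable $\Lambda$-module $X$. By Proposition \ref{cluster proper}(1), there exists an indecomposable projective $\Lambda$-module $P_i$ and an integer $u \geq 0$ such that $X \cong \tau_n^{-u} P_i$. Proposition \ref{proj form}(1) rewrites $P_i$ as $\T_1^-\ldots\T_{i-1}^-(S_i)$, where $S_i$ is the simple projective $\Lambda^i$-module. Finally, Proposition \ref{tau=coxeter2}(2) gives $\tau_n^- \cong {\bf C}^-$ as functors on $\mod\Lambda$, so iterating yields $\tau_n^{-u}(-)\cong ({\bf C}^-)^u(-)={\bf C}^{-u}(-)$. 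Composing the three isomorphisms delivers
\[
X \;\cong\; \tau_n^{-u}P_i \;\cong\; {\bf C}^{-u}\bigl(\T_1^-\ldots\T_{i-1}^-(S_i)\bigr),
\]
which is exactly the claim.

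Part (2) is handled dually. Using the injective side of Proposition \ref{cluster proper}(1), write $X \cong \tau_n^{v} I_j$ for some indecomposable injective $\Lambda$-module $I_j$ and $v \geq 0$. Proposition \ref{proj form}(2) presents $I_j$ as $\T_l^+\ldots\T_{j+1}^+(S_j)$, where $S_j$ is the simple injective $\Lambda^{j+1}$-module (using Proposition \ref{rotation} to identify $\Lambda^{l+1}\cong\Lambda$). Proposition \ref{tau=coxeter2}(1) then gives $\tau_n^{v}\cong ({\bf C}^+)^v={\bf C}^{v}$ on $\mod\Lambda$, and assembling these isomorphisms yields the desired expression.

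There is essentially no obstacle: the corollary is just the synthesis of Propositions \ref{cluster proper}(1), \ref{proj form}, and \ref{tau=coxeter2}, the first specifying how to reach $X$ from an indecomposable projective or injective by iterating $\tau_n^{\pm}$, the second expressing those projectives/injectives through iterated reflection functors, and the third replacing $\tau_n^{\pm}$ by the Coxeter functors ${\bf C}^{\pm}$. The only point to be mindful of is that ${\bf C}^{\pm}$ must be applied to the $\Lambda$-module $P_i$ (respectively $I_j$) rather than to the intermediate modules living over $\Lambda^i$ (respectively $\Lambda^{j+1}$), but this is automatic since in the displayed formula the reflection functors $\T_1^-\ldots\T_{i-1}^-$ are applied first, producing a genuine $\Lambda$-module before ${\bf C}^{-u}$ acts.
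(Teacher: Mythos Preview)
Your argument is correct and, in a sense, is exactly the one the paper itself points to in the Remark immediately following the corollary: the statement is a rephrasing of Proposition~\ref{cluster proper}(1) via Propositions~\ref{proj form} and~\ref{tau=coxeter2}. You assemble these three ingredients cleanly and the composition is valid.

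The paper's own proof, however, takes a different route. Rather than starting from the decomposition $X\cong\tau_n^{-u}P_i$, it applies the forward functors $\T^+$ and ${\bf C}^+$ to $X$ until the result vanishes: nilpotency of $\tau_n$ (Proposition~\ref{cluster proper}(5)) together with ${\bf C}^+\cong\tau_n$ guarantees some $u$ with ${\bf C}^uX\neq0$ and ${\bf C}^{u+1}X=0$, and then one locates the first index $i$ at which $\T_i^+\T_{i-1}^+\cdots\T_1^+{\bf C}^uX=0$. Since $\T_i^+=\Hom_{\Lambda^i}(T_i,-)$ kills precisely the simple projective $S_i$, this forces $\T_{i-1}^+\cdots\T_1^+{\bf C}^uX\cong S_i$, and Corollary~\ref{nearly} inverts back to the desired expression. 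The payoff of this approach is that it simultaneously establishes the intermediate isomorphism $\T_{i-1}^+\cdots\T_1^+{\bf C}^u(X)\cong S_i$, which is exactly the form invoked in the proof of Theorem~\ref{uniqueness}. Your direct synthesis is shorter, but if you adopt it you should note that this intermediate statement is still available (by applying $\T_{i-1}^+\cdots\T_1^+{\bf C}^u$ to both sides and using Corollary~\ref{nearly}), since the uniqueness argument later relies on it.
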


\begin{proof}
We only prove (1); the proof of (2) is similar.
By Proposition \ref{cluster proper} (5) and Proposition \ref{tau=coxeter2}, there exists an integer $u \geq 0$ such that ${\bf C}^uX\neq0$ and ${\bf C}^{u+1}X=0$.
Hence there exists $i\in Q_0$ such that
${\T}_{i-1}^+{\T}_{i-2}^+\ldots{\T}_{1}^+{\bf C}^{u}(X) \neq 0$ and
${\T}_{i}^+{\T}_{i-1}^+\ldots{\T}_{1}^+{\bf C}^{u}(X) = 0$.

Since ${\T}_{i}^+:=\Hom_{\La^i}(T_i,-)$, we obtain ${\T}_{i-1}^+\ldots{\T}_{1}^+{\bf C}^{u}(X) \cong S_i$, where $S_i$ is the  simple projective $\Lambda^i$-module. 
By applying Corollary \ref{nearly}, we have $X \cong {\bf C}^{-u}{\T}_1^-\ldots{\T}_{i-1}^-(S_i)$.
\end{proof}

\begin{remk}
By Proposition \ref{proj form}, ${\T}_1^-\ldots{\T}_{i-1}^-(S_i)$ (respectively, ${\T}_l^+\ldots{\T}_{j+1}^+(S_j)$) is a projective module (respectively, injective module) for any $i\in Q_0$. 
On the other hand, the Coxeter functors ${\bf C}^+,{\bf C}^-$ give $n$-AR translations $\tau_n^{},\tau_n^{-}$ by Proposition \ref{tau=coxeter2}. 
These results imply that Corollary \ref{description} rephrases Proposition \ref{cluster proper} (1) using  reflection functors. 
\end{remk} 

As a corollary of the previous results, we give descriptions of cluster-roots in terms of reflection transformations as follows.

\begin{theorem}\label{form}Denote by ${\bf p}_i := t_1^{-1}\ldots t_{i-1}^{-1}{\bf e}_i, {\bf q}_j := t_l\ldots t_{j+1}{\bf e}_j$ and ${\bf c}:=t_l\ldots t_1$.
\begin{itemize}
\item[(1)]Any cluster-root can be written as
${\bf c}^{-u}{\bf p}_i$ for integers $u \geq 0$ and $i\in Q_0$.
\item[(2)]Any cluster-root can be written as
${\bf c}^{v}{\bf q}_j$ for integers $v \geq 0$ and $j\in Q_0$.
\end{itemize}
\end{theorem}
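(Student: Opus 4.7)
The plan is to deduce this theorem directly from the previous structural results by combining the module-level description in Corollary \ref{description} with the compatibility of reflection functors and reflection transformations on dimension vectors. Since the two statements are dual, I would only set up (1) in detail; (2) follows by the same argument.

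Let $X$ be a cluster-indecomposable $\Lambda$-module. By Corollary \ref{description} (1), there are integers $u\ge 0$ and $i\in Q_0$ with
$$X \cong {\bf C}^{-u}\,{\T}_1^-\ldots{\T}_{i-1}^-(S_i),$$
where $S_i$ is the simple projective $\La^i$-module, so $\dd S_i={\bf e}_i$. The plan is to take dimension vectors on both sides and translate each functor into its linear counterpart.

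First I would chase $\dd(\T_1^-\ldots\T_{i-1}^-(S_i))$ using Corollary \ref{kakan2} (2), applied inductively as $j$ runs from $i-1$ down to $1$. At each step the input is a projective module $P_i^{(j+1)}$ over $\La^{j+1}$ (using Proposition \ref{proj form} (1) and the fact that $\T_j^-P_i^{(j+1)}\cong P_i^{(j)}$ as seen in the proof of Lemma \ref{composition}); such a projective module is cluster-indecomposable by Proposition \ref{cluster proper} (1), and it is not isomorphic to the simple injective $\La^{j+1}$-module at the source vertex $j$ (since the relevant vertex is $i>j$). Hence the hypothesis of Corollary \ref{kakan2} (2) is satisfied at every step, and iterated application yields
$$\dd\bigl({\T}_1^-\ldots{\T}_{i-1}^-(S_i)\bigr)=t_1^{-1}\ldots t_{i-1}^{-1}{\bf e}_i={\bf p}_i.$$

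Next I would translate the Coxeter functor ${\bf C}^-$ into the matrix ${\bf c}^{-1}$. By Proposition \ref{tau=coxeter2} (2), ${\bf C}^- Y\cong\tau_n^- Y$ for every $\Lambda$-module $Y$. Since every module appearing during the application of ${\bf C}^{-u}$ is cluster-indecomposable (being a direct summand of the $n$-cluster tilting module of $\La$), Lemma \ref{tau=c} (2) gives $\dd(\tau_n^- Y)=\Phi^{-1}(\dd Y)$, and by Corollary \ref{c=-C} we have $\Phi^{-1}={\bf c}^{-1}$. Therefore taking dimension vectors of the decomposition above yields
$$\dd X={\bf c}^{-u}\,{\bf p}_i,$$
which is (1). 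Part (2) is proved identically, using Corollary \ref{description} (2), Corollary \ref{kakan2} (1), Proposition \ref{proj form} (2), Proposition \ref{tau=coxeter2} (1) and Corollary \ref{c=-C}.

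The only genuine subtlety, and the main thing I would check carefully, is the iterated use of Corollary \ref{kakan2}: one must verify at each intermediate stage both that the module is cluster-indecomposable (so the dimension-vector formula applies) and that it avoids the excluded simple projective or simple injective (so the functor does not vanish). Both facts reduce to the explicit identification of the intermediate modules as projective modules $P_i^{(j)}$ (respectively injective modules), which was essentially established in Proposition \ref{proj form} and the proof of Lemma \ref{composition}; so no new work is required beyond a bookkeeping verification.
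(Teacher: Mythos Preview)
Your proof is correct and follows essentially the same route as the paper: both start from Corollary \ref{description} and then repeatedly pass to dimension vectors via Corollary \ref{kakan2} to obtain ${\bf c}^{-u}{\bf p}_i$. The paper simply writes ``applying Corollary \ref{kakan2} repeatedly'' for the whole chain (including the factors of ${\bf C}^-$), whereas you handle the ${\bf C}^{-u}$ part via Proposition \ref{tau=coxeter2}, Lemma \ref{tau=c}, and Corollary \ref{c=-C}; this is an equivalent variation, and your extra bookkeeping on the hypotheses of Corollary \ref{kakan2} is a welcome addition rather than a deviation.
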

 
\begin{proof}
We only prove (1); the proof (2) is similar.
Let $x$ be a cluster-root.
By the definition, we can take a cluster-indecomposable $\La$-module $X$
such that ${\dd}X = x$.
By Corollary \ref{description}, there exist integers $u \geq 0$ and $i \in Q_0$ such that
$X \cong {\bf C}^{-u}{\T}_1^-\ldots{\T}_{i-1}^-(S_i)$, where $S_i$ is the simple projective $\Lambda^i$-module.
Then, applying Corollary \ref{kakan2} repeatedly, we obtain $\dd X = {\bf c}^{-u}t_1^{-1}\ldots t_{i-1}^{-1}(\dd S_i) = {\bf c}^{-u}{\bf p}_i.$
\end{proof}

\begin{exam}\label{exam2}
Let $\Lambda$ be the algebra of Example \ref{exam1} (the quiver with relations of $\La^i$ is given in Example \ref{exam0} (2)). 
Then we have 

$$t_1^{-1}=\left(\begin{smallmatrix}
1&1&0&-1&0&0\\
0&1&0&0&0&0\\
0&0&1&0&0&0\\
0&0&0&1&0&0\\
0&0&0&0&1&0\\
0&0&0&0&0&1\\
\end{smallmatrix}\right),
t_2^{-1}=\left(\begin{smallmatrix}
1&0&0&0&0&0\\
-1&1&1&1&-1&0\\
0&0&1&0&0&0\\
0&0&0&1&0&0\\
0&0&0&0&1&0\\
0&0&0&0&0&1\\
\end{smallmatrix}\right),
t_3^{-1}=\left(\begin{smallmatrix}
1&0&0&0&0&0\\
0&1&0&0&0&0\\
0&-1&1&0&1&0\\
0&0&0&1&0&0\\
0&0&0&0&1&0\\
0&0&0&0&0&1\\
\end{smallmatrix}\right),$$
$$t_4^{-1}=\left(\begin{smallmatrix}
1&0&0&0&0&0\\
0&1&0&0&0&0\\
0&0&1&0&0&0\\
1&-1&0&1&1&-1\\
0&0&0&0&1&0\\
0&0&0&0&0&1\\
\end{smallmatrix}\right),
t_5^{-1}=\left(\begin{smallmatrix}
1&0&0&0&0&0\\
0&1&0&0&0&0\\
0&0&1&0&0&0\\
0&0&0&1&0&0\\
0&1&-1&-1&1&1\\
0&0&0&0&0&1\\
\end{smallmatrix}\right),
t_6^{-1}=\left(\begin{smallmatrix}
1&0&0&0&0&0\\
0&1&0&0&0&0\\
0&0&1&0&0&0\\
0&0&0&1&0&0\\
0&0&0&0&1&0\\
0&0&0&1&-1&1\\
\end{smallmatrix}\right).$$

Thus we have
$${\bf c}^{-1}=t_1^{-1}\ldots t_l^{-1}=
\left( \begin{array}{cccccc}
0 & 0 &1 & 0 & 0 &0 \\
0 & 0 &0 & 0 & 1 &0 \\
0 & 0 &0 &0 &0&1 \\
1 & 0 &-1 &0&1&0 \\
0 & 1 &-1 &0&0&1 \\
0 & 0 &0 &1&-1&1
\end{array} \right)_.$$

Applying ${\bf c}^{-1}$ to ${\bf p}_i$, we have all cluster-roots as follows.

\[
{\bf p}_1=\left( \begin{array}{cccccc}
1  \\
0 \\
0 \\
0\\
0\\
0
\end{array} \right)\overset{{\bf c}^{-1}}{\to}\left( \begin{array}{cccccc}
0  \\
0 \\
0 \\
1\\
0\\
0
\end{array} \right)\overset{{\bf c}^{-1}}{\to}\left( \begin{array}{cccccc}
0  \\
0 \\
0 \\
0\\
0\\
1
\end{array} \right)\overset{{\bf c}^{-1}}{\to}\left( \begin{array}{cccccc}
0  \\
0 \\
1 \\
0\\
1\\
1
\end{array} \right)\overset{{\bf c}^{-1}}{\to}\left( \begin{array}{cccccc}
1  \\
1 \\
1 \\
0\\
0\\
0
\end{array} \right)\overset{{\bf c}^{-1}}{\to}{\bf p}_1,
\]
\[
{\bf p}_2=\left( \begin{array}{cccccc}
1  \\
1 \\
0 \\
0\\
0\\
0
\end{array} \right)\overset{{\bf c}^{-1}}{\to}\left( \begin{array}{cccccc}
0  \\
0 \\
0 \\
1\\
1\\
0
\end{array} \right)\overset{{\bf c}^{-1}}{\to}\left( \begin{array}{cccccc}
0  \\
1 \\
0 \\
1\\
0\\
0
\end{array} \right)\overset{{\bf c}^{-1}}{\to}\left( \begin{array}{cccccc}
0  \\
0 \\
0 \\
0\\
1\\
1
\end{array} \right)\overset{{\bf c}^{-1}}{\to}\left( \begin{array}{cccccc}
0  \\
1 \\
1 \\
1\\
1\\
0
\end{array} \right)\overset{{\bf c}^{-1}}{\to}{\bf p}_2.\]
Note that we have ${\bf p}_3=\left( \begin{array}{cccccc}
1  \\
1 \\
1 \\
0\\
0\\
0
\end{array} \right)_,\ {\bf p}_4=\left( \begin{array}{cccccc}
0  \\
1 \\
0 \\
1\\
0\\
0
\end{array} \right)_,\ {\bf p}_5=\left( \begin{array}{cccccc}
0  \\
1 \\
1 \\
1\\
1\\
0
\end{array} \right)_,\ {\bf p}_6=\left( \begin{array}{cccccc}
0  \\
0 \\
1 \\
0\\
1\\
1
\end{array} \right)_.$
\end{exam}

We can easily check that ${\bf c}^{-1}$ and ${\bf p}_i$ coincide with $\Phi^{-1}$ and ${\dd} P_i$, respectively. This follows from the results of Proposition \ref{proj form} and Corollary \ref{c=-C}.
 
Finally, we give the following theorem.
 
\begin{Theorem}\label{uniqueness}
Let $\Lambda= KQ/(R)$ be an $n$-representation-finite algebra.
Cluster-indecomposable modules are uniquely determined up to isomorphism by their dimension vectors.
\end{Theorem}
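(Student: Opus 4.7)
The plan is to exploit Corollary \ref{description}(1), which writes any cluster-indecomposable module $X$ as $X\cong {\bf C}^{-u_X}\T_1^-\cdots\T_{i_X-1}^- S_{i_X}$ for a pair $(u_X,i_X)$ with $u_X\ge 0$ and $i_X\in Q_0$. Since this pair determines $X$ up to isomorphism, it suffices to recover $(u_X,i_X)$ from $\dd X$ alone.

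To do so, I apply the reflection functors $\T_1^+,\T_2^+,\ldots,\T_l^+$ in cyclic order to $X$, using Proposition \ref{rotation} to identify $\Lambda^{l+1}\cong\Lambda$ so that the sequence can be iterated indefinitely. By Proposition \ref{proj form} together with Corollary \ref{description}, after exactly $u_Xl+(i_X-1)$ steps the image becomes the simple projective $S_{i_X}$ of $\Lambda^{i_X}$, which the next functor $\T_{i_X}^+$ annihilates. Set $w(X):=u_Xl+i_X$, which is finite because $\tau_n$ is nilpotent by Proposition \ref{cluster proper}(5). For each $m<w(X)$, the module at step $m$ is a non-zero indecomposable cluster-indecomposable by Proposition \ref{kakan1} and Corollary \ref{nearly}, and its dimension vector is the corresponding product of reflection transformations $t_k$ applied to $\dd X$ by Corollary \ref{kakan2}.

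Given cluster-indecomposables $X,Y$ with $\dd X=\dd Y$, running the procedure on both in parallel produces identical sequences of dimension vectors. Since a module is zero precisely when its dimension vector is zero, this forces $w(X)=w(Y)$. Writing $w(X)=u_Xl+i_X=u_Yl+i_Y$ with $1\le i_X,i_Y\le l$ gives $(u_X,i_X)=(u_Y,i_Y)$, hence $X\cong Y$. Equivalently, at step $w(X)-1$ both $X$ and $Y$ coincide with the same simple in $\mod\Lambda^{i_X}$, and reversing the reflection functors via the quasi-inverses of Corollary \ref{nearly} reconstructs $X$ and $Y$ identically.

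The main point to verify is the behavior at the killing step: the transformation $t_{i_X}$ sends $e_{i_X}=\dd S_{i_X}$ to $(-1)^n e_{i_X}\ne 0$, yet the functor $\T_{i_X}^+$ sends $S_{i_X}$ to zero, so the naive dimension-vector rule breaks down exactly at the terminal step. The resolution is that an indecomposable module with dimension vector $e_{i_X}$ must be the simple $S_{i_X}=P_{i_X}$ of the current algebra, so if $X$ is annihilated at step $w(X)$, then $Y$ with matching dimension vector is forced to be the same simple at step $w(X)-1$ and is killed simultaneously.
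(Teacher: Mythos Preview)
Your proposal is correct and follows essentially the same route as the paper: write each cluster-indecomposable via Corollary~\ref{description}, track dimension vectors under iterated reflections using Corollary~\ref{kakan2}, and conclude by observing that once one module's image has dimension vector $\mathbf{e}_{i_X}$ the other's does too, forcing both to equal the simple $S_{i_X}$ and hence $X\cong Y$ after reversing the functors via Corollary~\ref{nearly}. Note that the sentence ``a module is zero precisely when its dimension vector is zero'' is, as you yourself point out, not the operative step (since $t_{i_X}\mathbf{e}_{i_X}=(-1)^n\mathbf{e}_{i_X}\neq 0$ while $\T_{i_X}^+S_{i_X}=0$); the genuine argument is the one in your final paragraph, and that is exactly the paper's proof.
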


\begin{proof}
Assume that $X$ and $Y$ are cluster-indecomposable $\Lambda$-modules such that
${\dd} X = {\dd} Y$. By Corollary \ref{description},
there exist integers $u,v \geq 0$ and $i,j \in Q_0$ such that
${\T}_{i-1}^+\ldots{\T}_{1}^+{\bf C}^{u}(X) \cong S_i$ and 
${\T}_{j-1}^+\ldots{\T}_{1}^+{\bf C}^{v}(X) \cong S_j$,
where $S_i$ (respectively, $S_j$) is the simple projective $\Lambda^i$-module (respectively, simple projective $\Lambda^j$-module). 
We may suppose that $ul+i\leq vl+j$.
Using Corollary \ref{kakan2} repeatedly, 
we have
\begin{eqnarray*}
{\dd}({\T}_{i-1}^+\ldots{\T}_{1}^+{\bf C}^{u}(X))&=& t_{i-1}\ldots t_1{\bf c}^u({\dd} X)\\
&=&{\bf e}_i,
\end{eqnarray*}
where ${\bf c}=t_l\ldots t_1$. 
Similarly we have 
\begin{eqnarray*}
{\dd}({\T}_{i-1}^+\ldots{\T}_{1}^+{\bf C}^{u}(Y))&=& t_{i-1}\ldots t_1{\bf c}^u({\dd} Y)
\end{eqnarray*}
By ${\dd} X = {\dd} Y$, we have ${\dd}({\T}_{i-1}^+\ldots{\T}_{1}^+{\bf C}^{u}(Y))={\bf e}_i$. 
Hence we have ${\T}_{i-1}^+\ldots{\T}_{1}^+{\bf C}^{u}(Y)\cong S_i$, and $u=v$ and $i=j$. 
Therefore we have
$${\T}_{i-1}^+\ldots{\T}_{1}^+{\bf C}^{u}(X) \cong S_i \cong {\T}_{i-1}^+\ldots{\T}_{1}^+{\bf C}^{u}(Y).$$
By Corollary \ref{nearly}, we obtain
$X \cong Y$.
\end{proof}

\section{A criterion for cluster-roots}\label{criterion}

In previous sections, we investigate some properties of cluster-roots. 
As we can observe in Example \ref{exam4}, 
positive roots are not necessarily cluster-roots. 
Therefore it is natural to study a criterion for cluster-roots. 
In this section, we will show that cluster-roots have the $\Phi$-sign-coherent property and, if $n$ is even, they have $\Phi$-positive property. 
We conjecture that the converse is true for $n=2$ and show this for a certain class of algebras.

First, we introduce the following notions.
 
\begin{defi}\label{C-positive def}
Let $\Lambda$ be an $n$-representation-finite algebra and $\Phi$ be the Coxeter transformation of $\Lambda$.
We call $x \in \mathbb{Z}^l$ \emph{$\Phi$-sign-coherent} if $\Phi^{m}(x)\in \mathbb{Z}^l_{\geq0}$ or $\Phi^{m}(x)\in \mathbb{Z}^l_{\leq0}$ for any $m \in \mathbb{Z}$ and 
call $x \in \mathbb{Z}^l$ \emph{$\Phi$-positive} if $\Phi^{m}(x)\in \mathbb{Z}^l_{\geq0}$ for any $m \in \mathbb{Z}$.  
\end{defi}
By Proposition \ref{finite order}, the Coxeter transformation has finite order. Therefore 
$\Phi$-sign-coherent ($\Phi$-positive) property can be checked by finitely many calculations. 

Then we have the following consequence.

\begin{theorem}\label{C-positive}
Let $\Lambda$ be an $n$-representation-finite algebra. Then cluster-roots are $\Phi$-sign-coherent. Moreover, if $n$ is even, then cluster-roots are $\Phi$-positive. 
\end{theorem}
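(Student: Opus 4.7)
The plan is to compute the full $\Phi$-orbit of $x = \dd X$ for a cluster-indecomposable module $X$ and show that every iterate equals $\pm\, \dd Y$ for some cluster-indecomposable $Y$, with a sign controlled by the parity of $n$.

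By Proposition \ref{cluster proper} (1) I will write $X \cong \tau_n^a I_i$ for some $0 \leq a \leq m_i$, with $\tau_n^{m_i}I_i \cong P_{\sigma(i)}$. For $0 \leq b \leq m_i - a$ the module $\tau_n^{a+b}I_i$ is cluster-indecomposable, and non-projective whenever $b < m_i - a$, so iterating Lemma \ref{tau=c} (1) yields $\Phi^b(x) = \dd(\tau_n^{a+b}I_i) \in \mathbb{Z}^l_{\geq 0}$, and at $b = m_i - a$ I reach $\dd P_{\sigma(i)}$. The delicate step is the next one, since $P_{\sigma(i)}$ is projective and Lemma \ref{tau=c} does not apply. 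Here I invoke Lemma \ref{Euler form} (1) together with the definition $\Phi = (-1)^n C_\Lambda^t C_\Lambda^{-1}$ to obtain the identity $\Phi(\dd P_j) = (-1)^n \dd I_j$ for every $j \in Q_0$; hence $\Phi^{m_i - a + 1}(x) = (-1)^n \dd I_{\sigma(i)}$.

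I then continue traversing the chain of $I_{\sigma(i)}$ via Lemma \ref{tau=c} (1) and $\mathbb{Z}$-linearity of $\Phi$, rescaled globally by $(-1)^n$, until I arrive at $(-1)^n \dd P_{\sigma^2(i)}$. Applying $\Phi$ once more multiplies the sign by another $(-1)^n$, and so on. Since $\sigma$ is a permutation of the finite set $Q_0$, only finitely many chains $\{I_{\sigma^j(i)}, \tau_n I_{\sigma^j(i)}, \ldots, P_{\sigma^{j+1}(i)}\}$ are ever visited, and every non-negative iterate $\Phi^m(x)$ equals $(-1)^{nk}\, \dd Y_m$, where $k$ counts the projective-to-injective transitions performed so far and $Y_m$ is cluster-indecomposable. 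For negative $m$ I reduce to the non-negative case via Proposition \ref{finite order}: there is $d$ with $\Phi^d = \mathbf{1}$, so $\Phi^m = \Phi^{m + dN}$ for $N$ large.

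Since each $\dd Y_m$ lies in $\mathbb{Z}^l_{\geq 0}$, every $\Phi^m(x)$ lies in $\mathbb{Z}^l_{\geq 0}$ or $\mathbb{Z}^l_{\leq 0}$, which is exactly $\Phi$-sign-coherence. When $n$ is even, $(-1)^{nk} = 1$ for all $k$, so $\Phi^m(x) \in \mathbb{Z}^l_{\geq 0}$ for every $m$, giving $\Phi$-positivity. The main obstacle in this plan is getting the sign right at the projective-to-injective transitions: this is the unique place Lemma \ref{tau=c} fails to apply, and it is precisely where the parity of $n$ enters the conclusion via the factor $(-1)^n$ in Lemma \ref{Euler form} (1).
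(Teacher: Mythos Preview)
Your argument is correct and follows the same route as the paper's proof: use Lemma~\ref{tau=c} to identify $\Phi(\dd X)$ with $\dd(\tau_n X)$ whenever $X$ is non-projective cluster-indecomposable, and use Lemma~\ref{Euler form} (1) together with the definition of $\Phi$ to get $\Phi(\dd P_j) = (-1)^n \dd I_j$ at the projective step. The paper's proof is extremely terse (three sentences) and leaves the iteration, the sign bookkeeping $(-1)^{nk}$, and the treatment of negative $m$ to the reader; you have simply spelled these out, including the appeal to Proposition~\ref{finite order} to reduce negative powers to non-negative ones.
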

\begin{proof}
For any non-projective cluster-indecomposable $\La$-module $X$, 
we obtain $\Phi(\dd X) = \dd(\tau_n X)$ by Lemma \ref{tau=c}. 
Thus, we have $\Phi(\dd X)\in \mathbb{Z}^l_{\geq0}$.
On the other hand, for an indecomposable projective module $P_i$,
we have $\Phi (\dd P_i)=(-1)^n\dd I_i$ by Lemma \ref{Euler form}.
Thus the statement follows.
\end{proof} 

Now we pose the following conjecture.
 
\begin{conj}\label{conjecture}
Let $\Lambda$ be a $2$-representation-finite algebra.
Then $\Phi$-positive roots are cluster-roots. 
\end{conj}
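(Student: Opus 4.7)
The plan is to show that every $\Phi$-positive root $x$ can be written as $\mathbf{c}^{-u}\mathbf{p}_i$ for some $u \geq 0$ and $i \in Q_0$, since then Theorem~\ref{form}~(1), combined with Theorem~\ref{uniqueness}, forces $x$ to be the dimension vector of a (necessarily unique) cluster-indecomposable module, hence a cluster-root.

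First I would study the $\Phi$-orbit of $x$. By Proposition~\ref{finite order} the order of $\Phi$ is finite, so this orbit is finite, and the $\Phi$-positivity hypothesis keeps every element in $\mathbb{Z}^l_{\geq 0}$; by Lemma~\ref{Euler form}~(2) each remains a root of $q_\Lambda$. The problem therefore reduces to showing that at least one element of the orbit equals $\mathbf{p}_i = \dd P_i$ for some $i \in Q_0$. Equivalently, using the factorization $\Phi = \mathbf{c} = t_l \cdots t_1$ and Lemma~\ref{t_i}, one should be able to reduce $x$, by successive application of the individual reflection transformations $t_i$ across the $2$-APR tilted algebras $\Lambda^i$, to a standard basis vector $\mathbf{e}_j$ which represents a simple projective in some $\Lambda^j$.

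The key technical ingredient would be a complexity measure on positive roots that strictly decreases under the reflection transformations when the current vector is not already a simple root. A natural candidate is the $\ell^1$-norm, or a refinement of it taking into account the Tits form of the current $\Lambda^i$. Establishing such a descent, together with the $\Phi$-positivity hypothesis to ensure intermediate vectors stay non-negative at the relevant stages, would let one conclude by induction. Once a reduction to $\mathbf{e}_j$ is achieved, unwinding via the reflection functors $\T_i^-$ in the spirit of Corollary~\ref{description} produces the sought cluster-indecomposable $\Lambda$-module with dimension vector $x$.

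For the iterated tilted case a shortcut is available: one can use the derived equivalence with a hereditary algebra $H$ and the compatibility of Happel's functor (Lemma~\ref{tilt theorem}) with the Coxeter action, transporting a Gabriel-type indecomposable for $H$ back to $\mod\Lambda$ and checking that $\Phi$-positivity forces the transported module to lie in the $2$-cluster tilting subcategory. In the general case, however, the main obstacle is precisely that $\Phi$-positivity is a purely numerical condition while membership in the cluster tilting orbit is a module-theoretic one; without an ambient hereditary framework, controlling the descent argument is delicate, because a priori a $\Phi$-positive root could belong to the orbit of a non-cluster positive root (the injection in Theorem~\ref{main result}~(2) is not surjective in general). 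Ruling this out seems to require either a new module-theoretic construction that builds a cluster-indecomposable module directly from the linear data of $x$, or a structural result about $\Phi$-orbits of positive non-cluster roots showing that they cannot remain non-negative throughout. This gap between linear-algebraic positivity and representation-theoretic realizability is exactly what keeps the conjecture open outside the iterated tilted class.
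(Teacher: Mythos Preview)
The statement you are addressing is a \emph{conjecture}: the paper does not prove it in general, and explicitly leaves it open. What the paper does establish is the special case where every $2$-APR tilted algebra $\Lambda^i$ is representation-directed (Theorem~\ref{directed}), from which the iterated tilted case (Corollary~\ref{iterated}) follows because derived equivalence to a Dynkin path algebra forces representation-directedness. So your concluding paragraph, identifying the gap between numerical $\Phi$-positivity and module-theoretic realizability as the obstruction in general, is accurate and matches the paper's own position.

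Your proposed descent argument for the general case is not just delicate but genuinely unworkable as stated. For $n=2$ the transformation $t_i=\delta_i s_i$ flips the sign of the $i$-th coordinate after reflecting, and there is no reason for the $\ell^1$-norm (or any obvious refinement via the Tits form) to decrease: unlike the hereditary case, the symmetrized Euler form here is not positive definite in general, so the usual root-system combinatorics that drives BGP-style descent is absent. The paper makes no attempt along these lines.

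For the iterated tilted case your shortcut via the derived equivalence with a hereditary algebra is different in spirit from the paper's argument, but also much vaguer. The paper's route is: Bongartz' theorem produces an honest indecomposable $\Lambda$-module $X$ with $\underline\dim X = x$; the crucial Lemma~\ref{t gives} then uses representation-directedness (specifically, the absence of cycles through $\tau_2^-P_1$) together with the $\Phi$-positivity of $x$ to force $X\in\mathcal{F}_0(T_1)$ whenever $X\not\cong P_1$; iterating this through the $\Lambda^i$ feeds into Lemma~\ref{tilt cluster}, which reconstructs $X$ as a cluster-indecomposable via reflection functors. Your proposal to ``transport a Gabriel-type indecomposable back and check it lies in the cluster tilting subcategory'' hides exactly this step: the check is the entire content, and the paper's method of supplying it---via Bongartz plus the directedness argument in Lemma~\ref{t gives}---is what you are missing.
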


We refer to \cite{HI2} for the structure of $2$-representation-finite algebras. 

\begin{exam}\label{exam4}
Let $\Lambda$ be the algebra of Example \ref{exam1}.
Then the Euler form $q_\La$ was given in Example \ref{exam t} and
all positive roots of $q_\Lambda$ are given as follows
\[
\left( \begin{smallmatrix}
x_1  \\
x_2 \\
x_3 \\
x_4 \\
x_5 \\
x_6
\end{smallmatrix} \right)=
\left( \begin{smallmatrix}
0  \\
0 \\
0 \\
0 \\
0 \\
1
\end{smallmatrix}\right)_,
\left( \begin{smallmatrix}
0  \\
0 \\
0 \\
0 \\
1 \\
1
\end{smallmatrix} \right)_,
\left( \begin{smallmatrix}
0  \\
0 \\
0 \\
1 \\
0 \\
0
\end{smallmatrix} \right)_,
\left( \begin{smallmatrix}
0  \\
0 \\
0 \\
1 \\
1 \\
0
\end{smallmatrix} \right)_,
\left( \begin{smallmatrix}
0  \\
0 \\
1 \\
0 \\
1 \\
1
\end{smallmatrix} \right)_,
\left( \begin{smallmatrix}
0  \\
1 \\
0 \\
1 \\
0 \\
0
\end{smallmatrix} \right)_,
\left( \begin{smallmatrix}
0  \\
1 \\
1 \\
1 \\
1 \\
0
\end{smallmatrix}\right)_,
\left( \begin{smallmatrix}
1  \\
0 \\
0 \\
0 \\
0 \\
0
\end{smallmatrix} \right)_,\]
\[\left( \begin{smallmatrix}
1  \\
1 \\
0 \\
0 \\
0 \\
0
\end{smallmatrix} \right)_,
\left( \begin{smallmatrix}
1  \\
1 \\
1 \\
0 \\
0 \\
0
\end{smallmatrix} \right)_,
\left( \begin{smallmatrix}
0  \\
1 \\
0 \\
0 \\
0 \\
0
\end{smallmatrix} \right)_,
\left( \begin{smallmatrix}
0  \\
0 \\
1 \\
0 \\
0 \\
0
\end{smallmatrix} \right)_,
\left( \begin{smallmatrix}
0  \\
0 \\
0 \\
0 \\
1 \\
0
\end{smallmatrix} \right)_,
\left( \begin{smallmatrix}
0  \\
1 \\
1 \\
0 \\
0 \\
0
\end{smallmatrix} \right)_,
\left( \begin{smallmatrix}
0  \\
1 \\
1 \\
1 \\
0 \\
0
\end{smallmatrix} \right)_,
\left( \begin{smallmatrix}
0  \\
0 \\
1 \\
1 \\
1 \\
0
\end{smallmatrix} \right)_,\left( \begin{smallmatrix}
0  \\
0 \\
1 \\
0 \\
1 \\
0
\end{smallmatrix} \right)_.\]

We can check that $\Phi$-positive roots are given as follows
\[
\left( \begin{smallmatrix}
0  \\
0 \\
0 \\
0 \\
0 \\
1
\end{smallmatrix}\right)_,
\left( \begin{smallmatrix}
0  \\
0 \\
0 \\
0 \\
1 \\
1
\end{smallmatrix} \right)_,
\left( \begin{smallmatrix}
0  \\
0 \\
0 \\
1 \\
0 \\
0
\end{smallmatrix} \right)_,
\left( \begin{smallmatrix}
0  \\
0 \\
0 \\
1 \\
1 \\
0
\end{smallmatrix} \right)_,
\left( \begin{smallmatrix}
0  \\
0 \\
1 \\
0 \\
1 \\
1
\end{smallmatrix} \right)_,
\left( \begin{smallmatrix}
0  \\
1 \\
0 \\
1 \\
0 \\
0
\end{smallmatrix} \right)_,
\left( \begin{smallmatrix}
0  \\
1 \\
1 \\
1 \\
1 \\
0
\end{smallmatrix}\right)_,
\left( \begin{smallmatrix}
1  \\
0 \\
0 \\
0 \\
0 \\
0
\end{smallmatrix} \right)_,\left( \begin{smallmatrix}
1  \\
1 \\
0 \\
0 \\
0 \\
0
\end{smallmatrix} \right)_,
\left( \begin{smallmatrix}
1  \\
1 \\
1 \\
0 \\
0 \\
0
\end{smallmatrix} \right)_.
\]
Then we can verify that they coincide with cluster-roots given in Example \ref{exam1}. 
Thus, the conjecture is true in this case.
\end{exam}

\begin{remk}
It is known that all roots of the Euler form of a Dynkin quiver are either 
non-negative roots or non-positive roots (for example \cite[VII.Lemma 4.8]{ASS}). 
Hence all roots are $\Phi$-sign-coherent in this case.
\end{remk}

\begin{remk}
For the case $n>2$, the $\Phi$-positivity is not enough to characterize cluster-roots.
For example, let $\La$ be the 4-representation-finite algebra given in Example \ref{exam0} (1). 
Then one can check that $\dd S_3={\bf e}_3$ is a $\Phi$-positive root of $q_\La$, while it is not a cluster-root.
\end{remk}

Throughout this section, unless otherwise specified, let $\Lambda= KQ/(R)$ be a $2$-representation-finite algebra with an admissible numbering $(1\ldots l)$ and with a minimal set $R$ of relations. We denote by $\La^1:=\La,\La^2:=\End_{\La^1}(T_1),\ldots,\La^{l+1}:=\End_{\La^{l}}(T_{l})$ the $2$-APR tilted algebras for $i\in Q_0$. 

To state the main result of this section, we introduce the following definition \cite{HR2}.

\begin{defi}\label{rep-direct}
Let $\Lambda$ be a finite dimensional algebra.
A \emph{path} in $\mod\Lambda$ is a sequence
$$X_0\xrightarrow{f_1}X_1\to\cdots\to X_{c-1}\xrightarrow{f_c}X_c$$
of nonzero non-isomorphisms $f_1,\ldots,f_c$ between indecomposable $\Lambda$-modules
$X_0,\ldots,X_c$. 
A path in $\mod\Lambda$ is called a \emph{cycle} if $X_0\cong X_c$. An algebra $\Lambda$ is called \emph{representation-directed} if there is no cycle in $\mod\La$.
\end{defi}

The aim of this section is to show the following theorem. 

\begin{theorem}\label{directed}
Let $\Lambda$ be a 2-representation-finite algebra.
If $\Lambda^i$ is representation-directed for any $i\in Q_0$, then Conjecture \ref{conjecture} is true.
\end{theorem}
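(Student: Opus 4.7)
The plan is to invoke the Bongartz--Dr\"axler theorem from \cite{B2,D}: for any representation-directed algebra $\Gamma$ of global dimension at most $2$, the map $Y\mapsto\dd Y$ gives a bijection between iso classes of indecomposable $\Gamma$-modules and positive roots of $q_\Gamma$. By hypothesis, this applies to every $\La^i$. Hence for a $\Phi$-positive root $x$ of $q_\La$, there is a unique indecomposable $\La$-module $X$ with $\dd X=x$, and one must show that $X$ is cluster-indecomposable.

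I would proceed by induction using the $2$-APR tilts, reducing $x$ to a unit vector via reflection transformations. The key identity is
\[
t_1(y)_1=\Phi(y)_1,
\]
valid for all $y\in\Z^l$: using Lemma \ref{Euler form}(2) together with the fact that $S_1=P_1$ is simple projective (so that $\langle{\bf e}_1,z\rangle=z_1$ for every $z$), one computes $2(y,{\bf e}_1)=\langle y,{\bf e}_1\rangle+\langle{\bf e}_1,y\rangle=\Phi(y)_1+y_1$, whence $t_1(y)_1=(-1)^{n-1}(y_1-2(y,{\bf e}_1))=\Phi(y)_1$ for $n=2$. Since $t_1$ fixes the remaining coordinates, $t_1(\Phi^m(x))\geq 0$ for every $m\in\Z$ by $\Phi$-positivity of $x$. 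Combined with the conjugation $\Phi_{\La^2}=t_1\Phi_\La t_1^{-1}$ (a direct consequence of Corollary \ref{c=-C}), this yields $\Phi_{\La^2}^m(t_1(x))=t_1(\Phi_\La^m(x))\geq 0$ for all $m$, so $t_1(x)$ is a $\Phi_{\La^2}$-positive root of $q_{\La^2}$ by Lemma \ref{t_i}. Bongartz--Dr\"axler at $\La^2$ then produces a unique indecomposable $\La^2$-module $Y'$ with $\dd Y'=t_1(x)$.

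Next, I would identify $Y'\cong\T_1^+X$ (equivalently $X\in\FF_0(T_1)$) via Proposition \ref{kakan1}. Iterating this step along the admissible ordering and cycling through $\La^{l+1}\cong\La$ via Proposition \ref{rotation}, the finite order of $\Phi$ (Proposition \ref{finite order}) forces the reduction to reach a unit vector ${\bf e}_j$ in some $\La^j$ after finitely many steps. The terminal unit vector is the dimension vector of the simple projective $S_j$ of $\La^j$, so unwinding via $\T^-$-functors and invoking Corollary \ref{description} yields $X\cong{\bf c}^{-u}\T_1^-\cdots\T_{j-1}^-(S_j)$, exhibiting $X$ as cluster-indecomposable.

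The principal obstacle is the identification $Y'\cong\T_1^+X$ at each step, which amounts to the Ext-vanishing $\Ext^1_\La(T_1,X)=0$ (the vanishing $\Ext^2_\La(T_1,X)\cong D\overline{\Hom}(X,P_1)=0$ for $X\ncong P_1$ comes directly from Lemma \ref{CT and T}). By the derived equivalence of Lemma \ref{tilt theorem}, $\RT_1^+X$ is indecomposable in $\DD^b(\mod\La^2)$ with cohomology $H^0\cong\T_1^+X$ and $H^1\cong\Ext^1_\La(T_1,X)$, and the Grothendieck-group identity $\dd(\T_1^+X)-\dd(\Ext^1_\La(T_1,X))=t_1(x)$ combined with the Bongartz--Dr\"axler uniqueness at $\La^2$—available precisely because $\La^2$ is representation-directed—forces $\Ext^1_\La(T_1,X)=0$ and $\T_1^+X\cong Y'$. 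Making this deduction rigorous, and then propagating the argument along the full admissible ordering, is the main technical content of the proof.
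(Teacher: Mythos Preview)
Your overall architecture matches the paper's: produce $X$ via Bongartz, show $X\in\FF_0(T_1)$ (or $X\cong P_1$) so that $\T_1^+X$ inherits the hypotheses over $\La^2$, and iterate until a simple projective is reached. The divergence is entirely in the one step you yourself flag as ``the main technical content'', and the mechanism you propose for it does not work.

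Knowing that $\dd(\T_1^+X)-\dd(\Ext^1_\La(T_1,X))=t_1(x)$ and that $t_1(x)$ is the dimension vector of a \emph{unique indecomposable} $\La^2$-module does not force $\Ext^1_\La(T_1,X)=0$. Bongartz--Dr\"axler uniqueness constrains indecomposables, not arbitrary differences of dimension vectors; nothing prevents $\T_1^+X$ from being decomposable with one summand matching $\Ext^1_\La(T_1,X)$ on the Grothendieck group. Indecomposability of $\RT_1^+X$ in $\DD^b(\mod\La^2)$ does not help either: for algebras of global dimension $2$, indecomposable objects of the derived category need not be stalk complexes, and representation-directedness alone does not change this. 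So the deduction you hope to make rigorous cannot be made rigorous along those lines.

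The paper closes this gap by a different argument that uses directedness in an essential way (Lemma~\ref{t gives}). Since $\Ext^2_\La(\tau_2^-P_1,X)=0$ as you note, one has
\[
(t_1x)_1=\dim\Hom_\La(\tau_2^-P_1,X)-\dim\Ext^1_\La(\tau_2^-P_1,X).
\]
If $\Ext^1\neq0$, Auslander--Reiten duality yields a nonzero map $X\to\tau(\tau_2^-P_1)$, hence a path $X\rightsquigarrow\tau(\tau_2^-P_1)\rightsquigarrow\tau_2^-P_1$ via the almost split sequence ending in $\tau_2^-P_1$. Representation-directedness of $\La$ then forbids any path back, so $\Hom_\La(\tau_2^-P_1,X)=0$ and $(t_1x)_1<0$, contradicting $\Phi$-positivity via Lemma~\ref{c-positive}. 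This is precisely where the directedness hypothesis on each $\La^i$ is consumed.

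A secondary point: termination of the iteration is not a consequence of $\Phi$ having finite order. That only says the \emph{vector} orbit is periodic; a nontrivial cluster-root cycles back to itself without ever reaching a unit vector. Once membership in $\FF_0$ is established at every step, termination comes instead from nilpotency of $\tau_2$ (equivalently of ${\bf C}^+$), exactly as in Lemma~\ref{tilt cluster}.
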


As a corollary, we obtain the following result.

\begin{cor}\label{iterated}
Let $\Lambda$ be a $2$-representation-finite algebra.
If $\Lambda$ is iterated tilted, then Conjecture \ref{conjecture} is true.
\end{cor}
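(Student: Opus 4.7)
The plan is to reduce the statement to Theorem \ref{directed} by verifying that every iterate $\La^i$ in the chain of $2$-APR tilts of $\La$ is representation-directed. First I would observe that a $2$-APR tilting module is a classical tilting module of projective dimension at most $2$ (this is the proposition recalled just before Proposition \ref{n-apr}), so Lemma \ref{tilt theorem} gives a derived equivalence $\DD^b(\mod\La^i)\simeq\DD^b(\mod\La^{i+1})$ for every $i$. Composing these equivalences shows that if $\La=\La^1$ is iterated tilted from a hereditary algebra $H$, then every $\La^i$ is also iterated tilted from the same $H$.

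Second, I would identify $H$ as a Dynkin path algebra. Since $\La$ is $2$-representation-finite, it is in particular representation-finite (a standard consequence of Iyama's theory, using Proposition \ref{cluster proper}, which controls the whole module category through finitely many $\tau_2$-orbits on the indecomposable summands of the $2$-cluster tilting module). The classical Happel--Ringel structure theorem for iterated tilted algebras of finite representation type then forces $H\cong KQ$ for some Dynkin quiver $Q$. It is a well-known consequence (see for example Happel's book on triangulated categories) that any iterated tilted algebra of Dynkin type is representation-directed: its module category embeds into $\DD^b(\mod KQ)$, whose Auslander--Reiten quiver is the acyclic repetition quiver $\mathbb Z Q$, so no cycle of non-isomorphisms between indecomposables can exist. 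Applying this to each $\La^i$, we conclude that every $\La^i$ is representation-directed.

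Third, with every $\La^i$ representation-directed, Theorem \ref{directed} applies directly to $\La$ and yields Conjecture \ref{conjecture} for $\La$, which is exactly the statement of Corollary \ref{iterated}.

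The main obstacle I foresee is the identification of $H$ as being of Dynkin type: this relies on two standard but nontrivial facts, namely that a $2$-representation-finite algebra is representation-finite and that a representation-finite iterated tilted algebra must come from a Dynkin quiver (Happel--Ringel). Once these ingredients are in place, the argument is essentially formal: the stability of the iterated-tilted property under $2$-APR tilting is immediate from Lemma \ref{tilt theorem}, the representation-directedness of iterated tilted algebras of Dynkin type is classical, and Theorem \ref{directed} then closes the case.
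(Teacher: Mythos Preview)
Your overall strategy matches the paper's: reduce to Theorem \ref{directed} by checking that each $\La^i$ is representation-directed, and for this it suffices to show that $\La$ (hence every $\La^i$, by the derived equivalences you correctly set up) is derived equivalent to a Dynkin path algebra.

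The gap is in your identification of the hereditary type. You claim that a $2$-representation-finite algebra is automatically representation-finite, appealing to Proposition \ref{cluster proper}. This is false: Proposition \ref{cluster proper} only describes the indecomposable summands of the $n$-cluster tilting module via $\tau_n$-orbits, not the whole of $\mod\La$. The paper itself provides a counterexample in Example \ref{exam3}(2): the Auslander algebra of $A_6$ is $2$-representation-finite but has infinitely many indecomposable modules. So you cannot invoke the Happel--Ringel characterisation of representation-finite iterated tilted algebras to force Dynkin type.

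The paper closes this gap differently. It cites \cite[Theorem 3.12]{IO2} (see also \cite{Rin2}), which gives directly that a $2$-representation-finite algebra which is iterated tilted is derived equivalent to a path algebra of a Dynkin quiver. This is a genuinely nontrivial structural input specific to $2$-representation-finiteness, not something that follows from elementary finiteness considerations. Once this is in hand, your remaining steps (each $\La^i$ is derived equivalent to the same Dynkin path algebra, hence representation-directed, hence Theorem \ref{directed} applies) are exactly what the paper does.
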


In this case, by Bongartz' theorem (Theorem \ref{B}), we have the following correspondence.
\begin{eqnarray*}
\{\text{Indecomposable modules}\}&\overset{1-1}{\longleftrightarrow }&\{\text{Positive roots}\}\\
\bigcup\ \ \ \ \ \ \ \ \ \ \ \ \ \ \ \ \ \ & &\ \ \ \ \ \ \ \ \ \bigcup\\
\{\text{Cluster-indecomposable modules}\}&\overset{1-1}{\longleftrightarrow }&  \{\text{$\Phi$-positive roots}\}.
\end{eqnarray*}

For a proof of Theorem \ref{directed}, we investigate some properties of $\Phi$-positive roots.

\begin{lemm}\label{c-positive}
Let $\Lambda$ be a 2-representation-finite algebra, 
$\Phi$ be the Coxeter transformation of $\La$ and $t_i$ be the reflection transformation associated with 
$i\in Q_0$ of $\Lambda^i$.
For $x\in\mathbb{Z}^l$,
the following conditions are equivalent.
\begin{itemize}
\item[(1)]$x$ is $\Phi$-positive.
\item[(2)]$t_i\ldots t_1\Phi^m(x)\in \mathbb{Z}^l_{\geq0}$ for any $i\in Q_0$ and $m\in\mathbb{Z}$.
\end{itemize}
\end{lemm}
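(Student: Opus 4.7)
The plan is to exploit the fact that each reflection transformation $t_i = \delta_i s_i$ modifies only the $i$-th coordinate of a vector, since both $\delta_i$ and $s_i(z) = z - 2(z,e_i)e_i$ act as the identity on every coordinate different from $i$. Once this observation is recorded, both implications reduce to a telescoping argument combined with the identity $\Phi = t_l \ldots t_1$ from Corollary \ref{c=-C}.

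For the direction $(2)\Rightarrow(1)$, I would simply specialize $i = l$ in (2): this gives $t_l \ldots t_1 \Phi^m(x) = \Phi^{m+1}(x) \in \mathbb{Z}^l_{\geq 0}$ for every $m \in \mathbb{Z}$, which is exactly the content of (1).

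For $(1)\Rightarrow(2)$, fix $m \in \mathbb{Z}$, set $y := \Phi^m(x)$, and define $y^{(i)} := t_i \ldots t_1(y)$ for $0 \leq i \leq l$, so that $y^{(0)} = y$ and $y^{(l)} = \Phi(y) = \Phi^{m+1}(x)$. By hypothesis (1), both endpoints $y^{(0)}$ and $y^{(l)}$ lie in $\mathbb{Z}^l_{\geq 0}$. The coordinate-preservation observation now yields the following two facts for the $j$-th coordinate of $y^{(i)}$: if $j > i$ then none of $t_1,\ldots,t_i$ has touched coordinate $j$, so $y^{(i)}_j = y^{(0)}_j = y_j \geq 0$; if $j \leq i$ then none of $t_{j+1}, \ldots, t_l$ touches coordinate $j$, so $y^{(i)}_j = y^{(j)}_j = y^{(l)}_j \geq 0$. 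In both cases the entry is non-negative, so $y^{(i)} \in \mathbb{Z}^l_{\geq 0}$, establishing (2).

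The argument is purely structural and uses no property of $\La$ beyond Corollary \ref{c=-C}; in particular, the $2$-representation-finite hypothesis plays no substantive role in the proof and is present only because it is the standing assumption of the section. There is no real obstacle here: the only non-trivial ingredient is the coordinate-stability of the $t_i$, and everything else is a telescoping bookkeeping that matches the $j$-th coordinate of the intermediate vector $y^{(i)}$ either with the corresponding coordinate of $y$ or with that of $\Phi(y)$, depending on whether $j$ lies above or below $i$.
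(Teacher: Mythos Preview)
Your proof is correct and follows essentially the same approach as the paper: both hinge on the observation that each $t_j$ alters only the $j$-th coordinate, and use this to match every coordinate of the intermediate vector $t_i\ldots t_1\Phi^m(x)$ with the corresponding coordinate of either $\Phi^m(x)$ or $\Phi^{m+1}(x)$. The only cosmetic difference is that the paper phrases the implication $(1)\Rightarrow(2)$ as a contrapositive argument, whereas you give the equivalent direct telescoping argument.
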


\begin{proof}
By Corollary \ref{c=-C}, we have $\Phi=t_l\ldots t_1$.
Then clearly (2) implies (1).
Assume that $t_i\ldots t_1\Phi^m(x)\notin \mathbb{Z}^l_{\geq0}$ for some $i\in Q_0$ and $m\in\mathbb{Z}$.
Then there exists $p\in Q_0$ such that
$(t_i\ldots t_1\Phi^m(x))_p<0$, where $(t_i\ldots t_1\Phi^m(x))_p$ denotes by the $p$-th coordinate.

We assume that $p\leq i$.  
Since $t_j$ changes the only $j$-th coordinate for any $j\in Q_0$, we have
$$(\Phi^{m+1}(x))_p=(t_l\dots t_{i+1}(t_i\ldots t_1\Phi^m(x)))_p<0.$$
Similarly, if $p>i$, then we have $(\Phi^{m}(x))_p<0$.
Thus (1) implies (2).
\end{proof}

Next we give bijections between $\Phi_{\La^{i}}$-positive roots of $q_{\La^{i}}$ for any $i\in Q_0$.

\begin{prop}\label{number root}
We have the following bijections for any $i\in Q_0$
$$\{\Phi_{\La^i}\text{-positive roots of }q_{\La^i}\}\overset{1-1}{\longleftrightarrow }\{\Phi_{\La^{i+1}}\text{-positive roots of }q_{\La^{i+1}}\}$$ 
given by $$x\longmapsto t_ix.\ \ $$
\end{prop}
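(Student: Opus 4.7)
The plan is to exhibit $x\mapsto t_i x$ and $y\mapsto t_i^{-1}y$ as mutually inverse bijections between the two sets. Two compatibilities between $\La^i$ and $\La^{i+1}$ drive the argument. First, by the iterated construction of the $2$-APR tilts, the admissible numberings of $\La^i$ and $\La^{i+1}$ are the cyclic shifts $(i,i+1,\ldots,l,1,\ldots,i-1)$ and $(i+1,\ldots,l,1,\ldots,i)$, so Corollary \ref{c=-C} gives
$$\Phi_{\La^i} = (t_{i-1}\cdots t_1 t_l\cdots t_{i+1})\, t_i, \qquad \Phi_{\La^{i+1}} = t_i\,(t_{i-1}\cdots t_1 t_l\cdots t_{i+1}),$$
and hence the conjugation relation $\Phi_{\La^{i+1}} = t_i\,\Phi_{\La^i}\,t_i^{-1}$. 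Second, Lemma \ref{t_i}(1), combined with $s_i$-invariance of $q_{\La^i}$ and the factorisation $t_i^{-1}=s_i\delta_i$, gives $q_{\La^{i+1}}(y)=q_{\La^i}(\delta_i y)=q_{\La^i}(s_i\delta_i y)=q_{\La^i}(t_i^{-1}y)$, so both $t_i$ and $t_i^{-1}$ send roots to roots.

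For the forward map, given a $\Phi_{\La^i}$-positive root $x$, Lemma \ref{t_i}(2) shows $t_i x$ is a root of $q_{\La^{i+1}}$; the conjugation relation gives $\Phi_{\La^{i+1}}^m(t_i x)=t_i\Phi_{\La^i}^m(x)$ for every $m\in\mathbb{Z}$, so it remains to show $t_i\Phi_{\La^i}^m(x)\ge 0$ for all $m$. But $i$ is the first vertex in the admissible numbering of $\La^i$, so this is exactly the shortest partial-product condition in Lemma \ref{c-positive} applied to $\La^i$, and it holds because $x$ is $\Phi_{\La^i}$-positive.

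For the inverse map, given a $\Phi_{\La^{i+1}}$-positive root $y$, the identity above shows $t_i^{-1}y$ is a root of $q_{\La^i}$, and the conjugation relation gives
$$\Phi_{\La^i}^m(t_i^{-1}y)=t_i^{-1}\Phi_{\La^{i+1}}^m(y)=(t_{i-1}\cdots t_1 t_l\cdots t_{i+1})\,\Phi_{\La^{i+1}}^{m-1}(y).$$
The right-hand side is the penultimate partial product in the admissible numbering of $\La^{i+1}$ applied to $\Phi_{\La^{i+1}}^{m-1}(y)$, hence non-negative for every $m-1\in\mathbb{Z}$ by Lemma \ref{c-positive} applied to $\La^{i+1}$. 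Since $m\mapsto m-1$ is a bijection on $\mathbb{Z}$, $t_i^{-1}y$ is $\Phi_{\La^i}$-positive. The two maps are manifestly mutual inverses on $\mathbb{Z}^l$, so the bijection follows. The main obstacle is correctly identifying the two cyclic-shifted admissible numberings so that the expressions $t_i$ and $t_{i-1}\cdots t_1 t_l\cdots t_{i+1}$ become precisely the shortest and penultimate partial products in Lemma \ref{c-positive} for $\La^i$ and $\La^{i+1}$ respectively; once this dictionary is set up, everything reduces to bookkeeping.
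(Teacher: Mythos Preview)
Your proof is correct and follows essentially the same approach as the paper: both use the conjugation relation between $\Phi_{\La^i}$ and $\Phi_{\La^{i+1}}$ coming from Corollary~\ref{c=-C}, Lemma~\ref{t_i} to pass roots across, and Lemma~\ref{c-positive} to transfer $\Phi$-positivity. The only organizational difference is that you verify $\Phi_{\La^{i+1}}^m(t_i x)\ge 0$ directly via the implication (1)$\Rightarrow$(2) of Lemma~\ref{c-positive} applied to $\La^i$, whereas the paper rewrites $\Phi_{\La^1}^m(x)$ as a partial product for $\La^2$ and appeals to Lemma~\ref{c-positive} for $\La^2$; your packaging is arguably cleaner since it uses the lemma in its literally stated direction.
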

 
\begin{proof}
We only have to show for $i=1$. 
Let $x$ be a $\Phi_{\Lambda^{1}}$-positive root of $q_{\Lambda^{1}}$.
We will show that $t_1 x$ is a $\Phi_{\Lambda^{2}}$-positive root of $q_{\Lambda^{2}}$.
By Proposition \ref{rotation},
we have $\Phi_{\Lambda^{2}} = t_{1}t_l\ldots t_2$.
Since $x$ is a $\Phi_{\Lambda^{1}}$-positive, we have
$$\Phi_{\Lambda^1}^m (x) = (t_l\ldots t_1)^m (x) = t_l\ldots t_2\Phi_{\Lambda^{2}}^{m-1}(t_1 x) \in \mathbb{Z}^l_{\geq0}$$
for any $m\in \mathbb{Z}$.
By Lemma \ref{t_i}, $t_1 x$ is a root of $q_{\Lambda^{2}}$ 
and by Lemma \ref{c-positive}, it is $\Phi_{\Lambda^{2}}$-positive. 
Similarly, for a $\Phi_{\Lambda^{2}}$-positive root $y$ of $q_{\Lambda^{2}}$,
we can show that $t_1^{-1} y$ is a $\Phi_{\Lambda^{1}}$-positive root.
This implies the assertion.
\end{proof}
 
The above result is an analogue of bijections between cluster-indecomposable modules of $\mod\Lambda^i$, where the correspondence are given by reflection functors (Corollary \ref{nearly}). 
Thus, Proposition \ref{number root} implies that $\Phi$-positive roots behave like cluster-indecomposable modules.

As a corollary, we obtain the following result.

\begin{cor}
If Conjecture \ref{conjecture} is true for $\Lambda$, then it is also true for $\Lambda^i$ for any $i\in Q_0$.
\end{cor}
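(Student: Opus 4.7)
The plan is to prove the corollary by induction on $i$, so it suffices to show that the conjecture for $\Lambda^i$ implies the conjecture for $\Lambda^{i+1}$. By renaming, I may assume $i=1$ and prove: if every $\Phi_\Lambda$-positive root of $q_\Lambda$ is a cluster-root, then every $\Phi_{\Lambda^2}$-positive root of $q_{\Lambda^2}$ is a cluster-root.

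Let $y$ be a $\Phi_{\Lambda^2}$-positive root of $q_{\Lambda^2}$. By Proposition \ref{number root} (applied in the reverse direction), $x:=t_1^{-1}y$ is a $\Phi_\Lambda$-positive root of $q_\Lambda$. By hypothesis, there is a cluster-indecomposable $\Lambda$-module $X$ with $\dd X=x$. I want to transport $X$ across the reflection functor to produce a cluster-indecomposable $\Lambda^2$-module of dimension vector $y$. There are two cases according to whether $X\cong P_1$ or not.

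If $X\not\cong P_1$, then by Corollary \ref{kakan2}(1) the module $\T_1^+X$ is a cluster-indecomposable $\Lambda^2$-module, and $\dd(\T_1^+X)=t_1(\dd X)=t_1x=y$, so $y$ is a cluster-root of $\Lambda^2$ as required. The main obstacle is the exceptional case $X\cong P_1$, where $\T_1^+X=0$ and Corollary \ref{kakan2} does not apply. Here the plan is: since $1$ is a sink of $\Lambda$, the projective $P_1$ is simple, so $\dd X={\bf e}_1$, and a direct computation using $({\bf e}_1,{\bf e}_1)=\langle{\bf e}_1,{\bf e}_1\rangle=\dim\Hom_\Lambda(P_1,P_1)=1$ (since $P_1$ is projective and $\gl\Lambda\le 2$) gives $s_1{\bf e}_1=-{\bf e}_1$ and hence $t_1{\bf e}_1=\delta_1(-{\bf e}_1)={\bf e}_1$ (using $n=2$, so $(-1)^{n-1}=-1$). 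Thus $y={\bf e}_1$.

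It remains to exhibit a cluster-indecomposable $\Lambda^2$-module of dimension vector ${\bf e}_1$. By Proposition \ref{n-apr}(3), vertex $1$ is a source of $\Lambda^2$ and the simple at vertex $1$ is injective; equivalently $I_1^{(2)}=S_1^{(2)}$ has dimension vector ${\bf e}_1$. Since $D\Lambda^2$ is a direct summand of the basic $2$-cluster tilting module by Proposition \ref{cluster proper}(1), $I_1^{(2)}$ is cluster-indecomposable, proving that $y={\bf e}_1$ is a cluster-root. This handles the exceptional case and completes the inductive step, hence the corollary.
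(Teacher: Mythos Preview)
Your argument is correct and follows exactly the same approach as the paper: reduce to the step $\Lambda\Rightarrow\Lambda^2$, pull back a $\Phi_{\Lambda^2}$-positive root via Proposition \ref{number root}, transport the resulting cluster-indecomposable module through $\T_1^+$, and handle the exceptional case $X\cong P_1$ by exhibiting the simple injective $\Lambda^2$-module at vertex $1$. One small citation slip: Corollary \ref{kakan2}(1) only asserts that $\T_1^+X$ is an \emph{indecomposable} $\Lambda^2$-module, not that it is cluster-indecomposable; for the latter you need Corollary \ref{nearly} (which gives the equivalence $\add(M/P_1)\cong\add(M'/I_1)$), exactly as the paper invokes both results together.
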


\begin{proof}
We only have to show for $i=2$.  
Let $y\in\mathbb{Z}^l$ be a $\Phi_{\Lambda^{2}}$-positive root of $q_{\Lambda^{2}}$. 
It is enough to show that there exists a cluster-indecomposable $\Lambda^2$-module $Y$ such that $\dd Y=y$.

By Proposition \ref{number root}, there exists $x\in\mathbb{Z}^l$ such that $y= t_1x$, where $x$ is a $\Phi_{\Lambda^{1}}$-positive root of $q_{\Lambda^{1}}$.
On the other hand, there exists a cluster-indecomposable $\Lambda$-module $X$ such that $\dd X= x$ by the assumption.
First, assume that $x\neq{\bf e}_1$, that is, $X\ncong P_1$.
Then, by Corollary \ref{nearly} and Corollary \ref{kakan2}, we have 
a cluster-indecomposable $\Lambda^2$-module $\T_1^+X$ such that $\dd(\T_1^+X)= t_1x$.
Next, assume that $x={\bf e}_1$ and hence $t_1x={\bf e}_1=y$.
Since the vertex 1 is a source of the quiver of $\Lambda^2$, we have the simple injective module $I_1$ and $\dd I_1=y$.
\end{proof}

Next, we prove the following key lemma.

\begin{lemm}\label{tilt cluster}
Let $\Lambda=KQ/(R)$ be an $n$-representation-finite algebra and $X$ be an indecomposable $\Lambda$-module.
For integers $u\geq0$ and $i\in Q_0$, if ${\T}_{i-1}^+\ldots{\T}_{1}^+{\bf C}^{u}(X)$ is either in $\FF_0(T_i)$ or isomorphic to the simple projective $\Lambda^i$-module $S_i$,
then $X$ is a cluster-indecomposable $\La$-module.
\end{lemm}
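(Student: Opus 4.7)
The plan is to identify $X$ with the cluster-indecomposable candidate $\bar X := {\bf C}^{-u}\T_1^-\ldots\T_{i-1}^-(S_i)$ suggested by Corollary \ref{description}. By Proposition \ref{proj form} (1) we have $\T_1^-\ldots\T_{i-1}^-(S_i) \cong P_i$, and by Proposition \ref{tau=coxeter2} this yields $\bar X \cong \tau_n^{-u}(P_i)$, which is cluster-indecomposable by Proposition \ref{cluster proper} (1). Setting $Y := \T_{i-1}^+\ldots\T_1^+({\bf C}^u X)$, it suffices to show $X \cong \bar X$ under the hypothesis on $Y$, treating the two cases separately.

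In the case $Y \cong S_i$, I would first compute the analogous chain on $\bar X$: each intermediate $\T_{j-1}^+\ldots\T_1^+({\bf C}^u \bar X) \cong P_i^{(j)}$ is cluster-indecomposable (by iterating Corollary \ref{nearly}) and, since $j < i$, lies in $\FF_0(T_j)$ by Lemma \ref{CT and T}. Consequently the non-derived chain $\T_{i-1}^+\ldots\T_1^+$ applied to ${\bf C}^u \bar X$ coincides with the derived chain $\RT_{i-1}^+\ldots\RT_1^+$ and produces $S_i$. Combining this with the hypothesis $Y \cong S_i$ and the derived equivalence of Lemma \ref{tilt theorem}, one identifies $X$ with $\bar X$ in $\DD^b(\mod\La)$, and hence as $\La$-modules (both being concentrated in degree zero). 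The remaining case $Y \in \FF_0(T_i)$ is handled by iteration: $Y' := \T_i^+(Y)$ is an indecomposable $\La^{i+1}$-module by Proposition \ref{kakan1}, and fits the hypothesis with parameters $(u, i+1)$ if $i < l$, or with parameters $(u+1, 1)$ (via the identification $\La^{l+1} \cong \La$ of Proposition \ref{rotation}) if $i = l$. Continuing this procedure and using the nilpotency of $\tau_n$ on $\add M$ (Proposition \ref{cluster proper} (5)), the iteration must eventually terminate with $Y' \cong S_j$, reducing to the first situation.

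The hard part will be the derived-equivalence step in the $Y \cong S_i$ case: while $\bar X$ produces $S_i$ cleanly because its intermediate reflections are cluster-indecomposable (and thus in $\FF_0$), showing $X \cong \bar X$ requires the non-derived and derived chains on ${\bf C}^u X$ to agree, which in turn demands that each intermediate $\T_j^+\ldots\T_1^+({\bf C}^u X)$ lies in $\FF_0(T_{j+1})$. This back-propagation of the $\FF_0$-property from the terminal hypothesis on $Y$, together with the indecomposability of $X$, is the technical core of the argument.
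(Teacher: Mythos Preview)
Your overall structure---iterate the reflection functors forward until you reach $Y\cong S_j$ at some pair $(v,j)$, then recover $X$ from $S_j$---is exactly what the paper does. But you overcomplicate the terminal step and misidentify the source of the back-propagation.

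Once your forward iteration lands on $Y_{(v,j)}:=\T_{j-1}^+\ldots\T_1^+{\bf C}^v(X)\cong S_j$, you have \emph{already} established along the way that every earlier $Y_{(w,m)}$ lies in $\FF_0(T_m)$ (that was the content of each ``Case 2'' step). By Lemma~\ref{tilt theorem} you may therefore invert each $\T_m^+$ by $\T_m^-$ on the nose, obtaining directly
\[
X\cong {\bf C}^{-v}\,\T_1^-\ldots\T_{j-1}^-(S_j),
\]
which is cluster-indecomposable by Corollary~\ref{nearly}. The detour through the auxiliary module $\bar X$, the computation of its derived chain, and the comparison via derived equivalences are all unnecessary: they would in any case require precisely the same intermediate $\FF_0$-information that you already hold. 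This is the paper's argument verbatim; the only cosmetic difference is that the paper first locates the terminal $(v,j)$ via nilpotency and \emph{then} reads off the $\FF_0$-property at each earlier step from the universal hypothesis (noting that $Y_{(w,m)}\ncong S_m$ since $\T_m^+Y_{(w,m)}\neq 0$), whereas you collect that information during the forward iteration.

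Two small corrections. First, the nilpotency you need is that of $\tau_n$ as a functor on all of $\mod\Lambda$ (this is what Proposition~\ref{cluster proper}~(5) asserts), not merely on $\add M$; you do not yet know $X\in\add M$. Second, the back-propagation does \emph{not} follow ``from the terminal hypothesis on $Y$, together with the indecomposability of $X$''---it follows from applying the hypothesis at every pair $(w,m)$, which is why the lemma is stated with a universal quantifier over $u$ and $i$.
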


\begin{proof}
By Proposition \ref{cluster proper} (5) and Proposition \ref{tau=coxeter2}, 
we have ${\bf C}^{v+1}(X)=0$ for a sufficiently large integer $v\geq0$.
Thus there exist integers $v$ and $j\in Q_0$ such that
${\T}_{j-1}^+\ldots{\T}_{1}^+{\bf C}^{v}(X) \neq 0$ and ${\T}_{j}^+{\T}_{j-1}^+\ldots{\T}_{1}^+{\bf C}^{v}(X) = 0$. 
Hence we obtain ${\T}_{j-1}^+\ldots{\T}_{1}^+{\bf C}^{v}(X) \cong S_j$.

Since we have ${\T}_{j-2}^+\ldots{\T}_{1}^+{\bf C}^{v}(X)\ncong S_{j-1}$, it is in $\FF_0(T_{j-1})$ by the assumption.
Therefore we get ${\T}_{j-1}^+{\T}_{j-2}^+\ldots{\T}_{1}^+{\bf C}^{v}(X) \in\XX_0(T_{j-1})$ by Lemma \ref{tilt theorem}.
Hence we have
$${\T}_{j-1}^-(S_j)\cong{\T}_{j-1}^-({\T}_{j-1}^+{\T}_{j-2}^+\ldots{\T}_{1}^+{\bf C}^{v}(X)) \cong {\T}_{j-2}^+\ldots{\T}_{1}^+{\bf C}^{v}(X).$$
Applying this process repeatedly, we get 
$X \cong {\bf C}^{-v}{\T}_1^-\ldots{\T}_{j-1}^-(S_j)$.
Hence, by Corollary \ref{nearly}, $X$ is a cluster-indecomposable $\Lambda$-module.
\end{proof}

Next we recall an important result of representation-directed algebras \cite{B2}.

\begin{Theorem}[Bongartz]\label{B}
Let $\Lambda$ be a representation-directed algebra with $\gl\Lambda \leq 2$ and $q_\Lambda$ be
the Euler form of $\Lambda$.
The map $X \mapsto {\dd} X$ gives a bijection between the isomorphism classes of indecomposable $\Lambda$-modules
and the positive roots of $q_\Lambda$.
\end{Theorem}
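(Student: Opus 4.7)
The approach is to generalize the classical argument for Gabriel's theorem (Theorem~\ref{Gab thm}) by systematically exploiting the cycle-free structure from Definition~\ref{rep-direct}. Three components need to be established: for any indecomposable $X$, the dimension vector $\dd X$ is a positive root; the map $X\mapsto \dd X$ is injective on isomorphism classes; and it is surjective onto the set of positive roots.

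The first two components follow once one establishes the homological vanishing $\End_\Lambda(X)\cong K$ and $\Ext^k_\Lambda(X,X)=0$ for all $k\geq 1$ whenever $X$ is indecomposable. The isomorphism $\End_\Lambda(X)\cong K$ holds because any non-invertible endomorphism of $X$ constitutes a cycle of length one. The vanishing of the self-$\Ext^k$ follows via the Yoneda interpretation: a nontrivial self-extension or 2-fold self-extension produces some indecomposable summand of the middle modules admitting nonzero non-isomorphisms both to and from $X$, giving a cycle. Substituting into Lemma~\ref{Euler form}(3) yields $q_\Lambda(\dd X)=\dim\End_\Lambda(X)=1$, proving positivity of the root. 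Injectivity follows analogously: if $\dd X=\dd Y$ for indecomposable $X\not\cong Y$, the Euler pairing forces nonzero morphisms in both directions (again using cycle-based vanishing of $\Ext^k_\Lambda(X,Y)$ and $\Ext^k_\Lambda(Y,X)$ for $k\geq 1$), and if neither morphism is an isomorphism one closes a cycle $X\to Y\to X$, contradicting the hypothesis.

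Surjectivity is the most delicate step: given a positive root $x$, one must produce an indecomposable $X$ with $\dd X=x$. I would induct on the total dimension $\sum_i x_i$, with base case $x=\mathbf{e}_j$ realized by the simple module $S_j$. For the inductive step, select a sink $k$ with $x_k>0$ and consider the reflection $s_k(x)$ (Definition~\ref{reflection}), which is again a positive root, this time of the Euler form of the APR-tilted version of $\Lambda$, and typically has strictly smaller total dimension; applying the inductive hypothesis on the reflected side produces an indecomposable $Y$ with $\dd Y=s_k(x)$, whose image under a reflection functor in the spirit of $\T_k^-$ is an indecomposable $\Lambda$-module realizing $x$. The principal obstacle is handling the $\Ext^2$-contributions peculiar to the $\gl=2$ setting, which do not arise in Gabriel's hereditary case and can disrupt the naive reflection-functor bookkeeping. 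Bongartz's original treatment in~\cite{B2} sidesteps this through covering theory, by lifting $\Lambda$ to a simply connected (hence hereditary) cover, applying Gabriel's theorem there, and pushing the classification back down to $\Lambda$ along the covering functor.
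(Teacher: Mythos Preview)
The paper does not prove Theorem~\ref{B}; it is quoted from~\cite{B2} as an external result and used as a black box in Lemma~\ref{t gives} and the proof of Theorem~\ref{directed}. There is therefore no paper-side argument to compare against.

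On the substance of your sketch: the root property and the self-extension vanishing are essentially the standard argument and are fine. However, your injectivity step asserts that $\Ext^k_\Lambda(X,Y)=0$ for $k\geq 1$ whenever $X,Y$ are non-isomorphic indecomposables, and this is false in general for representation-directed algebras; the cycle-free hypothesis kills self-extensions, not arbitrary ones. The argument can be repaired: from $\langle\dd X,\dd Y\rangle+\langle\dd Y,\dd X\rangle=2$ at least one pairing is positive, and a positive value of $\langle\dd X,\dd Y\rangle=\dim\Hom-\dim\Ext^1+\dim\Ext^2$ forces either $\Hom(X,Y)\neq 0$ or $\Ext^2(X,Y)\neq 0$; in either case one extracts a path $X\leadsto Y$ (for $\Ext^2$ via a minimal projective resolution of $X$ or Auslander--Reiten theory), and the symmetric argument gives a path $Y\leadsto X$, closing a cycle.

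Your surjectivity paragraph is not a proof but a pointer to one, and it contains two problems. First, the reflection-induction you outline does not run: the class of representation-directed algebras of global dimension $\leq 2$ is not stable under APR or $2$-APR tilting, so the inductive hypothesis is unavailable on the reflected side (the reflection machinery of the present paper is tailored to $n$-representation-finite algebras, a much more restrictive class). Second, your description of Bongartz's method is inaccurate: a simply connected cover is not hereditary in general, so one cannot simply invoke Theorem~\ref{Gab thm} upstairs. Bongartz's argument in~\cite{B2} instead exploits that representation-directed algebras are representation-finite and combines the Tits-form positivity with a geometric/degeneration analysis of module varieties to produce an indecomposable in each positive root.
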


Next lemma plays an important role to prove Theorem \ref{directed}.

\begin{lemm}\label{t gives} Let $\Lambda$ be a 2-representation-finite algebra and $x\in\mathbb{Z}^l$ be a $\Phi$-positive root of $q_{\Lambda}$. 
Assume that $\La$ is representation-directed. 
Then there exists an indecomposable $\Lambda$-module $X$ such that $\dd X=x$. Moreover, $X$ satisfies one of the following conditions. 
\begin{itemize}
\item[(1)] $X \in \FF_0(T_1)$.
\item[(2)] $X\cong P_1$.
\end{itemize}
\end{lemm}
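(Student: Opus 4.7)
My plan proceeds in four parts, following the structure of the statement.

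First, I would establish the existence of $X$ and make the reduction. Taking $m=0$ in the $\Phi$-positivity of $x$ gives $x \in \mathbb{Z}^l_{\ge 0}$, and $x \ne 0$ since $q_\La(x)=1$; thus $x$ is a positive root of $q_\La$. Because $\La$ is representation-directed with $\gl\La \le 2$, Theorem~\ref{B} produces a (unique up to isomorphism) indecomposable $\La$-module $X$ with $\dd X = x$. If $X \cong P_1$, we are in case (2). Otherwise $X \ncong P_1$ and I aim to show $X \in \FF_0(T_1)$. Writing $T_1 = \tau_2^- P_1 \oplus \La/P_1$ with $\La/P_1 \cong \bigoplus_{i \ne 1} P_i$ projective, $\Ext^j_\La(\La/P_1, X) = 0$ for $j>0$, so it remains to verify $\Ext^j_\La(\tau_2^- P_1, X) = 0$ for $j=1,2$.

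The case $j=2$ proceeds exactly as in Lemma~\ref{CT and T}: since $P_1$ is simple projective and $X$ is indecomposable with $X \ncong P_1$, every nonzero morphism $X \to P_1$ is surjective (its image is a submodule of the simple $P_1$) and therefore splits (as $P_1$ is projective), exhibiting $P_1$ as a summand of $X$ and contradicting indecomposability. Hence $\Hom_\La(X, P_1) = 0$, so $\overline{\Hom}_\La(X, P_1) = 0$, and Proposition~\ref{cluster proper}(3) yields $\Ext^2_\La(\tau_2^- P_1, X) \cong D\overline{\Hom}_\La(X, P_1) = 0$.

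The case $j=1$ is the heart of the argument. Since $\tau_2^- P_1$ is cluster-indecomposable and $P_1$ is non-injective, Lemma~\ref{tau=c} gives $\dd(\tau_2^- P_1) = \Phi^{-1}({\bf e}_1)$. Combining Lemma~\ref{Euler form}(2--3) with the projectivity of $P_1$ (so $\langle{\bf e}_1,v\rangle = v_1$ for any $v$) yields the Euler identity
\[
\dim\Hom_\La(\tau_2^- P_1, X) - \dim\Ext^1_\La(\tau_2^- P_1, X) = \langle\Phi^{-1}({\bf e}_1), x\rangle = \langle{\bf e}_1, \Phi x\rangle = (\Phi x)_1 \ge 0,
\]
the inequality being $\Phi$-positivity at $m=1$. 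Now suppose $\Ext^1_\La(\tau_2^- P_1, X) \ne 0$ for a contradiction. Then $\dim\Hom_\La(\tau_2^- P_1, X) > 0$, giving a nonzero morphism $g\colon \tau_2^- P_1 \to X$. The classical Auslander--Reiten formula $\Ext^1(A, B) \cong D\overline{\Hom}(B, \tau A)$ (with $\tau$ the ordinary AR translation) produces a nonzero morphism $f\colon X \to \tau(\tau_2^- P_1)$ not factoring through any injective. Since $\tau_2^- P_1$ lies in $\add(M/\La)$ it is non-projective, and its AR-sequence $0 \to \tau(\tau_2^- P_1) \to E \to \tau_2^- P_1 \to 0$ supplies non-isomorphisms $\tau(\tau_2^- P_1) \to E_k \to \tau_2^- P_1$ through some indecomposable summand $E_k$ of $E$. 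Concatenating $g$, $f$, and these gives a cycle $\tau_2^- P_1 \to X \to \tau(\tau_2^- P_1) \to E_k \to \tau_2^- P_1$ of non-isomorphisms between indecomposable $\La$-modules, contradicting representation-directedness.

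The main obstacle will be the two coincidence cases in which this cycle collapses. The case $X \cong \tau_2^- P_1$ is disposed of at once: $X$ is then cluster-indecomposable with $X \ne P_1$, hence $X \in \FF_0(T_1)$ by Lemma~\ref{CT and T}. The genuinely delicate case is $X \cong \tau(\tau_2^- P_1)$, where $f$ becomes an isomorphism and the cycle argument fails; I expect to rule this out via a dimension-vector calculation using the AR-relation $\dd\tau(\tau_2^- P_1) + \Phi^{-1}({\bf e}_1) = \dd E$ together with $\Phi$-positivity of $\dd X$, showing that this coincidence forces $X$ into $\FF_0(T_1)$ directly or contradicts non-negativity of some $\Phi^m x$. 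This coincidence-handling step is where the full technical difficulty of the lemma resides.
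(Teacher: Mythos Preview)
Your argument coincides with the paper's up to contraposition. The paper assumes $X\notin\FF_0(T_1)$, hence $\Ext^1_\Lambda(\tau_2^-P_1,X)\neq 0$ (having shown $\Ext^2=0$ exactly as you do), and uses the resulting path $X\to\tau(\tau_2^-P_1)\to E_k\to\tau_2^-P_1$ together with representation-directedness to conclude $\Hom_\Lambda(\tau_2^-P_1,X)=0$; then $(t_1 x)_1=-\dim\Ext^1<0$ contradicts $\Phi$-positivity via Lemma~\ref{c-positive}. You instead invoke $(\Phi x)_1\geq 0$ first (and indeed $(\Phi x)_1=(t_1x)_1$, since $t_j$ alters only the $j$-th coordinate) to force $\Hom\neq 0$, and then try to close a cycle directly. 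These are the same argument run in opposite directions.

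The one gap is your handling of the case $X\cong\tau(\tau_2^-P_1)$, which you label ``genuinely delicate'' and propose to resolve by an unspecified dimension-vector calculation involving $\dd E$. This is unnecessary: simply drop $f$ from the cycle. You still have the nonzero map $g\colon\tau_2^-P_1\to X$. If $g$ is an isomorphism then $\tau_2^-P_1\cong\tau(\tau_2^-P_1)$ and the AR sequence itself furnishes the cycle $\tau_2^-P_1\to E_k\to\tau_2^-P_1$; if $g$ is not an isomorphism then $\tau_2^-P_1\xrightarrow{g}\tau(\tau_2^-P_1)\to E_k\to\tau_2^-P_1$ is already a cycle. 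Either way you get the contradiction, and no separate numerical argument is needed. The paper's contrapositive phrasing has the mild advantage of avoiding this case split altogether: it only requires a path from $X$ to $\tau_2^-P_1$, which the AR sequence supplies even when $X\cong\tau(\tau_2^-P_1)$, and directedness then kills $\Hom$ without further subdivision.
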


\begin{proof}
By Theorem \ref{B}, there exists an indecomposable $\Lambda$-module $X$ such that $\dd X=x$.
We assume that $X \ncong P_1$ and show that $X \in \FF_0(T_1)$.

Then we have 
\begin{eqnarray*}
(t_1({\dd X}))_1&=&-\dim Xe_1+ \langle {\dd X},{\bf e}_1\rangle+ \langle {\bf e}_1,{\dd X}\rangle\\
&=&  \langle {\dd X},{\bf e}_1\rangle\\
&=&  (-1)^2\langle \Phi^{-1}({\bf e}_1),\dd X \rangle\ \ \ \ \ \ \ \ \ \ \ \ \ \ \ \    (\text{Lemma}\ \ref{Euler form}\ (2))\\
&=& \langle \dd(\tau_n^-P_1),{\dd X}\rangle\ \ \ \ \ \ \ \ \ \ \ \ \ \ \ \ \ \ \ \ \ \ \ \   (\text{Lemma}\ \ref{tau=c})\\
&=& \sum_{j \geq 0}(-1)^j\dim\Ext^j_\Lambda(\tau_{n}^-P_1,X)\ \ \ \ \ \ \ (\text{Lemma}\ \ref{Euler form}\ (3)).
\end{eqnarray*}

Therefore we obtain 
$$(t_1\bx)_1=\dim\Hom_\Lambda(\tau_2^-P_1,X)-\dim\Ext_\Lambda^1(\tau_2^-P_1,X)+\dim\Ext_\Lambda^2(\tau_2^-P_1,X).$$
By Proposition \ref{cluster proper} (3), we obtain
$\Ext_\Lambda^2(\tau_2^-P_1,X) \cong
D\overline{\Hom}_\Lambda(X,P_1).$ 
Therefore we have $\Ext_\Lambda^2(\tau_2^-P_1,X)=0$ since $P_1$ is a simple projective $\Lambda$-module and $X\ncong P_1$.

Now we assume that $X \notin \FF_0(T_{1})$. 

By $\Ext_\Lambda^2(\tau_2^-P_1,X)=0$, we have $\Ext_\Lambda^1(\tau_2^-P_1,X)\neq 0.$ 
On the other hand, we have $\Ext_\Lambda^1(\tau_2^-P_1,X)\cong D\overline{\Hom}_\Lambda(X,\tau(\tau_2^-P_1))$ by  Auslander-Reiten duality. 
Thus there exists a path from $X$ to $\tau(\tau_2^-P_1)$.
Moreover, $\tau_2^-P_1$ is an indecomposable module and it is not projective by Proposition \ref{cluster proper}. 
Therefore we have an almost split sequence
$$0\to\tau(\tau_2^-P_1)\to\ast\to\tau_2^-P_1\to0.$$
It implies that there exists a path from $\tau(\tau_2^-P_1)$ to $\tau_2^-P_1$.
Then since $\Lambda$ is representation-directed, there is no path from $\tau_2^-P_1$ to $X$ and we get $\Hom_\Lambda(\tau_2^-P_1,X)=0$.

Consequently we have $(t_1x)_1=\dim\Ext_\Lambda^1(\tau_2^-P_1,X)<0$, which contradicts the fact that $x$ is $\Phi$-positive from  Lemma \ref{c-positive}.
\end{proof}

Then we give a proof of Theorem \ref{directed}.
 
\begin{proof}
Let $x\in\mathbb{Z}^l$ be a $\Phi$-positive root of $q_{\Lambda}$.
By Theorem \ref{B}, there exists an indecomposable $\Lambda$-module $X$ such that $\dd X=x$.
We will show that $X$ is a cluster-indecomposable module. 
It is enough to show that $X$ satisfies the condition of Lemma \ref{tilt cluster}. 

If $X\cong P_1$, then there is nothing to show.  
If $X\ncong P_1$, then we have $X \in \FF_0(T_1)$ by Lemma \ref{t gives}. 
Then by Lemma \ref{tilt theorem} and Proposition \ref{kakan1}, $\T_1^+X$ is an indecomposable $\Lambda^2$-module and we have $\dd({\T}_1^+X)=t_1\bx$.

Moreover, by Proposition \ref{number root}, $t_1\bx$ is a $\Phi_{\Lambda^{2}}$-positive root of $q_{\Lambda^{2}}$ and we can apply Lemma \ref{t gives} repeatedly. 
Thus $X$ satisfies the condition of Lemma \ref{tilt cluster} and hence $X$ is a cluster-indecomposable $\Lambda$-module.
\end{proof}

Then Corollary \ref{iterated} follows immediately from this result.

\begin{prop}
Let $\Lambda$ be a $2$-representation-finite algebra. 
If $\Lambda$ is iterated tilted, then $\Lambda^i$ is representation-directed for any $i\in Q_0$.
\end{prop}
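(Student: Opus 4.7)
The plan is to reduce the statement to the classical fact that every iterated tilted algebra of Dynkin type is representation-directed, by first showing that the property ``iterated tilted of Dynkin type'' is preserved under $2$-APR tilting.

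First I would observe that each $2$-APR tilting module $T_i$ is a tilting $\La^i$-module (of projective dimension $2$), so by Lemma \ref{tilt theorem} it induces a triangle equivalence $\DD^b(\mod\La^i)\simeq\DD^b(\mod\La^{i+1})$. Iterating this $i-1$ times, every $\La^i$ is derived equivalent to $\La$. Since $\La$ is assumed to be iterated tilted, it is derived equivalent to some hereditary algebra $H$, and therefore so is each $\La^i$.

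Next I would pin down the type of $H$. Because $\La$ is $2$-representation-finite it is in particular representation-finite, so $\DD^b(\mod\La)\simeq\DD^b(\mod H)$ has only finitely many indecomposable objects up to shift; for a hereditary algebra $H$ this forces $H$ to be the path algebra of a Dynkin quiver. By Happel's characterization, an algebra is iterated tilted of Dynkin type if and only if it is derived equivalent to a Dynkin hereditary algebra, hence each $\La^i$ is iterated tilted of Dynkin type. The conclusion then follows from the classical theorem (Happel--Ringel) that any iterated tilted algebra of Dynkin type is representation-directed: the Auslander--Reiten quiver of $\DD^b(KQ)$ for a Dynkin quiver $Q$ is the acyclic translation quiver $\Z Q$, and the indecomposable modules of any iterated tilted algebra embed as a full subquiver of $\Z Q$, which rules out cycles in $\mod\La^i$.

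The main subtle point is reconciling terminology: classical iterated tilting uses tilting modules of projective dimension one, while our $2$-APR tilting module has projective dimension two, so a priori $\La^i$ need not be iterated tilted in the strict classical sense. This is precisely what Happel's theorem resolves, by characterizing the class of iterated tilted algebras of Dynkin type via derived equivalence; thus derived equivalences, even those induced by tilting modules of higher projective dimension, suffice to stay inside this class. Once this bridge is in place the proposition follows immediately from the results cited above.
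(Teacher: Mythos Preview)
Your overall strategy—showing each $\Lambda^i$ is derived equivalent to a Dynkin path algebra and then invoking that piecewise hereditary algebras of Dynkin type are representation-directed—is the same as the paper's. However, there is a genuine gap in your argument for why the hereditary algebra $H$ must be of Dynkin type.

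You write ``Because $\Lambda$ is $2$-representation-finite it is in particular representation-finite.'' This is false: $n$-representation-finiteness only asserts the existence of an $n$-cluster tilting module, not that $\mod\Lambda$ has finitely many indecomposables. Indeed, Example~\ref{exam3}~(2) in this very paper exhibits a $2$-representation-finite algebra (the Auslander algebra of $A_6$) with infinitely many indecomposable modules. Consequently your argument that $\DD^b(\mod H)$ has only finitely many indecomposables up to shift does not go through, and you have not established that $H$ is Dynkin.

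The paper closes this gap by invoking \cite[Theorem 3.12]{IO2} (see also \cite{Rin2}), which shows directly that a $2$-representation-finite iterated tilted algebra is derived equivalent to a Dynkin path algebra. An alternative repair would be to use Proposition~\ref{finite order}: the Coxeter transformation $\Phi_\Lambda$ has finite order, and since the Coxeter matrix is a derived invariant (up to sign), the Coxeter matrix of $H$ is also periodic, which for a connected hereditary algebra forces Dynkin type. Either way, once $H$ is known to be Dynkin, your concluding step is correct and matches the paper's.
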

 
\begin{proof}
By \cite[Theorem 3.12]{IO2} (see also \cite{Rin2}), $\Lambda$ is derived equivalent to a path algebra of a Dynkin quiver. 
Therefore $\La$ and hence $\Lambda^i$ are representation-directed for any $i\in Q_0$.
\end{proof}

\begin{exam}\label{exam3}
\begin{itemize}
\item[(1)]
Let $\Lambda=KQ/(R)$ be the $2$-representation-finite algebra given by the following quiver and relations.
$$\xymatrix@C20pt@R20pt{ &\circ \ar@{--}[d]  & \circ \ar@{--}[d] &\circ\ar@{--}[d]&\circ\ar@{--}[d] &\ar@{--}[d] \circ  &  \\
Q=&\circ \ar[r]^{b_{5}}\ar[drr]_{c_1} & \circ \ar[r]^{b_{4}}\ar[ul]|{a_{5}}   &\ar[ul]|{a_{4}}\circ\ar[r]^{b_3}&\circ\ar[r]^{b_{2}}\ar[ul]|{a_3} &\ar[r]^{b_1}\ar[ul]|{a_2}   \circ  & \circ 
\ar[ul]|{a_1}\\  &&  &\circ\ar[urrr]_{c_2} & &  }$$
$$ R = \{c_1c_2-b_{5}b_4b_3b_2b_1, b_i a_i\}.$$
This algebra is iterated tilted by \cite[Theorem 3.12]{IO2}. 

Then there are 57 indecomposable $\La$-modules, and their dimension vectors give 
the following complete list of positive roots of $q_\La$.

$\left( \begin{smallmatrix}
00000\\
100000\\
0
\end{smallmatrix} \right),\left( \begin{smallmatrix}
00000\\
010000\\
0
\end{smallmatrix} \right),\left( \begin{smallmatrix}
00000\\
110000\\
0
\end{smallmatrix} \right),\left( \begin{smallmatrix}
00000\\
001000\\
0
\end{smallmatrix} \right),\left( \begin{smallmatrix}
00000\\
011000\\
0
\end{smallmatrix} \right),\left( \begin{smallmatrix}
00000\\
111000\\
0
\end{smallmatrix} \right),\left( \begin{smallmatrix}
00000\\
000100\\
0
\end{smallmatrix} \right),\left( \begin{smallmatrix}
00000\\
001100\\
0
\end{smallmatrix} \right),$
 
$\left( \begin{smallmatrix}
00000\\
011100\\
0
\end{smallmatrix} \right),\left( \begin{smallmatrix}
00000\\
111100\\
0
\end{smallmatrix} \right),\left( \begin{smallmatrix}
00000\\
000010\\
0
\end{smallmatrix} \right),\left( \begin{smallmatrix}
00000\\
000110\\
0
\end{smallmatrix} \right),\left( \begin{smallmatrix}
00000\\
001110\\
0
\end{smallmatrix} \right),\left( \begin{smallmatrix}
00000\\
011110\\
0
\end{smallmatrix} \right),\left( \begin{smallmatrix}
00000\\
111110\\
0
\end{smallmatrix} \right),\left( \begin{smallmatrix}
00000\\
000000\\
1
\end{smallmatrix} \right),$

$\left( \begin{smallmatrix}
00000\\
100000\\
1
\end{smallmatrix} \right),\left( \begin{smallmatrix}
00000\\
110000\\
1
\end{smallmatrix} \right),\left( \begin{smallmatrix}
00000\\
111000\\
1
\end{smallmatrix} \right),\left( \begin{smallmatrix}
00000\\
111100\\
1
\end{smallmatrix} \right),\left( \begin{smallmatrix}
00000\\
111110\\
1
\end{smallmatrix} \right),\left( \begin{smallmatrix}
00000\\
000001\\
0
\end{smallmatrix} \right),\left( \begin{smallmatrix}
00000\\
000011\\
0
\end{smallmatrix} \right),\left( \begin{smallmatrix}
00000\\
000111\\
0
\end{smallmatrix} \right),$

$\left( \begin{smallmatrix}
00000\\
001111\\
0
\end{smallmatrix} \right),\left( \begin{smallmatrix}
00000\\
011111\\
0
\end{smallmatrix} \right),\left( \begin{smallmatrix}
00000\\
000001\\
1
\end{smallmatrix} \right),\left( \begin{smallmatrix}
00000\\
000011\\
1
\end{smallmatrix} \right),\left( \begin{smallmatrix}
00000\\
000111\\
1
\end{smallmatrix} \right),\left( \begin{smallmatrix}
00000\\
001111\\
1
\end{smallmatrix} \right),\left( \begin{smallmatrix}
00000\\
011111\\
1
\end{smallmatrix} \right),\left( \begin{smallmatrix}
00000\\
111111\\
1
\end{smallmatrix} \right),$
 
$\left( \begin{smallmatrix}
10000\\
000000\\
0
\end{smallmatrix} \right),\left( \begin{smallmatrix}
10000\\
010000\\
0
\end{smallmatrix} \right),\left( \begin{smallmatrix}
10000\\
011000\\
0
\end{smallmatrix} \right),\left( \begin{smallmatrix}
10000\\
011100\\
0
\end{smallmatrix} \right),\left( \begin{smallmatrix}
10000\\
011110\\
0
\end{smallmatrix} \right),\left( \begin{smallmatrix}
10000\\
011111\\
0
\end{smallmatrix} \right),\left( \begin{smallmatrix}
10000\\
011111\\
1
\end{smallmatrix} \right),\left( \begin{smallmatrix}
01000\\
000000\\
0
\end{smallmatrix} \right),$

$\left( \begin{smallmatrix}
01000\\
001000\\
0
\end{smallmatrix} \right),\left( \begin{smallmatrix}
01000\\
001100\\
0
\end{smallmatrix} \right),\left( \begin{smallmatrix}
01000\\
001110\\
0
\end{smallmatrix} \right),\left( \begin{smallmatrix}
01000\\
001111\\
0
\end{smallmatrix} \right),\left( \begin{smallmatrix}
01000\\
001111\\
1
\end{smallmatrix} \right),\left( \begin{smallmatrix}
00100\\
000000\\
0
\end{smallmatrix} \right),\left( \begin{smallmatrix}
00100\\
000100\\
0
\end{smallmatrix} \right),\left( \begin{smallmatrix}
00100\\
000110\\
0
\end{smallmatrix} \right),$

$\left( \begin{smallmatrix}
00100\\
000111\\
0
\end{smallmatrix} \right),\left( \begin{smallmatrix}
00100\\
000111\\
1
\end{smallmatrix} \right),\left( \begin{smallmatrix}
00010\\
000000\\
0
\end{smallmatrix} \right),\left( \begin{smallmatrix}
00010\\
000010\\
0
\end{smallmatrix} \right),\left( \begin{smallmatrix}
00010\\
000011\\
0
\end{smallmatrix} \right),\left( \begin{smallmatrix}
00010\\
000011\\
1
\end{smallmatrix} \right),\left( \begin{smallmatrix}
00001\\
000000\\
0
\end{smallmatrix} \right),\left( \begin{smallmatrix}
00001\\
000001\\
0
\end{smallmatrix} \right),$

$\left( \begin{smallmatrix}
00001\\
000001\\
1
\end{smallmatrix} \right).$

Moreover there are 22 cluster-indecomposable $\La$-modules, and their dimension vectors give the following complete list of $\Phi$-positive roots. 

$\left( \begin{smallmatrix}
00001\\
000000\\
0
\end{smallmatrix} \right),\left( \begin{smallmatrix}
00010\\
000000\\
0
\end{smallmatrix} \right),\left( \begin{smallmatrix}
00100\\
000000\\
0
\end{smallmatrix} \right),\left( \begin{smallmatrix}
01000\\
000000\\
0
\end{smallmatrix} \right),\left( \begin{smallmatrix}
10000\\
000000\\
0
\end{smallmatrix} \right),\left( \begin{smallmatrix}
00001\\
000001\\
0
\end{smallmatrix} \right),\left( \begin{smallmatrix}
00001\\
000001\\
1
\end{smallmatrix} \right),\left( \begin{smallmatrix}
00010\\
000011\\
0
\end{smallmatrix} \right),$

$\left( \begin{smallmatrix}
00100\\
000111\\
0
\end{smallmatrix} \right),\left( \begin{smallmatrix}
01000\\
001111\\
0
\end{smallmatrix} \right),\left( \begin{smallmatrix}
10000\\
011111\\
0
\end{smallmatrix} \right),\left( \begin{smallmatrix}
00000\\
111111\\
1
\end{smallmatrix} \right),\left( \begin{smallmatrix}
00010\\
000010\\
0
\end{smallmatrix} \right),\left( \begin{smallmatrix}
00100\\
000100\\
0
\end{smallmatrix} \right),\left( \begin{smallmatrix}
01000\\
001000\\
0
\end{smallmatrix} \right),\left( \begin{smallmatrix}
10000\\
010000\\
0
\end{smallmatrix} \right),$

$\left( \begin{smallmatrix}
00000\\
100000\\
1
\end{smallmatrix} \right),\left( \begin{smallmatrix}
00000\\
111110\\
0
\end{smallmatrix} \right),\left( \begin{smallmatrix}
00000\\
111100\\
0
\end{smallmatrix} \right),\left( \begin{smallmatrix}
00000\\
111000\\
0
\end{smallmatrix} \right),\left( \begin{smallmatrix}
00000\\
110000\\
0
\end{smallmatrix} \right),\left( \begin{smallmatrix}
00000\\
100000\\
0
\end{smallmatrix} \right).$

\item[(2)]Let $\Lambda$ be the $2$-representation-finite algebra given as an Auslander algebra of $A_6$.
\[
\xymatrix@C8pt@R8pt{&& &&  & \circ\ar[rd]& &&&&\\
&& &&  \circ\ar@{--}[rr]\ar[ru]\ar[rd]& &\circ \ar[rd]&&&&\\
&& &\circ\ar@{--}[rr]\ar[ru]\ar[rd]&  &\circ\ar@{--}[rr]\ar[ru]\ar[rd] & &\circ\ar[rd]&&&\\
 &&\circ\ar@{--}[rr]\ar[ru]\ar[rd]& &\circ\ar@{--}[rr]\ar[ru]\ar[rd] &&\circ\ar@{--}[rr]\ar[ru]\ar[rd]&&\circ\ar[rd]&&\\
&\circ \ar@{--}[rr]\ar[ru]\ar[rd]& &\circ\ar@{--}[rr]\ar[ru]\ar[rd]&  &\circ \ar@{--}[rr]\ar[ru]\ar[rd]& &\circ\ar@{--}[rr]\ar[ru]\ar[rd]&&\circ\ar[rd]&\\
\circ \ar@{--}[rr]\ar[ru]& &\circ\ar@{--}[rr]\ar[ru]&  &\ar@{--}[rr]\ar[ru]\circ & &\circ\ar@{--}[rr]\ar[ru]&&\circ\ar@{--}[rr]\ar[ru]&&\circ.}
\]
Then there exist infinitely many indecomposable $\La$-modules and infinitely many roots of $q_\Lambda$.
Even in this case, using Theorem \ref{form}, one can check that there are 56 cluster-roots and 56 $\Phi$-positive roots (for example, by Mathematica). 
Thus cluster-roots correspond bijectively to $\Phi$-positive roots and the conjecture is ture.
\end{itemize}
\end{exam}

\section{Quivers of $n$-APR tilts and reflection functors}\label{quivers}

One of the important properties of APR tilting modules over a path algebra is that we can describe the quiver of its endomorphism algebra. 
More precisely, the new quiver is obtained by reversing the arrows adjacent to the sink 
of the original quiver \cite{BGP,APR}. 
In this section, we provide a generalization of this result to $n$-APR tilting modules and 
give an explicit description of $n$-APR tilts by quivers and relations. 
Moreover, using this, we introduce reflection functors in terms of linear representations of quivers. 

Throughout this section, let $Q$ be a finite, connected and acyclic quiver. Let 
$\Lambda =KQ/(R)$ be a finite dimensional algebra with a minimal set $R$ of relations. 
Assume that $k \in Q_0$ is a sink admitting the $n$-APR tilting module $T_k$.
Take a minimal injective resolution of $P_k$
\[\xymatrix{ 0 \ar[r] &  P_k \ar[r] & I^{0} \ar[r] & \ldots  \ar[r] & I^{n-1}  \ar[r] & I^n \ar[r] &0.   }\]
Then, by \cite[Theorem 3.2]{IO1}, we have a minimal projective resolution of $\tau_n^{-}P_k$ 
\[\tag{i}\label{exact seq}\xymatrix{ 0 \ar[r] & P^{0} \ar[r] & \ldots \ar[r]&P^{n-1} \ar[r]^{ \alpha} & P^{n} \ar[r]^{\beta} & \tau_n^{-}P_k \ar[r] &0,   }\]
where $P^i=\nu^{-1}(I^i)$ for $0\leq i\leq n$. Note that we have $P^{i}\in \add (\Lambda/P_k)$ for $1\leq i\leq n$ since $P_k$ is a simple projective $\Lambda$-module.

We decompose $P^{n-1}$ and $P^{n}$ into direct sums of indecomposable modules and denote them, respectively, by $\bigoplus_{b\in B}P_{i_b}$ and $\bigoplus_{c\in C}P_{j_c}$. 
We also denote the morphism $\alpha$ by $(a_{cb})_{cb}: \bigoplus_{b\in B}P_{i_b}\to \bigoplus_{c\in C}P_{j_c}$. 
We define new arrows $a_c^*:k\to {j_c}$ for each $c\in C$.
Then we give the following definition.
 
\begin{defi}\label{2-apr-tilt}
Under the above set-up, we define a new quiver with relations $(Q',R')=\sigma_k(Q,R)$ as follows.
\begin{itemize}
\item[(1)]$Q'_0 = Q_0$
\item[(2)]$Q_1'= \{a \in Q_1\ |\ e(a) \neq k\} \coprod \{\ a_c^*:k\to {j_c}\ |\ c\in C\}.$
\item[(3)]
$R'=\begin{cases}
0 &\text{if}\ n=1\\
\{r \in R\ |\ e(r) \neq k\}\coprod \{\ \sum_{c\in C}a_c^*a_{cb}\ |\ b\in B \}&\text{if}\ n>1.
\end{cases}$

\end{itemize}
Dually, for a source $k$ admitting the $n$-APR co-tilting module, 
we define $\sigma_k^-$.
\end{defi}

Note that if $n=1$, $Q'$ is obtained by just reversing the arrows. Moreover, 
if $n=2$, $(Q',R')$ is determined by a simple method (cf. \cite[section 3.3]{IO1}).
 
\begin{exam}\label{exam ref}
\begin{itemize}
\item[(1)]
Let $\Lambda=KQ/(R)$ be the algebra given by the following quiver with relations
$$\xymatrix@C20pt@R5pt{& &3 \ar[dl]_{a_1} &   \\
1&2 \ar[l]_{a_3}&   &    \\
& &4, \ar[ul]^{a_2}& R=\{a_1a_3,a_2a_3\}. }$$ 
Then $P_1$ admits a 2-APR tilting module and 
we have the following projective resolution 
\[\xymatrix{ 0 \ar[r] & P_{1}  \ar[r]&P_{2}\ar[r]& P_{3}\oplus P_{4}\ar[r]^{ } & \tau_2^{-}P_1 \ar[r] &0.   }\]
Then $(Q',R')=\sigma_1(Q,R)$ is given as follows
$$\xymatrix@C20pt@R5pt{& &3 \ar[dl]^{a_1} &   \\
1\ar@/^2mm/[urr]^{b_1} \ar@/_2mm/[drr]_{b_2}&2 &   &  &\\
& &4, \ar[ul]_{a_2}&  R'=\{b_1a_1,b_2a_2\}.}$$ 

\item[(2)]
Let $\Lambda=KQ/(R)$ be the algebra given by the following quiver with relations
$$\xymatrix@C20pt@R5pt{& &3 \ar[dl]_{b_1} &  & \\
1&2\ar[l]^a &   &5\ar[ul]_{c_1}\ar[dl]^{c_2}&&\\
& &4, \ar[ul]^{b_2}& &R=\{b_1a,b_2a,c_1b_1-c_2b_2\}.}$$

Then $P_1$ admits a 3-APR tilting module and we have the following projective resolution 
\[\xymatrix{ 0 \ar[r] & P_{1}  \ar[r]&P_{2} \ar[r]^{ } & P_{3}\oplus P_{4} \ar[r]^{} & P_{5}  \ar[r] & \tau_3^{-}P_1 \ar[r] &0.   }\]
Then $(Q',R')=\sigma_1(Q,R)$ is given as follows 
$$\xymatrix@C20pt@R5pt{& &3 \ar[dl]_{b_1} &  & \\
1\ar@/^3mm/[rrr]|(0.6)d&2 &   &5\ar[ul]_{c_1}\ar[dl]^{c_2}&&\\
& &4, \ar[ul]^{b_2}& & R'=\{dc_1,dc_2,c_1b_1-c_2b_2\}.}$$
\end{itemize}
\end{exam}
 
Then we give the following result.

\begin{prop}\label{mutation}
Let $\Lambda =KQ/(R)$ be a finite dimensional algebra with a minimal set $R$ of relations. Assume that $k \in Q_0$ is a sink admitting the $n$-APR tilting module $T_k$ and $(Q',R'):=\sigma_k(Q,R)$.
Then there is an isomorphism $\End_\Lambda(T_k) \cong KQ'/(R')$ and $R'$ is a minimal set of relations.
\end{prop}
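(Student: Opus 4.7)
The plan is to define an algebra homomorphism $\phi:KQ'\to\End_\Lambda(T_k)$, verify that $\phi$ kills $R'$ (so descends to $\bar\phi$), show $\bar\phi$ is bijective, and confirm minimality of $R'$. On generators, I would set: vertex $e_i\mapsto\epsilon_i$ (the projection of $T_k$ onto its $i$-th summand, which is $\tau_n^{-}P_k$ for $i=k$ and $P_i$ otherwise); retained arrow $a\in Q_1$ with $e(a)\neq k$ to the morphism $P_{e(a)}\to P_{s(a)}$ already defined by $a$ in $\Lambda$; new arrow $a_c^*:k\to j_c$ to the $c$-th component $\beta_c:P_{j_c}\to\tau_n^{-}P_k$ of the projective cover $\beta$ in (i). Then $\phi(R')=0$ is immediate: a relation $r\in R$ with $e(r)\neq k$ already vanishes in $\Lambda$, and each new relation $\sum_c a_c^*a_{cb}$ is sent to the $b$-th component of $\beta\circ\alpha$, which vanishes by exactness of (i).

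For surjectivity of $\bar\phi$, the blocks $\Hom(P_i,P_j)$ with $i,j\neq k$ are generated by the $\phi$-images of paths in $Q$ avoiding $k$; $\End(\tau_n^{-}P_k)=K$ is hit by $\phi(e_k)$; and for $j\neq k$, exactness of $\Hom_\Lambda(P_j,-)$ applied to the surjection $\beta:P^n\twoheadrightarrow\tau_n^{-}P_k$ yields a surjection $\bigoplus_c\Hom(P_j,P_{j_c})\twoheadrightarrow\Hom(P_j,\tau_n^{-}P_k)$, each summand of whose source is spanned by $\phi$-images of paths. The same exact sequence identifies the kernel of this surjection with the image of $\alpha_*$, so any element of $\ker\phi$ supported in the block from $j$ to $k$, written $\sum_c a_c^*q_c$, comes from $(q_c)=(\sum_b a_{cb}h_b)$ for some $(h_b)$, and therefore lies in the two-sided ideal $(R')$. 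Combined with the $\Lambda$-relations at vertices away from $k$, this gives $\ker\phi=(R')$, so $\bar\phi$ is injective. Minimality of $R'$ follows because minimality of the resolution (i) places $\alpha$ in $\operatorname{rad}\Lambda$, so each new relation lies in $\operatorname{rad}^2(KQ')$ and the family $\{\sum_c a_c^*a_{cb}\}_{b\in B}$ is linearly independent modulo $\operatorname{rad}^3$, by linear independence of the components of $\alpha$ modulo $\operatorname{rad}^2\Lambda$.

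The main obstacle is checking that vertex $k$ does not sprout any unexpected outgoing arrows, that is, $\Hom_\Lambda(\tau_n^{-}P_k,P_j)=0$ for every $j\neq k$. To see this, apply the exact functor $\Hom_\Lambda(P_j,-)$ to the minimal injective resolution $0\to P_k\to I^0\to\cdots\to I^n\to 0$; since $k$ is a sink in $Q$ and $j\neq k$, one has $\Hom(P_j,P_k)=0$, so the truncated sequence $0\to\Hom(P_j,I^0)\to\cdots\to\Hom(P_j,I^n)\to 0$ is itself exact. Dualising and invoking the Nakayama duality $D\Hom(P_j,I^i)\cong\Hom(\nu^{-1}I^i,P_j)=\Hom(P^i,P_j)$ turns this into an exact sequence $0\to\Hom(P^n,P_j)\to\Hom(P^{n-1},P_j)\to\cdots\to\Hom(P^0,P_j)\to 0$. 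Its first map, induced by $\alpha$, is therefore injective; its kernel is exactly $\Hom(\tau_n^{-}P_k,P_j)$, which consequently vanishes.
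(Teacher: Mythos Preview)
Your approach differs substantially from the paper's. The paper defines the same homomorphism $\phi:KQ'\to\Gamma$ but then invokes the criterion of \cite[Proposition~3.1,~3.3]{BIRS} (Theorem~\ref{birs} here), which reduces the statement to checking that certain sequences are projective presentations of the simple $\Gamma$-modules; these presentations are supplied by \cite[Theorem~3.16]{IO1}. Minimality of $R'$ then follows because the tilting equivalence $\add T_k\simeq\proj\Gamma$ carries the minimal sequences (\ref{exact seq}) and (\ref{presen 0}) to minimal presentations. Your block-by-block analysis is more elementary in that it avoids both external results, and your verification of $\Hom_\Lambda(\tau_n^{-}P_k,P_j)=0$ via Nakayama duality is a nice self-contained argument for the absence of incoming arrows at $k$.

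There are, however, two genuine gaps. First, you assert $\End_\Lambda(\tau_n^{-}P_k)=K$ without proof. This is needed for surjectivity, since $e_kKQ'e_k=K\cdot e_k$. The claim is true, but requires work: one route is to use your vanishing $\Hom(\tau_n^{-}P_k,P^i)=0$ together with $\Ext^{>0}(T_k,T_k)=0$ and dimension-shift along (\ref{exact seq}) to obtain $\End(\tau_n^{-}P_k)\cong\Ext^n(\tau_n^{-}P_k,P_k)$, and then compute the latter to be $K$ from the resolution.

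Second, your minimality argument does not stand. You claim the new relations are linearly independent modulo $\operatorname{rad}^3(KQ')$ because the columns of $\alpha$ are linearly independent modulo $\operatorname{rad}^2\Lambda$. But minimality of (\ref{exact seq}) only gives $\alpha\in\operatorname{rad}$; its entries $a_{cb}$ may well lie in $\operatorname{rad}^2\Lambda$ (for instance whenever $n\geq 3$ and the relevant paths have length $\geq 2$), in which case every $\sum_c a_c^*a_{cb}$ already sits in $\operatorname{rad}^3(KQ')$ and your criterion is vacuous. Minimality of relations is governed not by $\operatorname{rad}^3$ but by the images of the relations in $I/(JI+IJ)$, or equivalently by minimality of the projective presentations of the simples $S_i^\Gamma$. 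You also do not address why the retained relations from $R$ remain minimal in the new ideal. The clean fix is precisely the paper's: transport minimality of (\ref{exact seq}) and of the presentations (\ref{presen 0}) through the equivalence $\Hom_\Lambda(T_k,-):\add T_k\to\proj\Gamma$.
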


We remark that $KQ'/(R')$ is $n$-representation-finite if $\La$ is by Proposition \ref{n-apr}. It allows us to give explicit descriptions of $n$-representation-finite algebras by $n$-APR tilts (Example \ref{exam0}). 

For a proof, we recall the following result. 
We denote by $J_\Lambda$ the Jacobson radical of an algebra $\Lambda$ and call an element of $KQ$ \emph{basic} if it is a linear sum of paths in $Q$ with a common start and a common end. We refer to \cite{BIRS} for more details. 
 
\begin{theorem}\label{birs}\cite[Proposition 3.1, 3.3]{BIRS}
Let $Q$ be a finite, connected and acyclic quiver and $\Gamma$ be a basic finite dimensional algebra.
Let $\phi: {KQ} \to \Gamma$ be an algebra homomorphism and
$R$ be a finite set of basic elements in $J_{{KQ}}$.
Then the following conditions are equivalent.
\begin{itemize}
\item[(1)]$\phi$ is surjective and $\Ker\phi={I}$ for the ideal $I=(R)$ of ${{KQ}}$.
\item[(2)]
The following sequence is exact for any $i\in Q_0$
\[\xymatrix@C40pt{ \displaystyle\bigoplus_{\begin{smallmatrix}r \in R,s(r)=i\end{smallmatrix}}(\phi e(r))\Gamma \ar[r]^{(\phi\partial_ar)_{r,a}}  &
{\displaystyle\bigoplus_{\begin{smallmatrix}a \in Q_1,s(a)=i\end{smallmatrix}}} (\phi e(a))\Gamma \ar[r]^{\ \ \ \ \ \  \ (\phi a)_a} &  (\displaystyle{\phi i})J_\Gamma \ar[r]& 0.  }\]
\end{itemize}
\end{theorem}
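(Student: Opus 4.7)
The statement is a two-way equivalence, and I would prove the two directions separately, both organized around the single identity $r=\sum_{a\in Q_1,\,s(a)=i}a\cdot\partial_{a}r$ holding in $KQ$ for any basic element $r\in J_{KQ}$ with $s(r)=i$. Applying $\phi$ yields $\phi(r)=\sum_{a}\phi(a)\phi(\partial_{a}r)$, so the composition of the two maps in the sequence of (2) sends the generator indexed by $r$ to $\phi(r)\in(\phi i)J_\Gamma$. Thus vanishing of this composition is equivalent to $\phi(r)=0$ for all $r\in R$ with $s(r)=i$, while exactness \emph{at} the middle says moreover that every syzygy among the elements $\{\phi(a):s(a)=i\}$ comes from the $R$-relations. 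This framing makes both implications transparent.

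For $(1)\Rightarrow(2)$: identify $\Gamma\cong KQ/(R)$ via $\phi$. The map $(\phi a)_a$ is surjective because the arrows starting at $i$ generate $e_iJ_\Gamma$ as a right $\Gamma$-module, a standard fact for algebras presented by quivers with relations. For exactness at the middle, suppose $(x_a)_a$ satisfies $\sum_a\phi(a)x_a=0$ in $(\phi i)\Gamma$ and lift each $x_a$ to $\widetilde{x}_a\in e(a)KQ$; then $\sum_a a\widetilde{x}_a\in(R)$, so it can be written as $\sum_j u_j r_j v_j$ with $s(u_j)=i$. Split the sum by whether $u_j=e_i$ (the contribution is $\sum_a a\cdot(\partial_a r_j)v_j$ via the derivative identity) or $u_j$ is a nontrivial path beginning with some arrow $a$ (the contribution lies in $a\cdot e(a)\cdot(R)$). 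Linear independence of distinct arrows at $i$ in $KQ$ then gives $\widetilde{x}_a\equiv\sum_j(\partial_a r_j)v_j\pmod{(R)}$, exhibiting $(x_a)_a$ as the image of $\sum_j r_j\otimes\phi(v_j)$ under $(\phi\partial_a r)_{r,a}$.

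For $(2)\Rightarrow(1)$: the exactness in (2) immediately yields $\phi(r)=\sum_a\phi(a)\phi(\partial_a r)=0$ for every $r\in R$ with $s(r)=i$ (the image of the first map lies in the kernel of the second), so $(R)\subseteq\Ker\phi$ and $\phi$ induces $\bar\phi\colon KQ/(R)\to\Gamma$. For surjectivity of $\phi$, iterate the surjection in (2) to obtain $(\phi i)J_\Gamma^{m}\subseteq\Image\phi$ for all $m$; nilpotency of $J_\Gamma$ together with $\sum_i\phi(e_i)=\phi(1)=1$ then gives $\Gamma=\Image\phi$.

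The main obstacle is the reverse inclusion $\Ker\phi\subseteq(R)$, equivalently, injectivity of $\bar\phi$. My plan is to apply the argument of $(1)\Rightarrow(2)$ to the algebra $\Lambda:=KQ/(R)$ itself with $\phi$ the canonical quotient, which produces an exact sequence $\bigoplus_{r:s(r)=i}e(r)\Lambda\to\bigoplus_{a:s(a)=i}e(a)\Lambda\to e_iJ_\Lambda\to 0$ intrinsic to $\Lambda$. Then $\bar\phi$ induces a commutative ladder of exact sequences with surjective vertical maps, and an induction on the radical degree $m$ shows $\bar\phi$ restricts to an isomorphism $e_i(J_\Lambda^{m}/J_\Lambda^{m+1})\xrightarrow{\sim}(\phi i)(J_\Gamma^{m}/J_\Gamma^{m+1})$: the base $m=0$ uses that $\{\phi(e_i)\}$ is a complete set of pairwise orthogonal primitive idempotents (since $\Gamma$ is basic), while the inductive step follows by a diagram chase on the ladder. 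Since $J_\Gamma^{N}=0$ for some $N$, a Nakayama argument applied to each cyclic right $\Lambda$-module $e_i\Lambda$ then forces $J_\Lambda^{N}=0$, so $\Lambda$ is finite-dimensional with $\dim\Lambda=\dim\Gamma$, making $\bar\phi$ an isomorphism and completing the equivalence.
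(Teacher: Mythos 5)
The first thing to note is that the paper contains no proof of this statement: it is imported verbatim from \cite[Propositions 3.1 and 3.3]{BIRS}, so your argument can only be measured against the source. In \cite{BIRS} the standing hypothesis is that $\phi(e_i)$, $i\in Q_0$, form a complete set of primitive orthogonal idempotents of $\Gamma$ (equivalently, $\phi$ induces an isomorphism on semisimple quotients); the transcription in this paper silently drops it, and in the paper's only application (Proposition \ref{mutation}) it holds automatically because the $\iota_i\pi_i$ are the idempotents of $\End_\Lambda(T_k)$ coming from the decomposition of the basic module $T_k$. Your direction $(1)\Rightarrow(2)$ is correct and is essentially the standard argument: the decomposition $e_iJ_{KQ}=\bigoplus_{a:\,s(a)=i}a\cdot e(a)KQ$, the identity $r=\sum_a a\,\partial_ar$, and the splitting of $\sum_j u_jr_jv_j$ according to whether $u_j$ is the trivial path are exactly the right tools, and the bookkeeping you describe goes through.

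The genuine gap is in $(2)\Rightarrow(1)$, at the step ``$\{\phi(e_i)\}$ is a complete set of pairwise orthogonal primitive idempotents (since $\Gamma$ is basic)''. Basicness of $\Gamma$ gives neither primitivity nor even nonvanishing of the $\phi(e_i)$, and without this input the implication is false as printed: take $Q$ a single vertex with no arrows, $R=\emptyset$, $\Gamma=K\times K$ and $\phi(e_1)=(1,1)$; then $J_\Gamma=0$, condition (2) reduces to exactness of $0\to0\to0\to0$, which holds, yet $\phi$ is not surjective. So no blind proof can succeed here without restoring the hypothesis, and your attempt to derive it from basicness is precisely where the argument breaks. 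The same missing input also vitiates your surjectivity step as written: iterating the surjection in (2) does not yield $(\phi i)J_\Gamma^m\subseteq\Image\phi$, since each iteration leaves a factor of $\Gamma$ (not of $\Image\phi$) on the right; one only gets $(\phi i)J_\Gamma\subseteq\sum_{p}\phi(p)\Gamma$ with $p$ ranging over paths, and one must first cover the semisimple top. With the hypothesis restored, the correct induction is $\Gamma=\Image\phi+J_\Gamma^m$ for all $m$: the base $\Gamma=\Image\phi+J_\Gamma$ uses that the $\phi(e_i)$ span $\Gamma/J_\Gamma$, the step uses $(\phi i)J_\Gamma=\sum_a\phi(a)(\phi e(a))\Gamma$ together with $\phi(a)\in J_\Gamma$, and nilpotency of $J_\Gamma$ finishes. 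Granting the hypothesis, your injectivity plan is sound and can be completed: the ladder comparing the presentation of $e_iJ_\Lambda$ over $\Lambda=KQ/(R)$ with the given one over $\Gamma$, plus exactness of the $\Gamma$-row at the middle term, lets one lift any relation $\sum_a\phi(a)\bar\phi(x_a)\in(\phi i)J_\Gamma^{m+1}$ back through the $\phi(\partial_ar)$ and push the radical-layer induction through. One small simplification: since $Q$ is acyclic, $KQ$ and hence $\Lambda=KQ/(R)$ are automatically finite dimensional with $J_\Lambda$ nilpotent, so your closing Nakayama argument is unnecessary.
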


Then we give a proof of Proposition \ref{mutation}.

\begin{proof}
Take the minimal projective resolution of $\tau_n^-P_k$ (\ref{exact seq}) and we follow the same notations. 
Let $\Gamma:=\End_\Lambda(T_k)$. 
We denote by $\phi$ the natural surjective map $KQ\to\Lambda$, which gives $\Ker\phi=(R)$.
Then we define an algebra homomorphism  $\phi':{KQ'} \to \Gamma$ as follows.   
\begin{itemize}
\item[(a)]For $i\in Q_0'$, we define $\phi'i=\iota_i\pi_i\in\Gamma$, where $\pi_i:T_k\to P_i$ (respectively, $\iota_i:P_i\to T_k$) denotes the canonical projection (injection) for $i\neq k$ and 
$\pi_i:T_k\to \tau_n^-P_k$ (respectively, $\iota_i:\tau_n^-P_k\to T_k$) denotes the canonical projection (injection) for $i=k$.

\item[(b)]For $a\in Q_1'\cap Q_1$, define $\phi' a =\phi a$. For $a^*_c$ with $c\in C$, define 
$$\phi'((a_c^*)_{c\in C})= \beta \in \Hom_\Lambda(\bigoplus_{c\in C}P_{j_c},\tau_n^-P_k),$$
where $\beta$ is given in (\ref{exact seq}).
\end{itemize}

Take a vertex $i\in Q_0$ with $i\neq k$ and 
let $S_i$ be the simple $\Lambda$-module. Then we have a minimal projective presentation of $S_i$ as follows
\[\tag{ii}\label{presen 0}\xymatrix{  {\displaystyle\bigoplus_{\begin{smallmatrix}r \in R,s(r)=i\end{smallmatrix}}} P_{e(r)} \ar[r]^{f_i} &{\displaystyle\bigoplus_{\begin{smallmatrix}a \in Q_1,s(a)=i\end{smallmatrix}}} P_{e(a)} \ar[r]^{\ \ \ \  \ \ \ g_i}&P_{i} \ar[r] &S_i \ar[r]^{} & 0. }\] 
Then, by Proposition \cite[Theorem 3.16]{IO1},
we have a projective presentation of a simple $\Gamma$-module $S_i^\Gamma$ 
\[\tag{iii}\label{presen 1}\xymatrix@C20pt@R40pt{ \Hom_\Lambda(T_k, {\displaystyle\bigoplus_{\begin{smallmatrix}r \in R,s(r)=i\end{smallmatrix}}} P_{e(r)}) \ar[r]^{\cdot f_i\  }  &  \Hom_\Lambda(T_k,{\displaystyle\bigoplus_{\begin{smallmatrix}a \in Q_1,s(a)=i\end{smallmatrix}}} P_{e(a)}) \ar[r]^{\ \ \ \ \ \ \ \cdot g_i} & \Hom_\Lambda(T_k,P_i) \ar[r]^{} &S_i^\Gamma \ar[r]^{} & 0, }\]
and of a simple $\Gamma$-module $S_k^\Gamma$
\[\tag{iv}\label{presen 2}\xymatrix{ \Hom_\Lambda(T_k, \bigoplus_{b\in B}P_{i_b}) \ar[r]^{\cdot\alpha}  &  \Hom_\Lambda(T_k,\bigoplus_{c\in C}P_{j_c}) \ar[r]^{\cdot\beta} & \Hom_\Lambda(T_k,\tau_n^{-}P_k) \ar[r]^{} &S_k^\Gamma \ar[r]^{} & 0. }\]
Note that we have $\Hom_\Lambda(T_k, P_k)=0$ since $P_k$ is a simple projective module.
Then, by Theorem \ref{birs}, the first statement follows.
Moreover, (\ref{presen 1}),(\ref{presen 2}) are minimal projective presentations.
Indeed, we have an equivalence $\add(T_k)\cong \proj(\Gamma)$, where $\proj(\Gamma)$ denote the category of projective modules of $\mod\Gamma$.
Hence, if (\ref{presen 1}),(\ref{presen 2}) are not minimal, 
it implies that the sequence (\ref{exact seq}),(\ref{presen 0}) are not minimal, which is a contradiction.
Then by \cite{BIRS}, the second statement follows.
\end{proof}

Next, following \cite{BGP}, we define reflection functors in terms of linear representations. 
First we recall some basic results. 
Let $\Lambda=KQ/(R)$ be a finite dimensional algebra and denote by $\mathrm{rep}_K(Q,R)$ the category of finite dimensional $K$-linear
representations of $Q$ bound by $R$.
Then we recall the following fundamental theorem.
\begin{theorem}
Let $\Lambda=KQ/(R)$ be a finite dimensional algebra.
There exists a $K$-linear equivalence of categories
$$F:\mod\Lambda\to\mathrm{rep}_K(Q,R).$$
\end{theorem}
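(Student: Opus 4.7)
The plan is to construct the functor $F$ explicitly together with a quasi-inverse $G$, and then verify the two natural isomorphisms $FG\cong\mathrm{id}$ and $GF\cong\mathrm{id}$. For a right $\Lambda$-module $M$, I would define $F(M)$ to be the representation with vector spaces $M_i := Me_i$ for each $i\in Q_0$, and with linear maps $M_a : M_{s(a)}\to M_{e(a)}$ given by right multiplication by the image of the arrow $a\in Q_1$ in $\Lambda$. For a $\Lambda$-linear map $f:M\to N$, the components $f_i := f|_{Me_i}:Me_i\to Ne_i$ are well-defined since $f$ commutes with the right action of $e_i$. Functoriality of $F$ is immediate.

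The key verification on the $F$-side is that $F(M)$ is bound by $R$: for any relation $r=\sum_\ell \lambda_\ell p_\ell\in R$ with paths $p_\ell$ from $i$ to $j$, the composite $\sum_\ell \lambda_\ell M_{p_\ell}:M_i\to M_j$ is given by right multiplication by the image of $r$ in $\Lambda=KQ/(R)$, which is zero.

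For the quasi-inverse, given a representation $V=(V_i,V_a)_{i\in Q_0,a\in Q_1}$ bound by $R$, I would set $G(V):=\bigoplus_{i\in Q_0}V_i$ as a $K$-vector space. The right $KQ$-module structure is defined by letting each idempotent $e_i$ act as the projection onto $V_i$, and each path $p: i\to j$ in $Q$ act as the composite linear map $V_p: V_i\to V_j\hookrightarrow G(V)$, extended $K$-linearly. Because $V$ satisfies the relations, every $r\in R$ acts as zero, so this descends to a well-defined right $\Lambda$-module structure on $G(V)$. Morphisms of representations assemble into $\Lambda$-linear maps in the obvious way, making $G$ a functor.

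Finally I would construct the natural isomorphisms. For $GF(M)$, the underlying space is $\bigoplus_i Me_i$, which coincides with $M$ via the decomposition $1=\sum_i e_i$ of orthogonal idempotents; the action agrees with the original one because on the summand $Me_i$ the arrow $a$ acts by right multiplication by $a$ in both structures. For $FG(V)$, note $G(V)e_i=V_i$ and for $a\in Q_1$ the right action of $a$ on $G(V)$ restricts to $V_a$, so $FG(V)=V$ on the nose. These identifications are natural in $M$ and $V$. The main (and essentially only) obstacle is the bookkeeping around the relations: checking that the composition of arrow maps in $F(M)$ vanishes on $R$, and conversely that the relations on a bound representation force the $KQ$-action on $G(V)$ to factor through $(R)$. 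Once this compatibility is established, everything else is formal.
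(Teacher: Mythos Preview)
Your proposal is correct and is the standard textbook argument for this classical equivalence. Note, however, that the paper does not actually prove this theorem: it is stated as a well-known result, with the description of $F$ recalled immediately afterward and a reference to \cite{ASS} for the details, so there is no paper proof to compare against beyond observing that your construction of $F$ matches the one the paper recalls.
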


Recall that the functor $F$ associates
with any $\Lambda$-module $M$ the representation $FM=((FM)_i,\varphi_a )_{i\in Q_0,a\in Q_1}$, where $(FM)_i=M{e_i}$ for $i\in Q_0$ and the $K$-linear map $\varphi_a:M{e_i}\to M{e_j}$ for 
$a:i\to j$ in $Q_1$ is defined by $x \mapsto xa$ for $x\in Me_i$ (refer to \cite{ASS} for the precise definition). 

\begin{defi}\label{n-ref functor}
(I) Let $\Lambda=KQ/(R)$ be a finite dimensional algebra, 
$k\in Q_0$ be a sink admitting the $n$-APR tilting module $T_k$ and $(Q',R'):=\sigma_k(Q,R)$.  
We define the \emph{reflection functor}
\[ {\Q}_k^+ : \mathrm{rep}_K(Q,R)   \to  \mathrm{rep}_K(Q',R')\] as follows.
 
Take the minimal projective resolution (\ref{exact seq}) and we follow the same notations. 
For an object $X = (X_i,\varphi_a )_{}$ in $\mathrm{rep}_K(Q,R)$,
we define the object ${\Q}_k^+(X) = (X'_i , \varphi_a' )_{}$ in
$\mathrm{rep}_K(Q',R')$ as follows.
\begin{itemize}
\item[(1)]$X_i' = X_i$ for $i \neq k$, whereas $X_k'$ is the kernel of the $K$-linear map
$$(\varphi_{a_{cb}})_{a_{cb}}  :{\displaystyle\bigoplus_{\begin{smallmatrix}c\in C\end{smallmatrix}}} X_{j_c}  \to {\displaystyle\bigoplus_{\begin{smallmatrix}b\in B\end{smallmatrix}}} X_{i_b},$$
where $\varphi_{a_{cb}}$ is defined from $\varphi_{a}$ for $a\in Q_1$ naturally \cite[section III]{ASS}.

\item[(2)] $\varphi_a' = \varphi_a$ for all arrows
$a: i \to j$ in $Q_1$ with $j \neq k$, whereas, 
for $a_{c}^*:k \to {j_c}$ in $Q_1'$, define
$\varphi_{a_{c}^*} :X_k' \to X_{j_c}$ by the composition of the inclusion of $X_k'$ into
$\bigoplus_{c\in C} X_{j_c}$ with the projection onto the direct summand $ X_{j_c}$.
\end{itemize}

Next let $f = (f_i)_{i \in Q_0} :X \to Y$ be a morphism in $\mathrm{rep}_K(Q,R)$,
where $X = (X_i,\varphi_a )$ and
$Y = (Y_i,\psi_a )$.
We define the morphism
\[ {\Q}_k^+(f) = f' =(f_i')_{i \in Q_0'} :{\Q}_k^+(X) \to {\Q}_k^+(Y) \]
in $\mathrm{rep}_K(Q',R')$ as follows.
For all $i \neq k$, we let $f_i' = f_i$, whereas $f_k'$ is the unique $K$-linear map, making the following diagram commutative
\[
\xymatrix@C40pt@R40pt{
0 \ar[r] & ({\Q}_k^+(X))_k \ar[r] \ar@{.>}[d]^{f_k'} &  {\displaystyle\bigoplus_{\begin{smallmatrix}c\in C\end{smallmatrix}}}X_{j_c}
 \ar[r]^{(\varphi_{a_{cb}})_{a_{cb}}} \ar[d]^{\oplus f_{c}}   & {\displaystyle\bigoplus_{\begin{smallmatrix}b\in B\end{smallmatrix}}} X_{i_b}\ar[d]^{\oplus f_b}  \\
0 \ar[r] & ({\Q}_k^+(Y))_k \ar[r]  & {\displaystyle\bigoplus_{\begin{smallmatrix}c\in C\end{smallmatrix}}}Y_{j_c} \ar[r]^{ (\psi_{a_{cb}})_{a_{cb}}} &
{\displaystyle\bigoplus_{\begin{smallmatrix}b\in B\end{smallmatrix}}} Y_{i_b}.  }
\]
 
(II)
Let $\Gamma=KQ'/(R')$ be a finite dimensional algebra,
$k\in Q'_0$ be a source admitting the $n$-APR co-tilting module $T_k$ and $(Q,R):=\sigma_k^-(Q',R')$.
We define the \emph{reflection functor}
\[ {\Q}_k^- : \mathrm{rep}_K(Q',R')   \to  \mathrm{rep}_K(Q,R)\] as follows.

We have a minimal injective resolution of $\tau_nI_k$
\[\xymatrix{ 0\ar[r] &\tau_nI_k \ar[r] & {I^n} \ar[r]^{\gamma\ \ }&I^{n-1}\ar[r]& \ldots   \ar[r]&I^{0} \ar[r] &0.   }\]
We decompose $I^{n}$ and $I^{n-1}$ into direct sums of indecomposable modules, and denote them by
$\bigoplus_{c\in C}I_{j_c}$ and $\bigoplus_{b\in B}I_{i_b}$, respectively.
We denote the morphism $\gamma$ by 
$(\varphi_{a_{bc}})_{a_{bc}}  :\bigoplus_{c\in C}I_{j_c} \to \bigoplus_{b\in B}I_{i_b}.$  
For an object $X' = (X'_i,\varphi_a' )_{}$ in $\mathrm{rep}_K(Q',R')$,
we define the object ${\Q}_k^-(X') = (X_i , \varphi_a )_{}$ in
$\mathrm{rep}_K(Q,R)$ as follows.
\begin{itemize}
\item[(1)]$X_i = X_i'$ for $i \neq k$, whereas $X_k$ is the cokernel of the $K$-linear map
$$(\varphi_{a_{bc}})_{a_{bc}}  :{\displaystyle\bigoplus_{\begin{smallmatrix}b\in B\end{smallmatrix}}} X_{i_b}'  \to{\displaystyle\bigoplus_{\begin{smallmatrix}c\in C\end{smallmatrix}}} X_{j_c}'.$$

\item[(2)] $\varphi_a = \varphi_a'$ for all arrows
$a: i \to j$ in $Q'$ with $i \neq k$, whereas, 
for $a_{c}^*:{j_c} \to k$ in $Q_1$, define $\varphi_{a_{c}^*} :X_{j_c}' \to X_{k}$ by the composition of the inclusion of $X_{j_c}'$ into
$\bigoplus_{c\in C} X_{j_c}'$
with the cokernel projection onto $ X_{k}$.
\end{itemize}

Next let $f' = (f_i')_{i \in Q_0'} :X' \to Y'$ be a morphism in $\mathrm{rep}_K(Q',R')$,
where $X' = (X_i',\varphi_a' )$ and $Y' = (Y_i',\psi_a' )$.
We define the morphism
\[ {\Q}_k^-(f') = f =(f_i)_{i \in Q_0} :{\Q}_k^-(X')\to {\Q}_k^-(Y') \]
in $\mathrm{rep}_K(Q,R)$ as follows.
For all $i \neq k$, we let $f_i = f_i'$, whereas $f_k$ is the unique $K$-linear map, making the following diagram
commutative
\[
\xymatrix@C40pt@R40pt{
{\displaystyle\bigoplus_{\begin{smallmatrix}b\in B\end{smallmatrix}}} X_{i_b}'  \ar[r]^{(\varphi_{a_{bc}})_{a_{bc}}}  \ar[d]^{\oplus f_b} 
&{\displaystyle\bigoplus_{\begin{smallmatrix}c\in C\end{smallmatrix}}}X_{j_c}'\ar[d]^{\oplus f_{c}} \ar[r]  & ({\Q}_k^-(X'))_k \ar@{.>}[d]^{f_k}  \ar[r]&0\\
{\displaystyle\bigoplus_{\begin{smallmatrix}b\in B\end{smallmatrix}}} Y_{i_b}'  \ar[r]^{(\varphi_{a_{bc}})_{a_{bc}}}    &\ar[r]{\displaystyle\bigoplus_{\begin{smallmatrix}c\in C\end{smallmatrix}}}Y_{j_c}'
   & ({\Q}_k^-(Y'))_k  \ar[r]&0.  }
\]
\end{defi}

\begin{exam}
Let $\Lambda=KQ/(R)$ be the algebra given by Example \ref{exam ref} (2) and $X$ be a representation 
of $\mathrm{rep}_K(Q,R)$ given as follows  
$$\xymatrix@C20pt@R10pt{& &X_3 \ar[dl]_{\varphi_{b_1}}    \\
X_1&X_2\ar[l]^{\varphi_{a}} &   &X_5\ar[ul]_{\varphi_{c_1}}\ar[dl]^{\varphi_{c_2}}\\
& &X_4 \ar[ul]^{\varphi_{b_2}}  }$$

Then ${\Q}_k^+(X)$ is given as follows
 
$$\xymatrix@C20pt@R10pt{& &X_3 \ar[dl]_{\varphi_{b_1}}  \\
X_1'\ar@/^4mm/[rrr]|(0.7){\varphi_{d}}&X_2 &   &X_5\ar[ul]_{\varphi_{c_1}}\ar[dl]^{\varphi_{c_2}}\\
& &X_4 \ar[ul]^{\varphi_{b_2}} }$$
where $X_1'$ is given by $\Ker F$ for
$\xymatrix{F:X_5 \ar[rr]^{\left(\begin{smallmatrix}\varphi_{c_1}\\ \varphi_{c_2} \end{smallmatrix}\right)}  &&X_3\oplus X_4}$.
\end{exam}
 
Then we give a relationship between ${\Q}_k^+$ (respectively, ${\Q}_k^-$) and $\T_k^+:=\Hom_{\Lambda}(T_k,-)$ (respectively, $\T_i^-:=-\otimes_{\Gamma} T_k$) as follows.
 
\begin{prop}
Let $\Lambda = KQ/(R)$ be a finite dimensional algebra and $k\in Q_0$ be a sink admitting the $n$-APR tilting module $T_k$. We denote by 
$(Q',R'):= \sigma_k(Q,R)$ and $\Gamma := KQ'/(R')$.
Then we have ${\Q}_k^+\circ F \cong F'\circ{\T}_k^+$ on $\FF_0(T_k)$ and ${\Q}_k^-\circ F' \cong F\circ{\T}_k^-$ on  $\XX_0(T_k)$,
where $F$ and $F'$ are the category equivalences, respectively.
\[
\xymatrix@C50pt@R40pt{ 
\mod KQ/(R) \ar[r]^{{\T}_k^+} \ar[d]^\cong_{F}  & \mod KQ'/(R')  \ar[d]^{\cong}_{F'} \ar[r]^{{\T}_k^-} & \mod KQ/(R)   \ar[d]^\cong_{F} \\
\mathrm{rep}_K(Q,R)\ar[r]^{{\Q}_k^+ }  & \mathrm{rep}_K(Q',R')  \ar[r]^{{\Q}_k^- } &  \mathrm{rep}_K(Q,R). }
\]
\end{prop}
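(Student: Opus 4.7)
The plan is to verify the stated isomorphism pointwise (at every vertex) and then check that the structure maps (both for arrows and for morphisms) are compatible, reducing everything to the defining projective resolution (\ref{exact seq}) of $\tau_n^- P_k$. I will focus on ${\Q}_k^+\circ F\cong F'\circ{\T}_k^+$; the statement about ${\Q}_k^-$ is entirely dual, using the injective resolution of $\tau_nI_k$ instead.

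First I would check the underlying vector spaces. For $X\in\FF_0(T_k)$ and $i\neq k$, one has $e_i'\Gamma\cong\Hom_\Lambda(T_k,e_i\Lambda)$ by the definition of $\Gamma$, so
\[
(F'(\T_k^+X))_i=\Hom_\Gamma(e_i'\Gamma,\T_k^+X)\cong\Hom_\Lambda(e_i\Lambda,X)=Xe_i=(\Q_k^+(FX))_i,
\]
matching the definition of $\Q_k^+$ on non-sink vertices. For $i=k$, $e_k'\Gamma\cong\Hom_\Lambda(T_k,\tau_n^-P_k)$, so $(F'(\T_k^+X))_k\cong\Hom_\Lambda(\tau_n^-P_k,X)$. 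Applying $\Hom_\Lambda(-,X)$ to (\ref{exact seq}) gives the left exact sequence
\[
0\to\Hom_\Lambda(\tau_n^-P_k,X)\to\bigoplus_{c\in C}Xe_{j_c}\xrightarrow{(\varphi_{a_{cb}})_{cb}}\bigoplus_{b\in B}Xe_{i_b},
\]
identifying $\Hom_\Lambda(\tau_n^-P_k,X)$ with exactly the kernel used in Definition \ref{n-ref functor} to define $(\Q_k^+(FX))_k$. Here the vanishing of higher $\Ext$ terms arises automatically because $X\in\FF_0(T_k)$ (so in particular $\Ext^{\geq 1}_\Lambda(T_k,X)=0$), which is what lets us truncate the derived computation to this short sequence.

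Next I would verify compatibility of the arrow actions. For $a\colon i\to j$ in $Q_1$ with $j\neq k$ (hence $i\neq k$ since $k$ is a sink), both sides produce the same $K$-linear map $\varphi_a\colon Xe_i\to Xe_j$, coming from right multiplication by $a$; this is trivial. For a new arrow $a_c^*\colon k\to j_c$, recall from the proof of Proposition \ref{mutation} that under the identification $\Gamma\cong KQ'/(R')$ the element $a_c^*$ corresponds to the $c$-th component $\beta_c\colon P_{j_c}\hookrightarrow P^n\xrightarrow{\beta}\tau_n^-P_k$. Consequently the $\varphi_{a_c^*}$ in $F'(\T_k^+X)$ sends $f\in\Hom_\Lambda(\tau_n^-P_k,X)$ to $f\beta_c\in Xe_{j_c}$, which under the embedding into $\bigoplus_c Xe_{j_c}$ is exactly the projection to the $c$-th summand — precisely the definition of $\varphi_{a_c^*}$ in $\Q_k^+(FX)$. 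Naturality in $X$ then follows at non-sink vertices trivially, and at vertex $k$ from the universal property of kernels applied to the commutative diagram in Definition \ref{n-ref functor}.

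The dual direction ${\Q}_k^-\circ F'\cong F\circ{\T}_k^-$ on $\XX_0(T_k)$ is obtained by the analogous argument after replacing $\Hom_\Lambda(T_k,-)$ with $-\otimes_\Gamma T_k$, using that for $Y\in\XX_0(T_k)$ the higher $\Tor$ terms vanish, so that $Y\otimes_\Gamma-$ applied to the projective resolution of $\tau_n I_k$ over $\Gamma$ (equivalently, the injective coresolution in Definition \ref{n-ref functor}(II)) truncates to a right exact sequence exhibiting $(\T_k^- Y)e_k$ as the required cokernel. The main difficulty I anticipate is the bookkeeping in the arrow-compatibility step — correctly identifying the $\Gamma$-module structure of $\Hom_\Lambda(\tau_n^-P_k,X)$ with the representation-theoretic kernel — but this is a direct application of Proposition \ref{mutation} together with the functoriality of $\Hom_\Lambda(-,X)$ applied to the sequence (\ref{exact seq}).
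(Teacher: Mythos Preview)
Your proposal is correct and follows essentially the same approach as the paper's proof: identify the underlying vector spaces at each vertex via the tilting equivalence (for $i\neq k$) and via the left-exact sequence obtained by applying $\Hom_\Lambda(-,X)$ to (\ref{exact seq}) (for $i=k$), then verify compatibility with the structure maps. You are in fact more explicit than the paper on the arrow-compatibility step, which the paper dismisses with the phrase ``one can easily check.''
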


\begin{proof}
We only prove that $ {\Q}_k^+\circ F \cong F'\circ{\T}_k^+ $; the proof of the second statement is similar.
Let $X$ be a $\Lambda$-module with $X\in \FF_0(T_k)$. Take a vertex $i\in Q_0$ with $i \neq k$.
By Lemma \ref{tilt theorem}, we have
\begin{eqnarray*}
(F'\Hom_\Lambda(T_k,X))_i&\cong& (\Hom_\Lambda(T_k,X))\epsilon_i\\
&\cong& \Hom_{\Gamma}({\epsilon_i}\Gamma , \Hom_\Lambda(T_k,X))\\
&\cong&\Hom_{\Gamma}(\Hom_\Lambda(T_k,e_i\Lambda) , \Hom_\Lambda(T_k,X))\\
&\cong&\Hom_{\Lambda}(e_i \Lambda , X)\\
&\cong& X{e_i}\\
&\cong&(FX)_i\\
&\cong&({\Q}_k^+(FX))_i .
\end{eqnarray*}

On the other hand, if $i = k$, we have
\begin{eqnarray*}
(F'\Hom_\Lambda(T_k,X))_k&\cong& (\Hom_\Lambda(T_k,X))\epsilon_k\\
&\cong&  \Hom_{\Gamma}({\epsilon_k}\Gamma , \Hom_\Lambda(T_k,X))\\
&\cong&\Hom_{\Gamma}(\Hom_\Lambda(T_k, \tau_n^-P_k) , \Hom_\Lambda(T_k,X))\\
&\cong&\Hom_{\Lambda}(\tau_n^-P_k , X).
\end{eqnarray*}
 
By applying $\Hom_\Lambda(-,X)$ to the minimal projective resolution (\ref{exact seq}), 
we have the following commutative diagram

\[\xymatrix@C35pt@R30pt{
0 \ar[r]  & \Hom_\Lambda (\tau_n^{-}P_k , X) \ar[r] \ar@{.>}[d] &\ar[r]^{} \ar[d]^\cong  \Hom_\Lambda
({\displaystyle\bigoplus_{\begin{smallmatrix}c\in C\end{smallmatrix}}} P_{j_c} , X)   &\Hom_\Lambda ({\displaystyle\bigoplus_{\begin{smallmatrix}b\in B\end{smallmatrix}}} P_{i_b} , X) \ar[d]^\cong \\
 0 \ar[r] & ({\Q}_k^+(FX))_k \ar[r]  & {\displaystyle\bigoplus_{\begin{smallmatrix}c\in C\end{smallmatrix}}} (FX)_{j_c}  \ar[r]^{(\varphi_{a_{cb}})_{a_{cb}}} & {\displaystyle\bigoplus_{\begin{smallmatrix}b\in B\end{smallmatrix}}} (FX)_{i_b}  }
\]
Hence we have $({\Q}_k^+ (FX))_k \cong (F'\Hom_\Lambda(T_k,X))_k$ as a $K$-vector space.
Then one can easily check that it induces an isomorphism of representations ${\Q}_k^+ F(X) \cong F'{\T}_k^+(X)$ and the isomorphism is functorial. Thus we have
${\Q}_k^+\circ F \cong F'\circ{\T}_k^+$.
\end{proof}


\begin{thebibliography}{20}
\bibitem[ASS]{ASS} I. Assem, D. Simson, A. Skowro\'nski, \textit{Elements of the
Representation Theory of Associative Algebras. Vol. 1}, London Mathematical Society
Student Texts 65, Cambridge university press (2006).

\bibitem[APR]{APR} M. Auslander, M. I. Platzeck, I. Reiten,
\textit{Coxeter functors without diagrams}, Trans. Amer. Math. Soc. 250 (1979), 1--46.

\bibitem[ARS]{ARS} M. Auslander, I. Reiten, S. O. Smal{\o}, \textit{Representation Theory of Artin Algebras}, Cambridge Studies in Advanced Mathematics, 36. Cambridge University Press, Cambridge, 1995.

\bibitem[BGP]{BGP}
I. N. Bernstein, I. M. Gelfand, V. A. Ponomarev,
\textit{Coxeter functors and Gabriel's theorem}, Uspehi Mat. Nauk 28 (1973), no. 2(170), 19--33.

\bibitem[B1]{B1} K. Bongartz, \textit{Tilted algebras}, Representations of algebras (Puebla, 1980), 26--38, Lecture Notes in Math., 903, Springer, Berlin-New York, 1981.

\bibitem[B2]{B2} K. Bongartz, \textit{Algebras and quadratic forms}, J. London Math. Soc. (2) 28 (1983), no. 3, 461--469.



\bibitem[BIRS]{BIRS} A. B. Buan, O. Iyama, I. Reiten, D. Smith, \textit{Mutation of cluster-tilting objects and potentials}, Amer. J. Math. 133 (2011), no. 4, 835--887.


\bibitem[BB]{BB} S. Brenner, M. C. R. Butler,
\emph{Generalisations of the Bernstein-Gelfand-Ponomarev reflection functors},
in Proc. ICRA II (Ottawa,1979), Lecture Notes in Math. No. 832, Springer-Verlag, Berlin, Heidelberg,
New York, 1980, pp. 103--69.


\bibitem[DR]{DR}V. Dlab, C. M. Ringel, \textit{Indecomposable representations of graphs and algebras}, Mem. Amer. Math. Soc. 6 (1976), no. 173.

\bibitem[DF]{DF} P. Donovan, M. R. Freislich, \textit{The representation theory of finite graphs and associated algebras}, Carleton Mathematical Lecture Notes, No. 5. Carleton University, Ottawa, Ont., 1973.

\bibitem[D]{D} P. Dr{\"a}xler, \textit{On indecomposable modules over directed algebras}, Proc. Amer. Math. Soc. 112 (1991), no. 2, 321--327.

\bibitem[EH]{EH} K. Erdmann, T. Holm, \textit{Maximal $n$-orthogonal modules for selfinjective algebras}, Proc. Amer. Math. Soc. 136 (2008) no. 9, 3069--3078.

\bibitem[G]{G} P. Gabriel,  \textit{Unzerlegbare Darstellungen I},
Manu. Math. 6 (1972), 71--103.


\bibitem[H]{H} D. Happel, \textit{Triangulated categories in the representation theory of finite-dimensional algebras}, London Mathematical Society Lecture Note Series, 119. Cambridge University Press, Cambridge, 1988.

\bibitem[HR1]{HR1}D. Happel, C. M. Ringel, \textit{Tilted algebras}, Trans. Amer. Math. Soc. 274 (1982), no. 2, 399--443.

\bibitem[HR2]{HR2}D. Happel, C. M. Ringel, \textit{Directing projective modules}, Arch. Math. (Basel) 60 (1993), no. 3, 237--246.

\bibitem[HI1]{HI1} M. Herschend, O. Iyama, \textit{n-representation-finite algebras and twisted fractionally Calabi-Yau algebras}, Bull. Lond. Math. Soc. 43 (2011), no. 3, 449--466.

\bibitem[HI2]{HI2} M. Herschend, O. Iyama, \textit{Selfinjective quivers with potential and 2-representation-finite algebras}, Compos. Math. 147 (2011), no. 6, 1885--1920.
 
\bibitem[HIO]{HIO} M. Herschend, O. Iyama, S. Oppermann, \textit{n-representation infinite algebras}, arXiv:1205.1272.

\bibitem[HZ]{HZ} Z. Huang, X. Zhang, \textit{The existence of maximal $n$-orthogonal subcategories}, J. Algebra 321 (2009), no. 10, 2829--2842.

\bibitem[I1]{I1} O. Iyama, \textit{Higher-dimensional Auslander-Reiten theory on maximal orthogonal subcategories}, Adv. Math. 210 (2007), no. 1, 22--50.

\bibitem[I2]{I2} O. Iyama,  \textit{Auslander correspondence}, Adv. Math. 210 (2007), no. 1, 51--82.

\bibitem[I3]{I3} O. Iyama,  \textit{Cluster tilting for higher Auslander algebras}, Adv. Math. 226 (2011), no. 1, 1--61.

\bibitem[IO1]{IO1} O. Iyama, S. Oppermann, \textit{$n$-representation-finite algebras and $n$-APR tilting}, Trans. Amer. Math. Soc. 363 (2011), no. 12, 6575--6614.

\bibitem[IO2]{IO2} O. Iyama, S. Oppermann, \textit{Stable categories of higher preprojective algebras}, Adv. Math. 244 (2013), 23--68.

\bibitem[M]{M} Y. Miyashita, \textit{Tilting modules of finite projective dimension}, Math. Z. 193 (1986),
no. 1, 113--146.

\bibitem[N]{N} L. A. Nazarova, \textit{Representations of quivers of infinite type}, Izv. Akad. Nauk SSSR Ser. Mat. 37 (1973), 752--791.

\bibitem[Ka]{Ka} V. G. Kac, \textit{Infinite root systems, representations of graphs and invariant theory}, Invent. Math. 56 (1980), no. 1, 57--92.

\bibitem[Ke1]{Ke1}  B. Keller, \textit{Cluster algebras, quiver representations and triangulated categories}, Triangulated categories, 76--160, London Math. Soc. Lecture Note Ser., 375, Cambridge Univ. Press, Cambridge, 2010.

\bibitem[Ke2]{Ke2} B. Keller, \textit{On cluster theory and quantum dilogarithm identities},
Representations of Algebras and Related Topics, EMS Series of Congress Reports. pp. 85--116.


\bibitem[Re]{Re} I. Reiten, \textit{Cluster categories}, Proceedings of the International Congress of Mathematicians. Volume I, 558--594, Hindustan Book Agency, New Delhi, 2010.

\bibitem[Ric]{Ric} J. Rickard, \textit{Morita theory for derived categories}, J. London Math. Soc. (2) 39 (1989), no. 3, 436--456.

\bibitem[Rin1]{Rin1} C. M. Ringel, \textit{Tame algebras and integral quadratic forms}, Lecture Notes in Mathematics, 1099. Springer-Verlag, Berlin, 1984.


\bibitem[Rin2]{Rin2} C. M. Ringel, \textit{The self-injective cluster-tilted algebras}, Arch. Math. (Basel) 91 (2008), no. 3, 218--225.
\end{thebibliography}
\end{document}